\newcommand{\mylabel}[2]{#2\def\@currentlabel{#2}\label{#1}}
\newcommand{\bsm}{\left(\begin{smallmatrix}}
\newcommand{\esm}{\end{smallmatrix}\right)}
\newenvironment{customthm}[1]
  {\innercustomthm}
  {\endinnercustomthm}
\newtheorem{theorem}{Theorem}[section]
\newtheorem{corollary}[theorem]{Corollary}
\newtheorem{lemma}[theorem]{Lemma}
\newtheorem{proposition}[theorem]{Proposition}
\theoremstyle{definition}
\newtheorem{definition}[theorem]{Definition}
\newtheorem{example}[theorem]{Example}
\newtheorem{remark}[theorem]{Remark}
\newtheorem{notation}[theorem]{Notation}
\newtheorem{construction}[theorem]{Construction}
\newtheorem{claim}{Claim}
\newtheorem*{claim*}{Claim}
\newcommand{\sm}{\setminus}
\newcommand{\im}{\operatorname{im}}
\newcommand{\coker}{\operatorname{coker}}
\newcommand{\aug}{\operatorname{aug}}
\newcommand{\proj}{\operatorname{proj}}
\newcommand{\Tor}{\operatorname{Tor}}
\newcommand{\PD}{\operatorname{PD}}
\newcommand{\ev}{\operatorname{ev}}
\newcommand{\sn}{\operatorname{sn}}
\newcommand{\tr}{\operatorname{tr}}
 \definecolor{bettergreen}{rgb}{0.0, 0.5 0.0}
\newcommand{\Z}{\mathbb{Z}}
\newcommand{\Q}{\mathbb{Q}}
\newcommand{\C}{\mathbb{C}}
\newcommand{\Hom}{\operatorname{Hom}}
\newcommand{\Arf}{\operatorname{Arf}}
\newcommand{\ks}{\operatorname{ks}}
\newcommand{\id}{\operatorname{id}}
\newcommand{\fr}{\operatorname{fr}}
\title{Simply slicing knots}
\author[A.~Conway]{Anthony Conway}
\address{The University of Texas at Austin, Austin TX,  United States}
\email{anthony.conway@austin.utexas.edu}
\author[P.~Orson]{Patrick Orson}
\address{California Polytechnic State University, San Luis Obispo, CA, United States}
\email{patrickorson@gmail.com}
\author[M.~Pencovitch]{Mark Pencovitch}
\address{School of Mathematics and Statistics, University of Glasgow, United Kingdom}
\email{m.pencovitch.1@research.gla.ac.uk}
\begin{document}
\begin{abstract}
Given a simply-connected $4$-manifold with boundary the $3$-sphere, this paper establishes
sufficient conditions for a knot in the boundary to be sliced by a locally flat disc in the~$4$-manifold, whose complement has finite cyclic fundamental group.
In addition, necessary and sufficient conditions are described to ensure that such discs exist stably, that is after taking the connected sum of the~$4$-manifold with copies of~$S^2 \times S^2$.
\end{abstract}

\maketitle

\section{Introduction}

Given a~$4$-manifold~$N$ with boundary~$S^3$, one can ask which knots~$K\subset S^3$ bound locally flat slice discs~$D\subset N$ with a given disc group~$\pi_1(N\setminus D)$. 
When the disc group is abelian, we call~$D$ a \emph{simple} slice disc. 
Freedman proved that a knot~$K \subset S^3$ bounds a locally flat slice disc in~$D^4$ with disc group~$\Z$ if and only if it has Alexander polynomial one~\cite{Freedman,FreedmanQuinn}.
More generally, a knot~$K\subset  S^3$ admits such a slice disc in a simply-connected, compact~$4$-manifold~$N$ if and only if its Blanchfield form is presented by a size~$b_2(N)$ nondegenerate hermitian matrix~$A(t)$ such that~$A(1)$ represents the intersection form of~$N$~\cite{ConwayPiccirilloPowell}. 
We call a knot \emph{$G$-slice} in $N$ if it bounds a locally flat slice disc with disc group isomorphic to~$G$. In this article, we investigate which knots are~$\Z_d$-slice in simply-connected~$4$-manifolds, for~$d\neq 0$. 

The criteria we obtain to guarantee a simple slice disc exists will depend on the relative homology class of the hypothesised disc, so this article could also be thought as asking when a given nonzero class~$x \in H_2(N,\partial N)$ of divisibility~$d$ is represented by a simple slice disc (which will necessarily be a~$\Z_d$-disc) with boundary a given~$K$.
Our main results provide sufficient conditions for this to occur (Theorem~\ref{thm:Main}) as well as a necessary and sufficient conditions for this to occur stably, i.e.\ for there to exist an integer~$k\geq 0$ so that~$K$ bounds a disc~$D \subset N\#^k(S^2\times S^2)$ representing the class~$x \oplus 0 \in H_2(N\#^k(S^2\times S^2))$ (Theorem~\ref{thm:StableWithBoundaryIntro}).
In the case that $d$ is a prime power,  and provided a disc exists for some $k$, we give a formula computing the smallest such $k$ in terms of the algebraic topology of $N$ and the Levine-Tristram signature of $K$ (Corollary~\ref{cor:StabilisingNUmber}). 

Results of the type we obtain in this article, but realising given \emph{absolute} homology classes by \emph{closed} submanifolds, were proved in a series of papers by Lee and Wilczy\'nski~\cite{LWCommentarii,LWKtheory,LWGenus}. Our method of proof has a similar structural approach, and uses their algebraic splitting theorem~\cite[Theorem 6.1]{LWKtheory} as a crucial mechanism to guarantee the slice discs described above in the stable manifold~$N\#^k(S^2\times S^2)$ survive the process of destabilising back to~$N$.

\subsection{Simple slice discs}

Given a $4$-manifold $N$ with $\partial N\cong S^3$, and $x\in H_2(N,\partial N)$, we denote by $x \cdot x$ the algebraic self-intersection number of the unique class in~$H_2(N)$ that maps to $x$.

\medskip

Our first main result provides sufficient conditions for a knot to bound a simple disc representing a given homology class.

\begin{theorem}
\label{thm:Main}
Let \(N\) be a compact, oriented, simply-connected $4$-manifold with boundary \(S^3\). Let $x \in H_2(N, \partial N)$ be a nonzero class of divisibility $d$, and 
suppose \(K\subset S^3\) is a knot such that~\(H_1(\Sigma_d(K))=0\). 
The following are equivalent:
\begin{itemize}
\item The knot $K$ is sliced by a simple disc in $N$ representing $x$.
\item
\begin{enumerate}
\item \label{item:1Main} $\operatorname{Arf}(K)+\ks(N)+\frac{1}{8}(\sigma(N)-x \cdot x) \equiv 0 \ \text{ mod } 2$ \ if $x$ is characteristic, and
\item\label{item:2Main}  $b_2(N)\geq \max_{0\leq j<d}|\sigma (N)-\frac{2j(d-j)}{d^2}\, x\cdot x +\sigma_K (e^{\frac{2\pi i j}{d}})|$.
\end{enumerate}
\end{itemize}
\end{theorem}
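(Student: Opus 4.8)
The plan is to prove the two implications by separate mechanisms. The numerical constraints are extracted from a hypothetical disc using the equivariant signatures of an associated cyclic branched cover, while the reverse implication is obtained by first realising a disc after stabilisation and then destabilising by means of the splitting theorem of Lee and Wilczy\'nski.

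For the necessity direction, suppose $K$ is sliced by a simple disc $D\subset N$ representing $x$. Since $H_1(\Sigma_d(K))=0$ and $x$ has divisibility $d$, the exterior $N\setminus \nu D$ has first homology $\Z_d$, so the disc group is forced to be $\Z_d$, and the associated $d$-fold cyclic cover caps off to a branched cover $\Sigma_d(N,D)$ with boundary the homology sphere $\Sigma_d(K)$. This carries a $\Z_d$-action whose nondegenerate equivariant intersection form splits over $\C$ into eigenspaces indexed by $0\leq j<d$. I would then compute the signature of the $e^{2\pi i j/d}$-eigenspace; in the topological category the cleanest route is to express it via the Levine--Tristram signature of $K$ in a Casson--Gordon/Viro-type calculation (equivalently through the $G$-signature theorem with fixed-point set $D$, whose normal data is governed by $x\cdot x$), obtaining exactly
\[
\sigma(N)-\tfrac{2j(d-j)}{d^2}\, x\cdot x + \sigma_K\!\left(e^{2\pi i j/d}\right).
\]
A homological computation shows each eigenspace has complex dimension at most $b_2(N)$, and since the signature of a Hermitian form is bounded by its rank, maximising over $j$ gives condition~\ref{item:2Main}. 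For condition~\ref{item:1Main}, when $x$ is characteristic the disc $D$ is a characteristic surface and the relative Freedman--Kirby/Guillou--Marin congruence, with its Kirby--Siebenmann correction term in the topological category, relates $\sigma(N)$, $x\cdot x$ and $\Arf(K)$; this is precisely the stated identity modulo $2$.

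For sufficiency, assume conditions~\ref{item:1Main} and \ref{item:2Main}. First I would verify that they imply the hypotheses of Theorem~\ref{thm:StableWithBoundaryIntro}: these are invariant under adding hyperbolic $S^2\times S^2$ summands and amount to the congruence~\ref{item:1Main} together with the realisability of the Blanchfield form by a suitable nondegenerate Hermitian matrix with prescribed signatures, in the spirit of~\cite{ConwayPiccirilloPowell}. This produces a simple disc in $N\#^k(S^2\times S^2)$ representing $x\oplus 0$ for some $k\geq 0$. I would then apply the algebraic splitting theorem~\cite[Theorem 6.1]{LWKtheory} to split $k$ hyperbolic summands off the equivariant intersection form, and realise the destabilised algebraic data by a genuine locally flat disc in $N$ via Freedman--Quinn surgery. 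The content of condition~\ref{item:2Main} is that $b_2(N)$ is large enough to accommodate the eigenspace signatures computed above without the extra stabilising hyperbolics, which is exactly what allows the splitting to descend all the way down to $N$.

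The step I expect to be the main obstacle is this final destabilisation, where the purely algebraic splitting must be matched with the geometry of disc exteriors in the unstabilised manifold: one must ensure that the summands split off genuinely correspond to removing $S^2\times S^2$ connected summands and that the surviving disc retains cyclic disc group. Condition~\ref{item:2Main} is the precise threshold making this possible---below it the eigenspace signatures cannot be accommodated and genuine stabilisation is unavoidable, a phenomenon quantified for prime-power $d$ by Corollary~\ref{cor:StabilisingNUmber}.
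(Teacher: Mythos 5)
Your overall architecture coincides with the paper's: necessity via the eigenspace signatures of the $d$-fold branched cover (the Rohlin--Viro computation of Lemma~\ref{lem:PrelimLT} and Proposition~\ref{prop:SignatureCondition}) together with a relative Freedman--Kirby congruence, and sufficiency via stable existence (Theorem~\ref{thm:StableWithBoundaryIntro}) followed by a Lee--Wilczy\'nski splitting and a Freedman--Quinn realisation. The gap is in the sufficiency direction: you invoke the splitting theorem as if condition~\eqref{item:2Main} were its only hypothesis. In fact Theorem~\ref{thm:splitting} requires, besides the $j$-signature bound and the fixed-point condition on $z$: (i) that $H_2(\Sigma_d(D))$ be a projective (in fact free) $\Z[\Z_d]$-module of rank $b_2(N)+2k$; (ii) that the induced integral pointed form split off $H(\Z)^k$ compatibly with the distinguished element; and (iii) that the equivariant form be weakly even on the kernel of the composite $H_2(\Sigma_d(D))\to H_2(N)$. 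None of these is automatic, and their verification is the core technical content of the argument (Propositions~\ref{prop:H2Free}, \ref{prop:SplitDownstairs} and~\ref{prop:EvenessCondition}, occupying Sections~\ref{sec:H2Free}--\ref{sec:EvenessCondition}): freeness needs a K-theoretic argument via the Rim square together with Jacobinski's theorem that stably free $\Z[\Z_d]$-modules are free; the integral splitting needs the transfer identity $\sum_{g} Q_{\Sigma_d(D)}(y,gy')=Q_{N_k}(p_*y,p_*y')$; and the evenness condition for even $d$ needs a relative version of an Edmonds--Bredon fixed-point theorem. Your proposal gives no indication of how these would be established, and without them the splitting theorem simply cannot be applied; condition~\eqref{item:2Main} alone does not suffice.

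Two smaller points. First, your description of the hypotheses of Theorem~\ref{thm:StableWithBoundaryIntro} is off: stable sliceability in the class $x$ requires only that $x$ be ordinary or that the Arf congruence~\eqref{item:1Main} hold; no realisability of the Blanchfield form by a Hermitian matrix enters (that mechanism belongs to the nullhomologous, $\pi_1\cong\Z$ setting of~\cite{ConwayPiccirilloPowell} and plays no role here). This does not derail your argument, since the conclusion you extract from Theorem~\ref{thm:StableWithBoundaryIntro} is correct. Second, the step you single out as the main obstacle---matching the algebraic splitting with the geometry of destabilisation---is comparatively routine once the splitting is in hand: since the disc is simple, the $d$-fold cover of its exterior is the universal cover, the split-off hyperbolic summands are shown to lie in the image of its second homology and hence in $\pi_2$ of the exterior, they are represented by framed embedded spheres via Freedman's sphere embedding theorem for the good group $\Z_d$, and surgery on these spheres followed by the Freedman--Quinn classification identifies the surgered manifold with $N$ rel boundary, carrying the disc along. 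The genuinely hard part is earlier, in checking the algebraic hypotheses you passed over.
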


Here, ~$\ks(N)$ and~$\sigma(N)$ respectively denote the Kirby-Siebenmann invariant and signature of~$N$,  whereas~$\sigma_K \colon S^1 \to \Z$ is the Levine-Tristram signature of~$K$.
For~$x$ characteristic,~$\sigma(N)-x \cdot x$ is divisible by~$8$ because the intersection form of $N$ is nonsingular; see e.g.~\cite[Proposition~1.2.20]{GompfStipsicz}.
A class $x \in H_2(N,\partial N)$ is \emph{characteristic} if $x \cdot a=a \cdot a$ mod $2$ for every $a \in H_2(N)$ and \emph{ordinary} otherwise.

\begin{remark}
The case $x=0$ is excluded from Theorem~\ref{thm:Main}, but we note that, since simple discs in $N$ have group $\Z$ if and only if they are nullhomologous (see e.g. \cite[Lemma~5.1]{ConwayPowell}),
the results of~\cite{ConwayPiccirilloPowell} already provide necessary and sufficient algebraic conditions for the existence of a simple slice disc.
\end{remark}

Over the past couple of years, a lot of attention has gone into investigating when a knot bounds a nullhomologous slice disc (i.e.\ when a knot is \emph{$H$-slice}) in a given $4$-manifold~\cite{ConwayNagel, ManolescuMarengonSarkarWillis,
ManolescuMarengonPiccirillo,
IidaMukherjeeTaniguchi,ManolescuPiccirillo,KjuchukovaMillerRaySakalli}. More recently, there has also been some interest in the case of nonzero classes~\cite{Ren,Qin}.
Most of these articles focus on smooth obstructions,  so Theorem~\ref{thm:Main} can be thought of as a topological counterpart to these recent results.

\medskip

\noindent We now highlight some examples and consequences of Theorem~\ref{thm:Main} for small values of $d$.

\subsubsection*{The case $d=1$}

In the case $d=1$ of Theorem~\ref{thm:Main}, or equivalently the case that $x$ is primitive, the hypotheses simplify considerably. The condition $H_1(\Sigma_d(K))=H_1(S^3)=0$ holds vacuously, as does the second condition, since it reduces to the inequality~$b_2(N) \geq |\sigma(N)|$.

\begin{corollary}
\label{cor:SimpleDisc}
Let \(N\) be a compact, oriented, simply-connected $4$-manifold with boundary~\(S^3\),  let~$x \in H_2(N, \partial N)$ be a nonzero primitive class, and let~\(K\subset S^3\) be a knot.
\begin{itemize}
\item For $x$ ordinary,  $K$ is sliced by a simple disc in $N$ representing $x$.
\item For $x$ characteristic,  $K$ is sliced by a simple disc in $N$ representing $x$ if and only if 
$$\operatorname{Arf}(K)+\ks(N)+\frac{1}{8}(\sigma(N)-x \cdot x) \equiv 0 \ \text{ mod } 2.$$
\end{itemize}
\end{corollary}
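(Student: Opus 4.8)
The plan is to deduce this corollary as the special case $d = 1$ of Theorem~\ref{thm:Main}, since a nonzero class $x$ is primitive precisely when its divisibility is $d = 1$. The first thing I would check is that the hypothesis $H_1(\Sigma_d(K)) = 0$ becomes vacuous: the $1$-fold cyclic branched cover $\Sigma_1(K)$ is just $S^3$, so $H_1(\Sigma_1(K)) = H_1(S^3) = 0$ holds for every knot $K$. Thus Theorem~\ref{thm:Main} applies to an arbitrary $K$ with no constraint arising from the branched cover.

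The key step is to show that condition~(\ref{item:2Main}) of Theorem~\ref{thm:Main} holds automatically once $d = 1$. When $d = 1$ the maximum is taken over the single index $j = 0$; for this value the coefficient $\frac{2j(d-j)}{d^2}$ vanishes, and the Levine-Tristram signature evaluates to $\sigma_K(e^{0}) = \sigma_K(1) = 0$. The right-hand side therefore collapses to $|\sigma(N)|$, and condition~(\ref{item:2Main}) becomes the inequality $b_2(N) \geq |\sigma(N)|$. This is always true: writing $\sigma(N) = b_2^+(N) - b_2^-(N)$ and $b_2(N) = b_2^+(N) + b_2^-(N)$ gives $|\sigma(N)| \leq b_2(N)$ immediately.

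It then remains to split according to the type of $x$. If $x$ is ordinary, condition~(\ref{item:1Main}) is not imposed and condition~(\ref{item:2Main}) holds by the previous paragraph, so Theorem~\ref{thm:Main} produces a simple disc representing $x$ for every knot $K$. If $x$ is characteristic, condition~(\ref{item:2Main}) is again automatic, so the equivalence in Theorem~\ref{thm:Main} reduces to the single congruence $\Arf(K) + \ks(N) + \frac{1}{8}(\sigma(N) - x \cdot x) \equiv 0 \bmod 2$, which is exactly the stated criterion. I do not expect any genuine obstacle here: the corollary is a direct reduction of the hypotheses of Theorem~\ref{thm:Main}, and the only non-formal input is the elementary bound $|\sigma(N)| \leq b_2(N)$. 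All of the real content is carried by Theorem~\ref{thm:Main} itself.
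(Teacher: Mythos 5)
Your proposal is correct and follows exactly the paper's own route: specialise Theorem~\ref{thm:Main} to $d=1$, note that $H_1(\Sigma_1(K))=H_1(S^3)=0$ holds vacuously, and observe that condition~\eqref{item:2Main} collapses to the always-true inequality $b_2(N)\geq|\sigma(N)|$ since the only index is $j=0$ and $\sigma_K(1)=0$. Nothing is missing.
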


This corollary recovers~\cite[Corollary 1.15]{KasprowskiPowellRayTeichnerEmbedding}, which, given a closed simply-connected $4$-manifold $X$, concerns sliceness in $X^\circ:=X\setminus  \operatorname{Int}(B^4)$.
\begin{corollary}
\label{cor:KasprowskiPowellRayTeichnerEmbedding}
For~$X\neq S^4,\C P^2,*\C P^2$ simply-connected, every knot $K\subset \partial X^\circ$ is slice in~$X^\circ$.
\end{corollary}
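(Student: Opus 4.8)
The plan is to deduce the statement from Corollary~\ref{cor:SimpleDisc}. I will produce a single nonzero primitive \emph{ordinary} class $x \in H_2(X^\circ, \partial X^\circ)$; the ordinary case of Corollary~\ref{cor:SimpleDisc} then yields, for \emph{every} knot $K \subset \partial X^\circ = S^3$, a simple (hence locally flat) slice disc in $X^\circ$ representing $x$. Thus the entire corollary reduces to the purely algebraic assertion that the intersection form of $X^\circ$ admits a nonzero primitive ordinary vector whenever $X$ is not one of the excluded manifolds.

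First I would carry out the homological bookkeeping. Since $\partial X^\circ = S^3$ is a homology sphere, the long exact sequence of $(X^\circ, \partial X^\circ)$ gives an isomorphism $H_2(X^\circ) \xrightarrow{\ \cong\ } H_2(X^\circ, \partial X^\circ)$, and excising the removed ball identifies both groups with $H_2(X)$ carrying the unimodular intersection form $Q_X$ of rank $b_2(X)$; under this identification the pairing $x\cdot x$ and the notion of a characteristic class agree with those of $Q_X$. By Freedman's classification, the simply-connected closed $4$-manifolds with $b_2 \leq 1$ are exactly $S^4$ and the four rank-one manifolds with form $\langle \pm 1\rangle$, which up to orientation reversal are $\C P^2$ and $*\C P^2$; since the property ``every knot is slice in $X^\circ$'' is insensitive to reversing the orientation of $X$ (this mirrors $K$, and mirroring permutes the set of all knots), the hypothesis $X \neq S^4, \C P^2, *\C P^2$ is equivalent to $b_2(X) \geq 2$.

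It therefore remains to prove that every unimodular symmetric bilinear form over $\Z$ of rank $n \geq 2$ has a nonzero primitive ordinary vector. To see this, reduce mod $2$: by nonsingularity of the reduced form together with Wu's formula there is a unique class $\bar w \in H_2(X;\F_2)$ with $\bar w \cdot \bar a = \bar a \cdot \bar a$ for all $a$, and an integral vector is characteristic precisely when it reduces to $\bar w$; so I need a primitive integral vector reducing to some $\bar v \neq \bar w$. Two elementary observations finish the argument: (i) every nonzero $\bar v \in H_2(X;\F_2)$ admits a primitive integral lift, obtained by choosing a lift having some coordinate equal to $1$; and (ii) since $|H_2(X;\F_2)| = 2^n \geq 4$, there is a nonzero class $\bar v \neq \bar w$. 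Lifting such a $\bar v$ to a primitive $x$ produces the required nonzero primitive ordinary vector, completing the reduction.

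The homological identifications and the lifting argument are routine, and the mod~$2$ counting is immediate. The one step deserving genuine care, and the place I expect to spend the most effort, is matching the excluded manifolds with the small-rank cases: checking via Freedman--Quinn that $b_2 = 1$ forces $X$ to be one of $\C P^2, *\C P^2$ up to orientation, and verifying carefully that the sliceness question is unchanged under orientation reversal so that the three names in the statement genuinely account for all of $S^4$ and the rank-one manifolds. It is also worth confirming separately that for $b_2 \leq 1$ no ordinary primitive class exists (the only primitive classes of $\langle \pm 1\rangle$ are characteristic, and $S^4$ has no nonzero class at all), which shows that these exclusions are truly necessary for the present method.
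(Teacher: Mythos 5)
Your proposal is correct, and it follows the paper's overall strategy---reduce to Corollary~\ref{cor:SimpleDisc} by exhibiting a nonzero primitive ordinary class in $H_2(X^\circ)\cong H_2(X^\circ,\partial X^\circ)\cong H_2(X)$---but the way you produce that class is genuinely different. The paper splits into cases according to whether $X$ is spin: in the spin case every nonzero primitive class is automatically ordinary (in an even unimodular form the characteristic vectors are exactly the divisible-by-$2$ ones), while in the nonspin case it invokes Freedman to get $b_2(X)>1$, takes a rank-two direct summand $\langle x,y\rangle \subset H_2(X^\circ)$, and observes that at least one of the primitive classes $x$, $y$, $x+y$ is ordinary. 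You instead prove a single uniform statement: any unimodular form of rank $n\ge 2$ has a nonzero primitive ordinary vector, obtained by reducing mod $2$, noting that characteristic vectors are exactly the integral lifts of the unique mod-$2$ Wu class, counting ($2^n-1\ge 3$ nonzero classes, at most one of which is the Wu class), and lifting a suitable class to a primitive integral vector with a coordinate equal to $1$. Your route buys uniformity (no spin/nonspin dichotomy) and isolates a reusable algebraic lemma; the paper's route is marginally shorter because its spin case is immediate. You are also more careful than the paper on one point worth keeping: since the paper's manifolds are oriented, matching the excluded list $S^4,\C P^2,*\C P^2$ with the condition $b_2(X)\ge 2$ requires either reading the exclusions up to unoriented homeomorphism or noting, as you do, that the property ``every knot is slice in $X^\circ$'' is invariant under reversing the orientation of $X$ because mirroring permutes the set of all knots; the paper leaves this implicit when it cites Freedman for $b_2(X)>1$.
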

\begin{proof}
We prove that~$H_2(X^\circ)$ contains a nonzero primitive ordinary class.
If $X$ is spin,  any nonzero primitive class is ordinary.
In the nonspin case,  since $X\neq \C P^2,* \C P^2$ by hypothesis, work of Freedman~\cite{Freedman} ensures that $b_2(X)>1$.
We therefore obtain a
 direct sum decomposition~$H_2(X^\circ)=\langle x,y\rangle \oplus H'$ 
for some abelian group~$H'$ and $x,y \in H_2(X^\circ)$,  and one verifies that among the primitive classes~$x,y$ and~$x+y$, at least one is ordinary.
The result now follows from Corollary~\ref{cor:SimpleDisc}, noting that $H_2(X^\circ)\cong H_2(X^\circ,\partial X^\circ)$.
\end{proof}
The proof of Corollary~\ref{cor:KasprowskiPowellRayTeichnerEmbedding} shows that every knot bounds a disc in~$X^\circ$ which represents a primitive homology class; the inclusion of the adjective ``primitive" can also be deduced from the proof of~\cite[Corollary 1.5]{KasprowskiPowellRayTeichnerEmbedding} .
As an example application, note that since~$1\in \Z \cong H_2(\C P^2)$ is characteristic, Corollary~\ref{cor:SimpleDisc} shows that a knot~$K$ bounds a primitive disc in~$(\C P^2)^\circ$ if and only if~$\Arf(K)=0$ mod $2$, recovering~\cite[Theorem 1.2]{MarengonMillerRayStipsicz}.


\begin{example}
Since $K3$ is spin, any primitive class $x \in H_2(K3^\circ,S^3)$ is ordinary, and therefore Corollary~\ref{cor:SimpleDisc} implies that any knot bounds a disc in $K3^\circ$ representing $x$.
This contrasts with the smooth category where the corresponding result is false; see e.g.~\cite[Section~\(4.4\)]{ManolescuMarengonPiccirillo}.
\end{example}

\subsubsection*{The case $d=2$}
Next we consider the~$d=2$ case of Theorem~\ref{thm:Main}.
In this case,  the condition~$H_1(\Sigma_2(K))=0$ is equivalent to $|\det(K)|=1$ (see e.g.~\cite[Corollary~9.2]{Lickorish}).
This implies that $\Delta_K(-1) \equiv \pm 1$ mod~$8$ and therefore $\Arf(K)=0$ (see e.g.~\cite[Theorem~10.7]{Lickorish}).

\begin{corollary}
\label{cor:d=2}
Let \(N\) be a compact, oriented, simply-connected $4$-manifold with boundary \(S^3\) and let $x \in H_2(N, \partial N)$ be a nonzero class of divisibility $2$. 
Suppose \(K\subset S^3\) is a knot such that~\(|\det(K)|=1\).
The following are equivalent:
\begin{itemize}
\item The knot $K$ is sliced by a simple disc in $N$ representing $x$.
\item 
\begin{enumerate}
\item $\ks(N)+\frac{1}{8}(\sigma(N)-x \cdot x) \equiv 0 \ \text{ mod } 2$ \ when $x$ is characteristic, and
\item $b_2(N)\geq \operatorname{max}(\,|\sigma (N)-\frac{1}{2}\,x\cdot x +\sigma(K)|\,,\,|\sigma(N)|\,)$. 
\end{enumerate}
\end{itemize}
\end{corollary}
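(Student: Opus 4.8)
The plan is to deduce the corollary directly from Theorem~\ref{thm:Main}, specialised to divisibility $d = 2$, after matching up the hypotheses and simplifying the two listed conditions.

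First I would reconcile the hypotheses. As recorded in the discussion preceding the statement, the condition $H_1(\Sigma_2(K)) = 0$ appearing in Theorem~\ref{thm:Main} is equivalent to $|\det(K)| = 1$ by \cite[Corollary~9.2]{Lickorish}, and the latter forces $\Arf(K) = 0$ by \cite[Theorem~10.7]{Lickorish}. Substituting $\Arf(K) = 0$ into the characteristic (mod $2$) condition of Theorem~\ref{thm:Main} collapses it to $\ks(N) + \frac{1}{8}(\sigma(N) - x \cdot x) \equiv 0 \pmod 2$, which is exactly the first condition of the corollary. Thus the equivalence supplied by Theorem~\ref{thm:Main} becomes the equivalence claimed here, provided the inequality condition simplifies as stated.

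The remaining task is therefore to evaluate $\max_{0 \leq j < 2}|\sigma(N) - \tfrac{2j(d-j)}{d^2}\, x \cdot x + \sigma_K(e^{2\pi i j/d})|$ with $d = 2$, so that $j$ ranges over $\{0, 1\}$. For $j = 0$ the coefficient $\tfrac{2j(d-j)}{d^2}$ vanishes and $\sigma_K(e^0) = \sigma_K(1) = 0$, since the Levine--Tristram signature is identically zero at $1$; this term contributes $|\sigma(N)|$. For $j = 1$ the coefficient equals $\tfrac{2\cdot 1 \cdot 1}{4} = \tfrac{1}{2}$, and $\sigma_K(e^{\pi i}) = \sigma_K(-1)$ is the classical signature $\sigma(K)$; this term contributes $|\sigma(N) - \tfrac{1}{2}\, x \cdot x + \sigma(K)|$. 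Taking the maximum of the two recovers the second condition of the corollary.

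Because the corollary is a direct specialisation, I do not anticipate a genuine obstacle: the only inputs are the two standard evaluations $\sigma_K(1) = 0$ and $\sigma_K(-1) = \sigma(K)$ together with the implication $|\det(K)| = 1 \Rightarrow \Arf(K) = 0$, all of which are either standard or quoted in the text. The one point deserving a line of care is confirming that the $j = 0$ term genuinely contributes $|\sigma(N)|$ to the maximum rather than being dominated by the $j = 1$ term, which is exactly what makes the binary maximum in the second condition necessary.
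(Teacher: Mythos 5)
Your proposal is correct and matches the paper's treatment exactly: the corollary is obtained by specialising Theorem~\ref{thm:Main} to $d=2$, using $H_1(\Sigma_2(K))=0 \Leftrightarrow |\det(K)|=1$ and the consequent vanishing $\Arf(K)=0$ to simplify the characteristic condition, and evaluating the $j=0$ and $j=1$ terms via $\sigma_K(1)=0$ and $\sigma_K(-1)=\sigma(K)$ to recover the stated maximum. The paper records these same reductions in the paragraph preceding the corollary, so no further comment is needed.
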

Here,  $\sigma(K)=\sigma_K(-1)$ denotes the (Murasugi) signature of $K$.

\begin{example}
\label{ex:CP2d=2}
In~$(\C P^2)^\circ$, the second condition can be rewritten as~$1 \geq \operatorname{max}(|-1+\sigma(K)|,1)$.
Since~$2 \in \Z \cong H_2(\C P^2)$ is ordinary and the signature of a knot is even, it follows that~$K$ with~$|\det(K)|=1$ bounds a simple~$\Z_2$-disc in~$\C P^2$ if and only if~$\sigma(K) \in \{0,2\}$.
In~$(\overline{\C P^2})^\circ$, the second condition can be rewritten as~$1 \geq \operatorname{max}(|1+\sigma(K)|,1)$, so this time it follows that~$K$ with~$|\det(K)|=1$ bounds a simple~$\Z_2$-disc if and only if~$\sigma(K) \in \{-2,0\}$.
\end{example}

\begin{remark}
\label{rem:Trefoil}
The left handed trefoil knot $K$ has nontrivial determinant, and it is not too difficult to verify that it bounds a $\Z_2$-disc in $(\C P^2)^\circ$;  such a disc arises by unknotting $K$ via a single crossing change whose corresponding crossing change disc intersects $K$ twice algebraically, see e.g.~\cite[Figure 3.1]{MarengonMillerRayStipsicz}.
Thus, the assumption $|\det(K)|=1$ is not necessary for the conclusion of Corollary~\ref{cor:d=2} to hold,  and more generally, the assumption $H_1(\Sigma_d(K))=0$ is not necessary to the conclusion of Theorem~\ref{thm:Main}. However, as noted in Theorem~\ref{thm:StableWithBoundaryIntro} and Proposition~\ref{prop:SignatureCondition}, the conditions on the Arf invariant and the signature are necessary for the knot~$K$ to bound a disc $D$ representing $x$.
\end{remark}

\begin{remark}
Theorem~\ref{thm:Main}, its corollaries, and Theorem~\ref{thm:StableWithBoundaryIntro} also hold if $\partial N$ is an integral homology $3$-sphere provided the congruence in the characteristic case is replaced by
$$\operatorname{Arf}(K)+\ks(N)+\frac{1}{8}(\sigma(N)-x \cdot x) \equiv \mu(\partial N) \ \text{ mod } 2,$$
where~$\mu(\partial N)$ denotes the Rochlin invariant of~$\partial N$.
For example, when~$N=X_{\pm 1}(J)$ is the \(4\)-manifold obtained by attaching a \(\pm 1\)-framed \(2\)-handle to \(D^4\), we deduce that a knot~$K$ is sliced by a simple disc representing a primitive class if and only if \(\Arf(K) \equiv \Arf(J) \mod{2}\); indeed Gonz\'alez-Acu\~na proved that~$\mu(S^3_{\pm 1}(J))=\Arf(J)$~\cite{GonzalezAcuna}.
\end{remark}

\subsection{Simple slice discs stably}

We say a knot $K\subset N$ is \emph{stably slice} if it is the boundary of a locally flat disc in~\(N_k := N\#^k (S^2\times S^2)\) for some \(k \geq 0\).
The proof of Theorem~\ref{thm:Main} closely follows the approach of Lee and Wilczy\'nski~\cite{LWCommentarii,LWKtheory}:~the first condition of Theorem~\ref{thm:Main} ensures that $K$ is stably slice, whereas the second condition ensures that the \(S^2\times S^2\) summands can be removed without harming the disc. We record the outcome of this first step as it might be of independent interest.
We say that a homology class~$x \in H_2(N,\partial N)$ is \emph{stably representable} if there is a~$k \geq 0$ such that~$x \oplus 0 \in H_2(N_k,\partial N)$ can be represented by an embedded surface.

\begin{theorem}
\label{thm:StableWithBoundaryIntro}
Let~$N$ be a simply-connected~$4$-manifold with boundary~$S^3$, let~$K\subset S^3$ be a knot.
\begin{enumerate}
\item\label{item:intro1} The knot $K$ is stably sliced by a disc stably representing $x$ if and only if either~$x$ is  ordinary or~$x$ is characteristic and
\begin{equation}
\label{eq:ArfConditionmain}
\operatorname{Arf}(K)+\ks(N)+\frac{1}{8}(\sigma(N)-x \cdot x) \equiv 0 \mod2.
\end{equation}
\item\label{item:intro2} For any~$g > 0$, every class~$x\in H_2(N,\partial N)$ is stably representable by a genus $g$ surface with boundary $K$.
\color{black}
\end{enumerate}
In both \eqref{item:intro1} and \eqref{item:intro2}, the surface may be taken to be simple.
\end{theorem}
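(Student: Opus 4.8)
The plan is to treat the two implications of \eqref{item:intro1} separately, with the congruence \eqref{eq:ArfConditionmain} appearing on both sides as the Guillou--Marin obstruction, and then to obtain \eqref{item:intro2} from the same construction. Throughout I would use that the $S^2\times S^2$ summands are even and have vanishing signature and Kirby--Siebenmann invariant, so that $\sigma(N_k)=\sigma(N)$, $\ks(N_k)=\ks(N)$, the self-intersection of $x\oplus 0$ in $N_k$ equals $x\cdot x$, and $x\oplus 0$ is characteristic in $N_k$ if and only if $x$ is characteristic in $N$.

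For necessity in \eqref{item:intro1}, suppose $K$ bounds a locally flat disc $D\subset N_k$ with $[D]=x\oplus 0$ and assume $x$ is characteristic. Then $D$ is a characteristic surface relative to its boundary knot, and the topological Guillou--Marin congruence (the Freedman--Quinn refinement, carrying the $\ks$ correction term) reads
\[
\sigma(N_k)-[D]\cdot[D]\equiv 8\bigl(\Arf(K)+\ks(N_k)\bigr)\pmod{16},
\]
where the usual Brown-invariant contribution of the surface vanishes because $H_1(D)=0$. Dividing by $8$ and substituting the identities above yields exactly \eqref{eq:ArfConditionmain}. When $x$ is ordinary there is nothing to prove, so this settles the ``only if'' direction.

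For sufficiency I would first represent $x$ by \emph{some} embedded surface with boundary $K$, and then reduce its genus using the $S^2\times S^2$ summands. To build the initial surface, push the interior of a Seifert surface for $K$ into $N$ to get a nullhomologous surface with boundary $K$, and tube it to an embedded closed surface representing the absolute class corresponding to $x$; such a closed representative exists because $N$ is simply connected. Given a symplectic basis of curves on the resulting surface $F$, each curve is nullhomotopic in the simply-connected $N_k$ and hence bounds an immersed disc. Stabilising by $S^2\times S^2$ supplies geometrically dual spheres with which to run the Norman trick, removing the intersections of these compressing discs with $F$ and with one another and correcting their normal framings by boundary-connect-summing with a sphere. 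Surgering $F$ along the resulting embedded, framed, disjoint discs lowers the genus one handle at a time.

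The crux is how far this reduction can be pushed, and I expect this to be the main obstacle. For \eqref{item:intro2} one simply stops at genus $g$: since $g\geq 1$ there is always a spare handle to absorb any framing parity that would obstruct a further compression, so no hypothesis on $x$ is needed. For \eqref{item:intro1} one must reach genus $0$, and here the final compression carries a $\Z/2$ framing obstruction; when $x$ is not characteristic a class pairing oddly with $x$ is available to correct the framing, while when $x$ is characteristic the residual obstruction is precisely the Guillou--Marin term above, so \eqref{eq:ArfConditionmain} is exactly what removes the last handle. Finally, for the refinement that the surface be simple, I would arrange each compressing disc and stabilising sphere to meet $F$ only in the controlled meridional way dictated by the Norman trick, so that $\pi_1(N_k\setminus F)$ is normally generated by a single meridian; reducing this group to its abelianization is the remaining delicate point, but since abelian groups are good in the sense of Freedman--Quinn, the surgery machinery applies in this simply-connected ambient setting to realise a complement with abelian, hence cyclic, fundamental group in each case.
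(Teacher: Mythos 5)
Your overall architecture (a Guillou--Marin/Klug-type congruence for necessity, stable genus reduction for sufficiency, extra surgeries for simplicity) matches the paper's, and your necessity direction is essentially the paper's argument. But the compression mechanism you propose has a genuine gap: the Norman trick with the stabilising spheres cannot remove intersections of a compressing disc $D$ with the surface $F$. The spheres $S^2\times\{pt\}$ and $\{pt\}\times S^2$ created by connected sum are disjoint from $F$, and tubing $D$ into them leaves $D\cap F$ untouched; to apply Norman's trick at a point of $D\cap F$ one needs a sphere meeting $F$ geometrically once, which does not exist in general and cannot be manufactured by tubing $F$ into a stabilising sphere without moving $[F]$ off the class $x\oplus 0$ --- exactly the constraint that ``stably representing $x$'' imposes. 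The working mechanism is different: use boundary twists to trade $D\cdot F$ against the framing defect $W(D)$ until $D\cdot F=0$, then remove the now algebraically cancelling intersections with the \emph{stable Whitney trick} (Lemma~\ref{lem:whitneytrick}, Corollary~\ref{cor:StableWhitney}), and finally adjust the framing by even amounts (Lemma~\ref{lem:modifybyeven}). It is precisely this trade-off that makes $W(D)+D\cdot F \bmod 2$ --- the Freedman--Kirby--Matsumoto quadratic form $q_F$ --- the residual obstruction; the ``$\Z/2$ framing parity'' you invoke is this invariant, but it does not arise from the moves you describe. Moreover, to reach genus $0$ (or to know that at most \emph{one} handle is obstructed, as your item~\eqref{item:intro2} argument needs) you must use the Arf classification of quadratic refinements to choose a half-symplectic basis of disjoint curves on which $q_F$ vanishes, together with Klug's theorem identifying $\Arf(q_F)$ with the left-hand side of~\eqref{eq:ArfConditionmain}; and in the ordinary case you need a sphere class $u$ with $Q_N(u,u)+Q_N^\partial(x,u)$ odd (not merely $Q_N^\partial(x,u)$ odd) that is also primitive and ordinary, so that it is representable by an embedded sphere.

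The simplicity step is also wrong as stated. Goodness of abelian groups and the Freedman--Quinn surgery machinery are not the relevant tools --- the whole point of stable theorems is to avoid the disc embedding theorem --- and no surgery problem is actually set up in your sketch. To abelianise $\pi_1(N_k\setminus F)$ one performs elementary surgery on embedded loops in the complement representing normal generators of the commutator subgroup; no embedded discs are needed to surger a framed loop. What \emph{does} require proof, and is absent from your proposal, is that these ambient surgeries (i) produce $S^2\times S^2$-summands rather than $S^2\widetilde{\times}S^2$-summands, and (ii) leave $F$ representing $x\oplus 0$ rather than $x\oplus(\text{something nonzero})$. Both are framing and homology statements: since commutator curves are nullhomologous in the complement, the surgery framing can be specified by a disc meeting $F$ algebraically zero times, which is the content of Proposition~\ref{prop:NullhomologousStablyRepresented}. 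Without this control, your construction yields a simple surface in \emph{some} $4$-manifold, but not necessarily in an $N_k$, and not necessarily in the required class.
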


Given a class~$x \in H_2(N,\partial N)$, the combination of Theorem~\ref{thm:Main} and Theorem~\ref{thm:StableWithBoundaryIntro} suggests analysing the minimal number of stabilisations needed for a knot~$K$ to bound a disc representing~$x$.
When~$x=0$ and $N=D^4$, this quantity 
is referred to as the \emph{stabilising number} $\sn(K)$ and has been studied in~\cite{ConwayNagel,KonnoMiyazawaTaniguchiInvolutions,FellerLewarkBalanced,FukumotoTaniguchi}.

\begin{definition}
For $x \in H_2(N,\partial N)$, the \emph{$(x,N)$-stabilising number} of $K$ refers to
\[
 \sn_{x,N}(K)=\min \{\ k \ | \ K  \text{ bounds a disc \(D \subset  N\#^k (S^2\times S^2)\) stably representing } x \}
\]
whereas the \emph{simple $(x,N)$-stabilising number} is 
\[ \sn_{x,N}^{\operatorname{simple}}(K)=\min \{\ k \ | \ K \text{ bounds a simple disc \(D \subset  N\#^k (S^2\times S^2)\) stably representing } x \}.
\]
\end{definition}

\begin{remark}
Clearly, by definition, it is always the case that $\sn_{x,N}(K) \leq \sn_{x,N}^{\operatorname{simple}}(K)$.
\end{remark}

When~$N=D^4$ and~$x=0$, it is known that~$\sn(K)$ is finite if and only if~$\Arf(K)=0$~\cite{SchneidermanStable} (see also~\cite[Theorem 2.9]{ConwayNagel}) in which case~$\sigma_K(\omega) \leq \sn(K) \leq g_4(K)$, where the first inequality holds whenever~$\omega \in S^1$ is not the root of a~$p(t)\in \Z[t^{\pm 1}]$ with~$p(1)=1$~\cite[Theorems~3.10 and~5.15]{ConwayNagel}.
Here, $g_4(K)$ denotes the $4$-genus of $K$.
In this setting, requiring the disc be simple amounts to it having knot group~$\Z$ in which case,~$\sn_{0,D^4}^{\text{simple}}(K)$ is equal to the \emph{$\Z$-stabilising number}~$\sn_\Z(K)$ introduced in~\cite{FellerLewarkBalanced}.
Our results lead to analogous (but stronger) statements in the case where~${x \neq0}$,  as we are able to show that when $d$ is a prime power,  $\sn_{x,N}(K)$ only depends on the algebraic topology of $N$ and on the Levine-Tristram signature of $K$.

\begin{corollary}
\label{cor:StabilisingNUmber}
Fix a nonzero~$x \in H_2(N,\partial N)$ of divisibility $d$ 
and a knot~$K \subset \partial N$.
Then~$\sn_{x,N}(K)$ is finite if and only if~$\sn^{\operatorname{simple}}_{x,N}(K)$ is finite if and only if~$x$ is ordinary or~$x$ is characteristic and~\eqref{eq:ArfConditionmain} from Theorem~\ref{thm:StableWithBoundaryIntro} holds.

If the stabilising numbers are finite and also $H_1(\Sigma_d(K))=0$, then
\begin{equation}
\label{eq:max}
\sn_{x,N}^{\operatorname{simple}}(K)=\frac{1}{2}\left(\max_{0\leq j<d}\left |\sigma (N)-\frac{2j(d-j)}{d^2} \,x\cdot x+\sigma_K (e^{\frac{2\pi i j}{d}})\right |-b_2(N)\right).
\end{equation}
If, additionally, $d$ is a prime power, then
\[
\sn_{x,N}(K)=\sn_{x,N}^{\operatorname{simple}}(K).
\]
\end{corollary}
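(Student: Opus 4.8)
The plan is to deduce every assertion from Theorems~\ref{thm:Main} and~\ref{thm:StableWithBoundaryIntro} applied to the stabilised manifolds $N_k := N \#^k(S^2 \times S^2)$, supplemented by a signature obstruction valid for arbitrary discs.

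First I would record the behaviour of the relevant invariants under stabilisation. Since $S^2\times S^2$ has signature $0$, vanishing Kirby--Siebenmann invariant, and hyperbolic intersection form, passing from $N$ to $N_k$ leaves $\sigma(N)$, $\ks(N)$ and the self-intersection $x\cdot x$ unchanged, while $b_2(N_k)=b_2(N)+2k$. The class $x\oplus 0\in H_2(N_k,\partial N)$ has the same divisibility $d$ as $x$, and it is characteristic if and only if $x$ is, because $0$ is the characteristic element of each hyperbolic summand. As $H_1(\Sigma_d(K))=0$ is a condition on $K$ alone, the Arf congruence \eqref{eq:ArfConditionmain} is also insensitive to $k$. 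Writing $M:=\max_{0\le j<d}\left|\sigma(N)-\frac{2j(d-j)}{d^2}x\cdot x+\sigma_K(e^{2\pi i j/d})\right|$, Theorem~\ref{thm:Main} applied to $(N_k,x\oplus 0)$ then reads: $K$ bounds a simple disc in $N_k$ representing $x\oplus 0$ if and only if \eqref{eq:ArfConditionmain} holds (independently of $k$) and $b_2(N)+2k\ge M$.

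For the finiteness dichotomy, note that $\sn_{x,N}(K)<\infty$ is precisely the statement that $K$ is stably sliced by a disc stably representing $x$; Theorem~\ref{thm:StableWithBoundaryIntro}\eqref{item:intro1} gives the asserted criterion, and its closing sentence (that the disc may be taken simple) shows the identical criterion governs finiteness of $\sn_{x,N}^{\operatorname{simple}}(K)$, yielding all three equivalences. For \eqref{eq:max}, the reformulation of Theorem~\ref{thm:Main} above identifies $\sn_{x,N}^{\operatorname{simple}}(K)$ with the least $k\ge 0$ with $b_2(N)+2k\ge M$. The one arithmetic point is a parity check: since $x$ has divisibility $d$ and $\partial N=S^3$, its lift to $H_2(N)$ is $d$ times a class, so $x\cdot x$ is divisible by $d^2$ and each $\frac{2j(d-j)}{d^2}x\cdot x$ is an even integer; since $H_1(\Sigma_d(K))=0$ forces $\Delta_K(e^{2\pi i j/d})\ne 0$, each $\sigma_K(e^{2\pi i j/d})$ is the signature of a nonsingular Hermitian form of even rank and hence even; and $b_2(N)\equiv\sigma(N)\bmod 2$ as the intersection form is nonsingular. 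Thus every term inside the maximum is congruent to $\sigma(N)\equiv b_2(N)\bmod 2$, so $M-b_2(N)$ is even and the least admissible $k$ equals $\tfrac12(M-b_2(N))$, which is \eqref{eq:max} (read as $0$ when $M<b_2(N)$, where a simple disc already exists in $N$).

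Finally, the prime power case claims $\sn_{x,N}(K)=\sn_{x,N}^{\operatorname{simple}}(K)$. As $\sn_{x,N}(K)\le\sn_{x,N}^{\operatorname{simple}}(K)$ holds by definition, I only need a matching lower bound valid for \emph{all} discs, and this is where I expect the genuine difficulty to lie. The plan is to invoke the signature obstruction of Proposition~\ref{prop:SignatureCondition}: any disc $D\subset N_k$ representing $x\oplus 0$ forces $b_2(N_k)\ge\left|\sigma(N)-\frac{2j(d-j)}{d^2}x\cdot x+\sigma_K(e^{2\pi i j/d})\right|$ for each $j$ at which the Levine--Tristram signature is an honest obstruction, namely whenever $e^{2\pi i j/d}$ is not a root of an integral polynomial with value $1$ at $1$. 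The decisive input is the arithmetic fact that a root of unity enjoys this property exactly when its order is a prime power (equivalently $\Phi_{p^r}(1)=p\ne 1$): when $d=p^r$, every $e^{2\pi i j/d}$ with $0\le j<d$ has prime power order, so the obstruction holds for all $j$ at once and gives $b_2(N)+2k\ge M$, i.e. $k\ge\tfrac12(M-b_2(N))=\sn_{x,N}^{\operatorname{simple}}(K)$. The main obstacle is thus the lower bound for non-simple discs underlying Proposition~\ref{prop:SignatureCondition}: it rests on analysing the cyclic cover of $N_k$ cut out by the divisibility-$d$ class and computing its equivariant signatures through a $G$-signature/Casson--Gordon argument, the prime power hypothesis being exactly what keeps the relevant covering homology, and hence the signature defect, under control.
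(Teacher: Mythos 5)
Your proposal is correct and follows essentially the same route as the paper: finiteness via Theorem~\ref{thm:StableWithBoundaryIntro}, the formula~\eqref{eq:max} by applying Theorem~\ref{thm:Main} to $N\#^k(S^2\times S^2)$ (whose necessity direction is exactly the signature bound of Proposition~\ref{prop:SignatureCondition}), and the prime-power equality via Gilmer's obstruction for arbitrary, not necessarily simple, discs---which is precisely what your criterion that $e^{2\pi i j/d}$ have prime-power order amounts to in this setting. Your parity check that the maximum is congruent to $b_2(N)$ modulo $2$, and your remark about reading the right-hand side of~\eqref{eq:max} as $0$ when it is negative, are worthwhile details that the paper's proof leaves implicit, but they do not constitute a different approach.
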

\begin{proof}
The characterisation of finite stabilisation numbers follows immediately from Theorem~\ref{thm:StableWithBoundaryIntro}. For the next statements, write~$m$ for the right-hand side of equation \eqref{eq:max}. Applying Theorem~\ref{thm:Main} to the~$4$-manifold~$N\#^m(S^2\times S^2)$ shows that if~$H_1(\Sigma_d(K))=0$, there exists a simple slice disc in~$N\#^m(S^2\times S^2)$ representing~$x\oplus 0$. Thus~$\sn_{x,N}^{\operatorname{simple}}(K)\leq m$ and hence~$\sn_{x,N}(K)\leq m$. The inequality~$\sn_{x,N}^{\operatorname{simple}}(K)\geq m$ follows from Proposition~\ref{prop:SignatureCondition}, which is essentially due to Gilmer~\cite{GilmerConfiguration}. As discussed in the proof of Proposition~\ref{prop:SignatureCondition}, Gilmer requires that~$d$ be a prime power dividing~$x$,
but when the surface is simple and $d$ is the divisibility of $x$,
one can argue that this stipulation can be dropped. 
Whence~$m$ is a lower bound for~$\sn_{x,N}^{\operatorname{simple}}(K)$, and also for~$\sn_{x,N}(K)$ if~$d$ is a prime power. This proves the two claimed equalities.
\end{proof}

\begin{remark}
We do not know whether there exist examples where $H_1(\Sigma_d(K))=0$ and $\sn_{x,N}(K)\neq\sn_{x,N}^{\operatorname{simple}}(K)$. 
It would be interesting to know whether such examples exist, or conversely whether prime power divisibility is a non-necessary condition, just an artefact of the proof.
\end{remark}

\subsection{Examples}

We give some sample applications for some of our results. 
First,  we give examples of knots for which Theorem~\ref{thm:Main} can be used to generate locally flat discs in a homology class, where a smooth disc can be proven to not exist.

\begin{example}
We show that if $K\in\{14n11942, 14n13659\}$ then $K$ bounds a locally flat~$\Z_3$-disc in~\((\overline{\C P}^2)^\circ\) but does not bound a smooth $\Z_3$-disc in~\((\overline{\C P}^2)^\circ\). To see $K$ does not bound a smooth disc, we used KnotJob~\cite{SchuetzKJ} to compute the $s$-invariant is $s(K)=8$, then applied~\cite[Theorem~1.3]{Qin} to the class $3\in H^2(\overline{\C P}^2,S^3)$, noting that $s(K)=8>3^2-3=6$. To see~Theorem~\ref{thm:Main} applies, we first computed that in each case $\left\vert\Delta_K(e^\frac{2\pi i}{3})\cdot\Delta_K(e^\frac{4\pi i}{3}) \right\vert =1$, so by a classical result of Goeritz and Fox~\cite{Goeritz34, Fox-56-iii} we have that $H_1(\Sigma_3(K))=0$. We then used Sage to compute~$\Arf(K)=1$ so that~Theorem~\ref{thm:Main}\eqref{item:1Main} holds. Finally, we used Sage to compute that in each case $\sigma_K (e^\frac{2\pi i}{3})=\sigma_K (e^\frac{4\pi i}{3})=-4$, from which it may be computed that~Theorem~\ref{thm:Main}\eqref{item:2Main} holds, confirming $K$ bounds the claimed locally flat disc.
\end{example}

\begin{remark}
Examples of the previous sort are difficult to find among low-crossing knots, as we now discuss. The knot invariants in Theorem~\ref{thm:Main} are the Arf invariant and Levine-Tristram signature, the latter of which achieves its maximum near the Murasugi signature $\sigma(K)=\sigma_K(-1)$, and computations with low-crossing knots show this is roughly where the quantity on the right of Theorem~\ref{thm:Main}\eqref{item:2Main} usually achieves its maximum. On the other hand, the current best techniques to obstruct smooth sliceness within a homology class use the $\tau$-invariant (see~\cite[Theorem 1.1]{MR2026543}) or~$s$-invariant (see~\cite{Qin}). Due to the well-known close connections between $\sigma(K)$, $\tau$, and~$s$, among low-crossing knots, there is a tension between trying to make $|s|$ or $|\tau|$ large relative to~$d$ to obstruct smooth sliceness, while desiring small $|\sigma(K)|$ relative to $d$ to apply~Theorem~\ref{thm:Main}.
\end{remark}

Next,  we consider when winding number 0 satellites of the negative torus knot~\(5_1=T(-2,5)\) bound a $\Z_d$-disc in~\(N=(\C P^2)^\circ\) and its stabilisations. For $d=3$, the knot $5_1$ is readily seen by \emph{ad hoc} methods to bound a \emph{smooth}~$\Z_d$-disc in~$(\C P^2)^\circ$; the purpose of taking satellites is that it is likely that a suitable choice of companion knot will preclude the satellite from being smoothly~$\Z_3$-slice in~$(\C P^2)^\circ$. As we shall see, the knot invariants used in~Theorem~\ref{thm:Main} are robust under a winding number $0$ satellite operation so these satellites will still bound locally flat $\Z_3$-discs.

\color{black}

\begin{example}
\label{ex:T25}
Use $P(K)$ to denote a satellite knot with pattern~$P=T(-2,5)$, winding number zero,  and companion $K$.
We claim~$P(K)$ bounds a~$\Z_d$-disc in~\(N=(\C P^2)^\circ\) if and only if~$d=3$.

We first show $P(K)$ indeed bounds a $\Z_3$-disc.
Performing winding number zero satellite operations does not affect the Levine-Tristram signature,  the Arf invariant and the homology of the~$d$-fold branched cover, so it suffices to verify the conditions for Theorem~\ref{thm:Main} are satisfied for~$P$.
For~$d$,~$p$ and~$q$ pairwise coprime, the~$d$-fold branched cover of the~$(p,q)$-torus knot is an integer Brieskorn homology sphere~\cite{MilnorBrieskorn}, so $H_1(\Sigma_3(P))=0$.
The remaining conditions for Theorem~\ref{thm:Main} follow because $P$ is \emph{smoothly} $\Z_3$-slice in~\(N=(\C P^2)^\circ\), but we check them manually as we will use the computation later anyway.

 The knot has~$\Arf(P)=1$ and~$\sigma(P)=4$, so the characteristic class~$3 \in H_2((\C P^2)^\circ,S^3)$ satisfies~\eqref{item:1Main} from Theorem~\ref{thm:Main}.
Using Sage, we compute~$\sigma_{P}(e^{\frac{2\pi i j}{3}})=0, 4, 4$ for~$j=0,1,2$, giving
\[\max_{0\leq j<3}\left|1-2j(3-j)+\sigma_{P} (e^{\frac{2\pi i j}{3}})\right|=1.\]
Thus~\eqref{item:2Main} from Theorem~\ref{thm:Main} is also satisfied and~$P(K)$ bounds a topological~$\Z_3$-disc, as claimed.

Now we  show that if $P(K)$ bounds a $\Z_d$-disc, then $d=3$.
Again, it suffices to verify this for $P$.
It follows from Theorem~\ref{thm:StableWithBoundaryIntro}~\eqref{eq:ArfConditionmain} that~$P$ does not bound any disc in~$N$ representing a primitive class, even stably.
Since~$\sigma(P)=~4$,  the knot~$P$ cannot be sliced by a disc representing a class of even divisibility $d$: if it did, Gilmer's inequality~$1 \geq |d^2/2-1-\sigma(P)|$ (see e.g.~\cite[Lemma~2.1 (2)]{MarengonMillerRayStipsicz}) would lead to a contradiction.
Similarly, Gilmer's work shows that if~$P$ were sliced by a disc representing a class of divisibility~$d$ divisible by an odd prime power~$m$, then~$1 \geq |(m^2-1)d^2/2m^2-1-\sigma_{P}(e^\frac{\pi i (m-1)}{m})|$ (see e.g.~\cite[Lemma~2.1~(3)]{MarengonMillerRayStipsicz} and~\cite[Theorem 3.6]{ManolescuMarengonPiccirillo}).
If~$d$ is a prime power, taking~$m=d$, this yields the inequality~$1 \geq |(d^2-1)/2-1-\sigma_{P}(e^\frac{\pi i (d-1)}{d})|$, which forces~$d=3$.
As noted above (and proved in Proposition~\ref{prop:SignatureCondition}), when obstructing~$P$ from bounding a~$\Z_d$-disc, the prime power hypothesis can be dropped, yielding the claim.

As a final, quick, sample application we consider for which $d>3$, coprime to $2$ and $5$, the stabilising number of $T(2,5)$ is finite.
 One can compute that for $d=7, 9$, item~\eqref{eq:ArfConditionmain} from Theorem~\ref{thm:StableWithBoundaryIntro} does not hold, and so \(T(2,5)\) does not bound a \(\Z_7\)- or \(\Z_9\)-disc stably. 
 When \(d=11\), item~\eqref{eq:ArfConditionmain} from Theorem~\ref{thm:StableWithBoundaryIntro} holds and so we may apply Corollary~\ref{cor:StabilisingNUmber} to compute
$$\sn_{11,(\C P^2)^\circ}(T(2,5))=\sn_{11,(\C P^2)^\circ}^{\operatorname{simple}}(T(2,5)) =27.$$
\end{example}


\subsection*{Organisation}

In Section~\ref{sec:Stable Embedding},  we show that the class $x$ can be stably embedded,  thus proving Theorem~\ref{thm:StableWithBoundaryIntro}.
In Section~\ref{sec:Sufficient}, we list three conditions guaranteeing a stable embedding can be promoted to an embedding.
In Section~\ref{sec:H2Free}-\ref{sec:EvenessCondition}, we verify the three conditions are satisfied under the hypotheses of Theorem~\ref{thm:Main}.
In Section~\ref{sec:ProofMain}, the strands of the argument are collected together to prove Theorem~\ref{thm:Main}.
The appendix proves two folklore results that are required during the proof.

\subsection*{Acknowledgements}
The authors are grateful to Ian Hambleton,  Daniel Kasprowski,  and Lisa Piccirillo for useful discussions as well as to Mark Powell and Aru Ray for several helpful suggestions and discussions concerning the material in Section~\ref{sec:Stable Embedding}.
AC was partially supported by the NSF grant DMS~2303674.

\subsection*{Conventions}
We work in the topological category. Embeddings are assumed to be locally flat and manifolds are assumed to be compact, connected and oriented.
Modules are assumed to be finitely generated.

\section{Stable embedding}\label{sec:Stable Embedding}

Throughout this section, let~$N$ be a simply-connected~$4$-manifold with boundary $S^3$ and set~$N_k:= N \#^k S^2 \times S^2$. We say that a  homology class~$x \in H_2(N,\partial N)$ is \emph{stably representable} if there is a~$k \geq 0$ such that~$x \oplus 0 \in H_2(N_k,\partial N)$ is represented by an embedded surface. The goal of this section is to prove Theorem~\ref{thm:StableWithBoundaryIntro}.

\medbreak

We briefly sketch the proof strategy. Without any stabilisation, one can always represent~$x$ by \emph{some} embedded surface. 
The objective is to perform ambient surgery on this surface to reduce the genus to zero, without leaving the homology class. Ambient surgery along curves in the surface needs to be guided  by ``surface-framed'', embedded discs in the $4$-manifold, whose interiors are disjoint from the surface. 
Without stabilising we will only have incorrectly framed, embedded discs whose interiors intersect the surface. Stabilising adds the freedom to change a disc framing by an even number (Lemma~\ref{lem:modifybyeven}), and to perform the stable Whitney move, removing algebraically cancelling double points between surface and disc (Lemma~\ref{lem:whitneytrick}).
 If $x$ is ordinary and $N$ is not spin, one also has the freedom to tube into a $w_2$-nontrivial sphere and change the framing on the disc. If $x$ is ordinary, the procedures just described mean stable ambient surgery can always be performed to reduce the surface to genus zero. 
 If~$x$ is characteristic, stable ambient surgery can be performed until the genus is reduced to one. There is then a genuine obstruction to performing the final surgery, measured by the condition~\eqref{eq:ArfConditionmain}. 
 Finally, in either case and at the expense of further stabilisations, surgeries may be performed on the ambient manifold, in the complement of the surface, to kill commutators in the  fundamental group and produce a \emph{simple} surface.

\begin{remark}
This strategy is inspired by the work of Lee and Wilczy\'nski in the closed case~\cite[proof of Theorem 2.1]{LWCommentarii} which itself draws on ideas from of Freedman and Kirby~\cite{FreedmanKirby}.
We adapt these arguments to the case of discs, referring to~\cite{LWCommentarii} when the arguments are unchanged, while also supplementing some details related to the verification that the disc lies in~$N \#^k S^2 \times S^2$ (i.e. that no $\# S^2 \widetilde{\times} S^2$ is involved) and that it indeed represents~$x \oplus 0$; see Proposition~\ref{prop:NullhomologousStablyRepresented}.
\end{remark}
\color{black}

\subsection{Disc framing obstruction and the Matsumoto-Freedman-Kirby quadratic form}

The first half of the proof of  Theorem~\ref{thm:StableWithBoundaryIntro} consists of arguing that it can be reduced to the following statement, which we call the \emph{ambient surgery criteria}. The purpose of the current section is to describe and motivate the quadratic form $q_F$ mentioned in the ambient surgery criteria.

\begin{theorem}[The ambient surgery criteria]
\label{thm:AmbientSurgeryCriterion}
Let~$N$ be a simply-connected~$4$-manifold with boundary~$S^3$, let~$K  \subset \partial N$ be a knot,  let~$x \in H_2(N,\partial N)$,  let $F$ be a surface representing $x$,  with boundary~$K$, and let $\gamma \subset F$ be a simple closed curve.
If
\begin{enumerate}
\item either $x$ is ordinary, or
\item $x$ is characteristic and $q_F([\gamma])=0$,
\end{enumerate}
then there is an $r \ge 0$ such that ambient surgery on $F \subset N_r$ along $\gamma$ is possible and such that the effect $F'$ of surgery represents $x \oplus 0$ and has $\partial F'=K$.
\end{theorem}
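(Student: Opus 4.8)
The plan is to reduce the problem to a question about the normal framing of a disc bounding $\gamma$ in $N_r$, and then to use the two stabilisation moves (framing change and stable Whitney trick) recorded as Lemmas~\ref{lem:modifybyeven} and~\ref{lem:whitneytrick} to correct this framing to the surface framing.

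First I would set up the ambient surgery data. Ambient surgery on $F$ along the simple closed curve $\gamma \subset F$ requires an embedded disc $\Delta \subset N_r$ whose boundary is $\gamma$, whose interior is disjoint from $F$, and which is \emph{surface-framed}, meaning the normal framing of $\Delta$ restricts on $\gamma$ to the framing induced by $F$. The surgered surface $F'$ is then obtained by removing an annular neighbourhood of $\gamma$ in $F$ and gluing in two parallel copies of $\Delta$. Since $N$ is simply-connected, so is $N_r$, and $\gamma$ bounds \emph{some} immersed disc; after pushing into the interior and performing finger moves one obtains an embedded disc $\Delta$ whose interior meets $F$ transversely in finitely many points, but which is generally framed incorrectly. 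The strategy is to fix both defects at the cost of stabilisation.

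The key steps, in order, are as follows. (i) Arrange an embedded disc $\Delta \subset N_r$ with $\partial \Delta = \gamma$ and interior transverse to $F$; (ii) remove the interior intersection points $\Delta \cap \operatorname{Int} F$ in algebraically cancelling pairs using the stable Whitney move of Lemma~\ref{lem:whitneytrick}, each application consuming one $S^2 \times S^2$ summand; (iii) compute the resulting framing obstruction $\omega(\Delta) \in \Z$, the difference between the disc framing and the surface framing, and correct it to zero. For step (iii), Lemma~\ref{lem:modifybyeven} lets me change the disc framing by any even integer at the cost of a stabilisation, so I must show the framing obstruction is \emph{even} precisely in the cases allowed by the hypotheses. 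This is where the quadratic form $q_F$ enters: $q_F([\gamma])$ measures exactly the mod~$2$ framing obstruction, so $q_F([\gamma])=0$ means the obstruction is even and can be killed. When $x$ is ordinary there is an additional maneuver: one may tube $\Delta$ into an embedded sphere $S$ with $w_2$-nontrivial normal bundle (available because $x$ ordinary forces $N$ nonspin, or one produces such a sphere in a stabilising summand), which shifts the framing obstruction by an odd amount and thus allows it to be made even regardless of its initial parity. Hence in case~(1) the obstruction can always be corrected, while in case~(2) it can be corrected exactly when $q_F([\gamma])=0$.

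Having corrected the framing, the surgery is performed and I must verify the two bookkeeping claims: that $\partial F' = K$ and that $[F'] = x \oplus 0$. The boundary is unchanged because all modifications occur in the interior of $N_r$ away from $\partial N = S^3$. For the homology class, surgery along a curve that bounds a disc disjoint from the rest of $F$ does not change the fundamental class, and tubing into spheres or Whitney moves take place in the new $S^2 \times S^2$ summands, contributing only to the $0$ factor of $x \oplus 0 \in H_2(N_r, \partial N)$; I would cite the analogous verification in~\cite{LWCommentarii} and Proposition~\ref{prop:NullhomologousStablyRepresented}. \textbf{The main obstacle} I anticipate is step~(iii): identifying $q_F([\gamma])$ with the mod~$2$ framing obstruction rigorously, and in the ordinary case producing the $w_2$-nontrivial sphere with which to tube and thereby toggle the parity. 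Getting the quadratic refinement and the parity count exactly right — tracking how each stabilisation, Whitney move, and tubing operation shifts the framing integer — is the technical heart of the argument, whereas the surgery itself and the homological accounting are comparatively routine once the framing is even.
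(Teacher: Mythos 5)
Your overall architecture (build a disc on $\gamma$, clear intersections with the stable Whitney trick, kill the framing obstruction with Lemma~\ref{lem:modifybyeven}) is the right shape, and differs from the paper's only in how the initial disc is produced (the paper gets a disc already disjoint from $F$ by surgering a nullhomologous pushoff of $\gamma$, via Proposition~\ref{prop:NullhomologousStablyRepresented}, which also handles the $x\oplus 0$ bookkeeping). But your parity accounting has a genuine gap. First, step (ii) cannot be run as stated: Lemma~\ref{lem:whitneytrick} only removes intersection points of $\Delta$ with $F$ that cancel \emph{algebraically}, and there is no reason that $\Delta\cdot F=0$ for your disc. To arrange this you must first perform boundary twists, and each boundary twist changes $W(\Delta)$ by $\pm 1$. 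This coupling is exactly why the invariant is defined as $q_F([\gamma])=W(\Delta)+\Delta\cdot F \bmod 2$ (Remark~\ref{rem:BoundaryTwist}), not the framing parity alone; your assertion that ``$q_F([\gamma])$ measures exactly the mod $2$ framing obstruction'' is only correct after the twists and Whitney moves have been charged to the ledger. This part is repairable and, once repaired, matches the paper.

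The unrepaired gap is the ordinary case. Tubing $\Delta$ into a sphere $S$ changes $W(\Delta)$ by $Q_N([S],[S])$ \emph{and} changes $\Delta\cdot F$ by $Q_N^{\partial}([F],[S])$, so the parity that actually matters changes by $Q_N([S],[S])+Q_N^{\partial}([F],[S])$. Hence a $w_2$-nontrivial sphere is neither sufficient nor necessary. Insufficient: in $(\C P^2\# \overline{\C P^2})^\circ$ with $x=e_1$ (ordinary), tubing into a sphere in the class $e_1$ has odd square but also meets $F$ oddly, so the parity changes by $1+1\equiv 0$ and nothing is gained. Not necessary: in the spin manifold $(S^2\times S^2)^\circ$ with the ordinary class $x=(1,0)$ there is \emph{no} class of odd square at all (so ``$x$ ordinary forces $N$ nonspin'' is false), yet the theorem holds, using the sphere $(0,1)$, which has even square but meets $F$ oddly. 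Your fallback of finding such a sphere ``in a stabilising summand'' can never work: the form of $S^2\times S^2$ is even and those spheres are disjoint from $F$, so they shift the parity by an even amount --- if this move were available, the characteristic case would have no obstruction either, contradicting the Arf condition in Theorem~\ref{thm:StableWithBoundaryIntro}. What is actually needed is an embedded sphere $S\subset N$ with $Q_N([S],[S])+Q_N^{\partial}([F],[S])$ odd; ordinariness of $x$ supplies a class $u\in H_2(N)$ with $Q_N(u,u)+Q_N(x',u)$ odd (where $x'$ is the unique class with $i_*(x')=x$), and one must further arrange $u$ to be primitive and ordinary so that Lemma~\ref{lem:TopologicalWall} represents it by an embedded sphere. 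This is precisely the Claim inside the paper's proof, and it is the step your proposal is missing.
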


We emphasise that this theorem and its proof rely heavily on work of Freedman-Kirby~\cite{Freedman} and Lee-Wilczy\'nski~\cite{LWCommentarii} in the closed case.

Let $F \subset N$ be an embedded surface in a simply-connected $4$-manifold, and let $\gamma \subset F$ be a simple closed curve.
In order to perform surgery on $F$ along $\gamma$ \emph{ambiently}, we need to find an embedded disc $D \subset N$ that satisfies~$\partial D= F \cap D=\gamma$ and such that a certain framing condition, that we now describe, is~satisfied.

\begin{construction}[The framing obstruction $W(D) \in \Z$]
\label{cons:qF}
Let $\gamma \subset F$ be an embedded circle.
As~$H_1(N)=0$, this circle bounds an immersed disc~$D \looparrowright N.$
It can be assumed that the interior of $D$ meets $F$ transversally.
Note that, as $F$ is orientable, the line bundle $\nu(\gamma \subset F)$ is trivial; 
it thus admits a unique trivialisation up to homotopy (because trivialisations of a line bundle over a circle are classified by~$[S^1,O(1)]=\pi_1(O(1))=\pi_1(\{ \pm 1\rbrace)=1$).
Decompose the restriction of the normal $2$-plane bundle~$\nu(D \subset N)$ to $\gamma=\partial D$ as
$$ \nu(D \subset N)|_{\gamma} \cong \nu(\gamma \subset F) \oplus \xi$$
for some line bundle $\xi$ over $\gamma$.
As $\nu (\gamma\subset F)$ is trivial and $\nu(D\subset N)|_\gamma$ is trivial, so $\xi$ must also be trivial.
Thus $\xi$ admits a unique trivialisation up to homotopy.
Combining the framings of $\nu(\gamma \subset F)$ and~$\xi$ we therefore obtain a framing of $\nu(D \subset N)|_{\gamma}$.
Consider the Euler number obstruction to extending this \emph{surface framing} of $\nu(D \subset N)|_{\gamma}$ to a framing of~$\nu(D \subset N)$:
$$W(D) \in H^2(D,\partial D;\Z) \cong \Z.$$
The integer $W(D)$ is measuring the difference between the surface framing of~$\nu(D \subset N)|_{\gamma}$ and the framing of $\nu(D \subset N)|_{\gamma}$ induced by the unique trivialisation of the $2$-plane bundle $ \nu(D \subset N)$.
Observe that if the interior of $D$ is disjoint from $F$, i.e.~$\gamma=\partial D=D \cap F$, then $W(D)$ is precisely the obstruction to performing ambient surgery on~$F$ along~$\gamma$ using~$D$, with the result still an embedded surface.
\end{construction}

%

We now discuss the possibility of eliminating the framing obstruction of a disc $D$, and making its interior disjoint from $F$, at the expense of stabilising the ambient manifold.

\begin{remark}
\label{rem:BoundaryTwist}
A boundary twist of $D$ (see~\cite[p.~15]{FreedmanQuinn}) simultaneously changes both the algebraic intersection number~$D \cdot F$ and $W(D)$ by $\pm 1$; see e.g.~\cite[page 87]{FreedmanKirby}.
Thus we can always perform boundary twists so that one of the two quantities vanishes.
We will see below that, at the expense of stabilising $N$, the Whitney move can be performed (see Lemma~\ref{lem:whitneytrick}), so that if the disc and the surface meet algebraically 0 times, these intersections can be removed. We will also see, again at the expense of stabilising $N$, that the framing obstruction $W(D)$ can always be changed by $\pm 2$ (see Lemma~\ref{lem:modifybyeven}). Thus, if we allow stabilising, the obstruction to doing ambient surgery on $F$ along $\gamma$ to obtain an embedded result is whether or not $W(D)$ and $D \cdot F$ have the same parity. In other words, we need to consider the parity of $W(D)+D \cdot F.$
Given an embedded sphere $S\subset N$, defining $D':=D\# S$, we have 
$$W(D')=W(D)+Q_N([S],[S])\qquad \text{ and } \qquad D' \cdot F=D \cdot F+Q_N^{\partial}([F],[S]),$$
where $Q_N^\partial$ denotes algebraic intersections of relative and absolute classes,
and therefore
$$ W(D')+D' \cdot F=\left( W(D)+D \cdot F \right)+Q_N([S],[S])+Q_N^{\partial}([F],[S]).$$
When $F$ is characteristic $Q_N([S],[S])=Q_N^{\partial}([F],[S])$ mod $2$, and so replacing $D$ by $D'$ cannot change the overall parity. 
Thus in the characteristic case this will lead to a genuine obstruction.
However, when $[F]$ is ordinary, one can hope that the parity of $W(D)+D \cdot F$ can be changed by tubing into an appropriate sphere in $S\subset N$.
 \end{remark}

This obstruction in the characteristic case is captured by the Arf invariant of a certain quadratic form over $\Z_2$, whose definition we now recall.

\begin{proposition}[Matsumoto, Freedman-Kirby]
Let $N$ be a simply-connected $4$-manifold, let~$F \subset N$ be a properly embedded characteristic surface and let $\gamma\subset F$ be an embedded circle.
The assignment
$$q_F(\gamma)=:W(D)+D \cdot F \quad \text{mod } 2$$
gives rise to a well defined quadratic refinement $q_F\colon H_1(F;\Z_2)\to\Z_2$ of the intersection form on~$H_1(F;\Z_2)$.
\end{proposition}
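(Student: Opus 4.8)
The plan is to verify three things in turn: that the residue $W(D)+D\cdot F\in\Z_2$ is independent of the immersed disc $D$ bounding a fixed embedded circle $\gamma$; that it depends only on $[\gamma]\in H_1(F;\Z_2)$; and that the resulting function satisfies $q_F(a+b)=q_F(a)+q_F(b)+a\cdot b$, where $a\cdot b$ is the mod-$2$ intersection pairing on the orientable surface $F$. Independence of $D$ is essentially the content of Remark~\ref{rem:BoundaryTwist}: if $D$ and $D'$ both bound $\gamma$ with the surface framing, I glue them into an immersed sphere $S=D\cup_\gamma\overline{D'}$ in $N$. Since the surface framings agree along $\gamma$, the normal Euler number of $S$ records the difference of framing obstructions while the interior of $S$ meets $F$ in the interior intersections of $D$ and of $\overline{D'}$, so that $Q_N([S],[S])=W(D)-W(D')$ and $Q_N^\partial([F],[S])=D\cdot F-D'\cdot F$. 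Adding,
\[
\bigl(W(D)+D\cdot F\bigr)-\bigl(W(D')+D'\cdot F\bigr)=Q_N([S],[S])+Q_N^\partial([F],[S]),
\]
and since $x=[F]$ is characteristic the defining congruence applied to $[S]\in H_2(N)$ gives $Q_N^\partial([F],[S])\equiv Q_N([S],[S])\pmod{2}$; thus the right-hand side is even and $q_F(\gamma)$ is well defined.

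The geometric heart is the descent to $H_1(F;\Z_2)$ together with the quadratic identity, and I would extract both from one band formula for embedded curves. Let $\gamma_1,\gamma_2\subset F$ be in general position, meeting transversally in $k$ points, and let $\delta$ be the embedded $1$-manifold obtained by smoothing all $k$ intersections inside $F$, so $[\delta]=[\gamma_1]+[\gamma_2]$. Choosing immersed discs $D_1,D_2$ and banding them together along the smoothing bands (pushed off $F$) produces a disc bounding $\delta$ whose framing obstruction is $W(D_1)+W(D_2)$ and whose interior meets $F$ in the points of $D_1$ and $D_2$ plus one forced contribution per smoothed crossing. Tracking parities yields
\[
q_F([\gamma_1]+[\gamma_2])=q_F(\gamma_1)+q_F(\gamma_2)+k\equiv q_F(\gamma_1)+q_F(\gamma_2)+(\gamma_1\cdot\gamma_2)\pmod{2}.
\]
Taking $\gamma_1,\gamma_2$ to represent the same class forces $\gamma_1\cdot\gamma_2=[\gamma_1]\cdot[\gamma_1]=0$ (the intersection form of the orientable $F$ is alternating) and, together with the vanishing of $q_F$ on the trivial class (witnessed by a small circle bounding a disc in $F$), shows $q_F(\gamma_1)=q_F(\gamma_2)$; this is the descent to homology. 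For arbitrary $\gamma_1,\gamma_2$ the same formula is the quadratic refinement property, once one notes that every class in $H_1(F;\Z_2)$ has an embedded representative.

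The step I expect to be the main obstacle is the bookkeeping inside this band formula: confirming that smoothing each transverse point of $\gamma_1\cap\gamma_2$ alters $W+(\,\cdot\,)\cdot F$ by exactly $1$ mod $2$ independently of all choices, and that the bands can be routed so as to introduce no spurious intersections with $F$ or framing changes. This is precisely the calculation of Freedman--Kirby and Matsumoto in the closed case, which I would follow; the one genuinely new feature is that $F$ has nonempty boundary $K$ and $x=[F]$ is a relative class. I would therefore check that the surface framing of $\nu(\gamma\subset F)$, the relative Euler number $W(D)\in H^2(D,\partial D)\cong\Z$, and the relative-to-absolute pairing $Q_N^\partial$ all behave formally as their closed-case analogues, so that those parity computations transfer verbatim.
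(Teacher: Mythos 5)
The paper never proves this proposition: it is quoted as a theorem of Matsumoto and Freedman--Kirby, and the only ingredient the paper records is the parity computation of Remark~\ref{rem:BoundaryTwist}. Your first step (gluing two discs $D,D'$ bounding $\gamma$ into an immersed sphere $S=D\cup_\gamma\overline{D'}$ and invoking the characteristic condition) reproduces exactly that remark and is correct as a sketch, modulo two points you gloss over: the identities $Q_N([S],[S])=W(D)-W(D')$ and $Q_N^{\partial}([F],[S])=D\cdot F-D'\cdot F$ hold only mod $2$ (double points of the immersed sphere shift the homological square by even amounts), and $S$ meets $F$ along the entire circle $\gamma$ rather than transversally, so one must perturb $S$ off $F$ near $\gamma$ and use the fact that both discs carry the surface framing there to see that no uncontrolled intersection points appear. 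Both issues are standard and handled in the classical references, so this part is fine.

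The genuine gap is in your descent to homology. Your band formula gives, for homologous embedded curves $\gamma_1,\gamma_2$ with smoothing $\delta$, the identity $q_F(\delta)=q_F(\gamma_1)+q_F(\gamma_2)$; to conclude $q_F(\gamma_1)=q_F(\gamma_2)$ you need $q_F(\delta)=0$. But $\delta$ is an arbitrary embedded (possibly multi-component) curve that is merely nullhomologous in $F$, while your only witness for ``vanishing on the trivial class'' is a small circle bounding a disc in $F$. This is circular: the assertion that $q_F$ takes the same value on \emph{all} embedded representatives of the zero class is precisely an instance of the descent statement you are trying to establish, and it does not follow from the band formula together with one preferred representative. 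The missing ingredient is the separate lemma that $q_F$ vanishes on every embedded curve bounding a subsurface $\Sigma\subset F$. The classical proof of that lemma pushes $\Sigma$ off $F$ rel boundary and uses it as a membrane for the curve, which in turn forces one to first enlarge the definition of $q_F$ to allow arbitrary compact orientable membranes rather than only discs; the membrane-independence of the enlarged definition is again a gluing-plus-characteristic argument, now applied to closed surfaces of positive genus (for which $Q_N([S],[S])\equiv Q_N^{\partial}([F],[S])$ mod $2$ still holds). Without this lemma the homology-invariance of $q_F$ is unproven; with it, your well-definedness step and band formula do assemble into the Freedman--Kirby/Matsumoto argument you intend to follow.
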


\subsection{Reduction of Theorem~\ref{thm:StableWithBoundaryIntro} to the ambient surgery criteria}

The goal of this section is to prove Theorem~\ref{thm:StableWithBoundaryIntro}, assuming Theorem~\ref{thm:AmbientSurgeryCriterion} is true.

\begin{construction}[The framing $\fr_D$ of the boundary of an immersed disc $D \looparrowright N$]\label{constr:frD}
Given an immersed disc $D \looparrowright N$, the normal bundle $\nu(D)$ is trivial and,  up to homotopy, admits a unique framing compatible with its orientation.
This framing induces a framing of the rank~$2$ subbundle~$\nu(D)|_{\partial D}\subset \nu(\partial D \subset N)$.
This latter framing can be completed to a framing $\fr_D $ of the rank $3$ bundle~$\nu(\partial D \subset N)$ using the outwards normal in first position.
\end{construction}

The following result describes how a loop in $N$ can be framed so that the effect of surgery on it is a stabilisation of $N$ by $S^2 \times S^2$. 
In the presence of a surface $F \subset N$ representing a homology class $x$,  this result also provides a condition ensuring that $F$ represents $x \oplus 0$ in the stabilisation.

\begin{proposition}
\label{prop:NullhomologousStablyRepresented}
Let~$N$ be a simply-connected
$4$-manifold.
\begin{itemize}
\item Suppose $\alpha\subset N$ is an embedded loop that bounds an immersed disc $D$. Then surgery on~$N$ along $\alpha$ using the framing $\fr_{D}$ is homeomorphic to $N\#(S^2\times S^2)$.
\item Let~$F \subset N$ be a properly embedded surface representing a homology class $x \in H_2(N,\partial N)$, suppose that~$\alpha\cap F=\emptyset$ and that $\alpha \subset N \setminus F$ is nullhomologous.
Then a disc $D\subset N$ (and thus the framing $\fr_{D}$) may be chosen so that~$F$ represents~$x \oplus 0 \in H_2(N \# S^2 \times S^2, \partial N)$ in the effect of surgery.
\end{itemize}
\end{proposition}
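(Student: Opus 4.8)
The plan is to handle the two bullets in turn, using throughout that $\alpha$ is nullhomotopic since $N$ is simply-connected. For the first bullet, nullhomotopy of $\alpha$ means that surgery on it yields $N$ connect-summed with an $S^2$-bundle over $S^2$, so with either $S^2\times S^2$ or $S^2\widetilde{\times}S^2$; the whole point is to rule out the twisted bundle. I would identify the new summand geometrically: the belt sphere $A$ of the surgery is embedded with $A\cdot A=0$, and capping the immersed disc $D$ off by a fibre disc of the reglued $D^2\times S^2$ produces a sphere $B$ with $A\cdot B=1$ whose only double points are those of $D$. Since $A\cdot B=1$ and $A\cdot A=0$, the Norman trick removes these double points, and the resulting pair of geometrically dual embedded spheres splits off an $S^2$-bundle summand which is untwisted exactly when $[B]\cdot[B]$ is even. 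If $N$ is spin every class has even square, so the summand is genuinely $S^2\times S^2$; if $N$ is not spin the parity may be odd, but then the standard homeomorphism $N\#(S^2\widetilde{\times}S^2)\cong N\#(S^2\times S^2)$, available because $N$ has an odd class, finishes the job.

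For the second bullet I must do better than ``up to homeomorphism'', because the nonspin absorption can shear the odd part of $x$ into the new summand and so need not preserve the class $x\oplus 0$; instead I would choose $D$ so that the surgery is \emph{genuinely} untwisted and $F\cdot D=0$. First I arrange $F\cdot D=0$ using the homological hypothesis: since $\alpha$ bounds a $2$-chain $G$ in $N\setminus F$, any disc $D_0$ with $\partial D_0=\alpha$ gives a cycle $D_0-G$ representing a class $z\in H_2(N)$ with $F\cdot D_0=x\cdot z$; as $N$ is simply-connected $\pi_2(N)=H_2(N)$, so tubing $D_0$ into an immersed sphere representing $-z$ yields a disc $D$ with $F\cdot D=0$.

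It remains to make the summand genuinely untwisted without disturbing $F\cdot D=0$, and here is the key observation. Writing $c$ for the $H_2(N)$-component of the dual sphere $B$, the condition $F\cdot D=0$ reads $x\cdot c=0$, i.e.\ $c\in x^\perp$, and one computes $[B]\cdot[B]\equiv c\cdot c \bmod 2$. If $x$ is characteristic then $x^\perp$ is even, so $c\cdot c$ is automatically even and the summand is $S^2\times S^2$. If $x$ is ordinary and $c\cdot c$ happens to be odd, then $c$ is itself an odd class in $x^\perp$, and tubing $D$ into a sphere representing $c$ keeps $F\cdot D=0$ (as $Q_N^\partial([F],c)=x\cdot c=0$) while flipping the parity through $Q_N(c,c)$. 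Either way the surgered manifold is genuinely $N\#(S^2\times S^2)$; as $F$ is disjoint from the surgery region we get $[F]\cdot A=0$, and $[F]\cdot B=F\cdot D=0$, which together say $[F]=x\oplus 0$.

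The main obstacle throughout is precisely the ``no $S^2\widetilde{\times}S^2$'' verification: distinguishing the two $S^2$-bundles and, in the second bullet, securing the untwisted one without the absorption homeomorphism that would spoil $x\oplus 0$. The decisive point that makes this tractable is that forcing $F\cdot D=0$ confines the parity-governing class $c$ to $x^\perp$, after which the characteristic and ordinary cases each close up. I expect the bookkeeping of this parity, together with the standard splitting and Norman-trick input of Freedman--Kirby and Lee--Wilczy\'nski, to be where the real work lies.
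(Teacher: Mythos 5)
There is a genuine gap, and it sits exactly at the point the proposition is about: nowhere in your argument do you use the defining property of the framing $\fr_{D}$, namely that it is induced by the (unique up to homotopy) trivialisation of $\nu(D)$, so that the surface framing along $\partial D$ extends across the disc. Your criterion ``the summand is untwisted iff $[B]\cdot[B]$ is even'' is correct, and with the framing $\fr_D$ evenness comes for free: the cap $D^2\times\{p\}$ carries the product framing, the gluing by $\fr_D$ matches it with the disc framing on $D'$, so the normal Euler number of $B$ vanishes and $[B]\cdot[B]$ is twice the signed count of double points. But your verification of evenness appeals instead to the intersection form of $N$ (``if $N$ is spin every class has even square''; ``$[B]\cdot[B]\equiv c\cdot c \bmod 2$'' for $c$ the $H_2(N)$-component of $[B]$). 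This cannot work: $[B]$ is a class in the surgered manifold, not in $N$, and the parity of $[B]\cdot[B]$ is precisely the framing invariant of the surgery---it is not determined by $Q_N$ nor by $c$. Indeed, your spin-case argument never mentions the framing, so it would apply verbatim to surgery with the \emph{other} framing and ``prove'' that it too yields $N\#(S^2\times S^2)$; this is false, since the wrong framing produces $N\#(S^2\widetilde{\times}S^2)$, which is non-spin and hence not homeomorphic to $N\#(S^2\times S^2)$ when $N$ is spin. The congruence $[B]\cdot[B]\equiv c\cdot c$ is circular for the same reason: it presupposes that the summand spanned by $A$ and $B$ is even, i.e.\ that the surgery is untwisted, which is what you are trying to prove. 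Consequently the characteristic/ordinary case division in your second bullet addresses a problem that does not exist (with $\fr_D$ the parity is always even) and rests on an identity you cannot establish without the framing.

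For comparison, the paper's proof makes the framing do the work and also avoids a second point you gloss over. It makes $D$ embedded by finger moves (which preserve $\fr_D$), uses $\fr_D$ to identify $\overline{\nu}(\alpha)\cong S^1\times D^3$ so that $\partial D'=S^1\times\{p\}$, and observes that $\overline{\nu}(\alpha)\cup\overline{\nu}(D')$ is a $4$-ball: this exhibits a model $N\cong N\# S^4$ in which the surgery is supported in the punctured $S^4$, where replacing $S^1\times D^3$ by $D^2\times S^2$ visibly produces a punctured $S^2\times S^2$. That explicit model also gives the second bullet immediately: the identification is the identity outside the $4$-ball, so $F$ is untouched and $[F]\cdot A=0$, $[F]\cdot B=F\cdot D'=0$ force $[F]=x\oplus 0$. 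In your approach one would additionally have to identify the complement of a regular neighbourhood of $A\cup B'$ with $N$ by an abstract (Freedman-type) homeomorphism carrying $[F]$ to $x$---a step you do not address. On the positive side, your construction of $D$ with $F\cdot D=0$, by tubing $D_0$ into an immersed sphere representing $-z$, is correct and is a reasonable alternative to the paper's construction via ambient surgery on a surface $\Sigma\subset N\setminus F$; but the untwistedness mechanism must come from $\fr_D$, not from the algebra of $Q_N$.
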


\begin{proof}
Make the disc~$D$ embedded by using finger moves on~$D$ to push self-intersections of~$D$ past the boundary~$\partial D$; see e.g.~\cite[Figure 11.4]{DETBook}.
Finger moves on a disc do not change the framing, relative to a given boundary framing, so the framing~$\fr_{D}$ is unchanged. 
We can assume that a tubular neighbourhood~$\overline{\nu}(\alpha)$ has been chosen small enough so that~$\overline{\nu}(\alpha)\cap D$ is a closed collar of $\partial D$.
As~$D$ is embedded, so is the disc~$D':=D\setminus\nu(\alpha)$. 
Note that $\partial D'=D' \cap \overline{\nu}(\alpha)$. We set $\alpha':=\partial D'$.

The idea for showing the effect of surgery along $\alpha$ is $N\#S^2\times S^2$ is to use the framing $\fr_{D}$ to work in a preferred model of~$N \cong N \# S^4$ with~$\alpha \subset S^4$ in which it will be clear that surgery along~$\alpha$ results in a~$S^2 \times S^2$ summand instead of an~$S^2 \widetilde{\times} S^2$ summand. 
Use~$\fr_{D}$ to frame~$\alpha$ and thus identify~$\overline{\nu}(\alpha)=S^1\times D^3$.
Since~$\alpha'=\partial\overline{\nu}(\alpha) \cap  D'$,
by definition of~$\fr_{D}$, we have that~$\alpha'$ is a pushoff using the framing~$\fr_{D}$. We have hence identified~$\alpha'=S^1\times\{p\}$ for some~$p\in S^2$.
Consequently, we obtain an identification~$\overline{\nu}(\alpha)\cup \overline{\nu}(D') =S^1\times D^3\cup D^2\times D^2$,
the effect of attaching a~$4$-dimensional~$2$-handle to~$\partial(S^1\times D^3)=S^1\times S^2$ along the curve~$\alpha'=S^1\times\{p\}$.
Note this effect is homeomorphic to~$D^4$ and so we obtain a particular choice of homeomorphism~$\phi'_{D'}\colon N\# S^4\cong N$.
Excising~$S^1\times \mathring{D^3}$ and gluing back~$D^2\times S^2$ via the identity map is precisely doing surgery on~$N$ along~$\alpha$ with the given framing~$\fr_{D}$. In the punctured~$S^4$ summand, we obtain~$D^2\times S^2\cup D^2\times D^2=S^2\times S^2\sm (\mathring{D}^2\times \mathring{D}^2)$,  which is a punctured~$S^2\times S^2$.
Hence we obtain a homeomorphism~$\phi_{D'}\colon N\#(S^2\times S^2)\cong N'$. This completes the proof of the first item. For later on, we observe that, by construction, the sphere~$S^2\times\{p\}$ is~$D'\cup_{\alpha'} \Delta$, where~$\Delta:=D^2\times\{p\}\subset D^2\times S^2$ is a core of surgery.

We discuss the strategy for the second item before going into the details. If $\alpha$ is endowed with any framing $\fr_{D}$, as in the first item, then in the effect of surgery,~$F$ intersects~$\lbrace \operatorname{pt} \rbrace \times S^2$ zero times geometrically.
Thus the proof would be complete if the framing $\fr_{D}$ was such as to guarantee that a homology representative of~$[S^2 \times \lbrace \operatorname{pt} \rbrace]$ intersects~$F$ algebraically zero times.
For this,  we will construct $D$ with the additional property that the resulting~$D'$ intersects~$F$ algebraically $0$ times. 
The embedded sphere~$S=D'\cup\Delta$, where~$\Delta$ is a core of surgery, represents~$S^2 \times \lbrace \operatorname{pt} \rbrace$. 
As~$\Delta$ is a core of surgery, it does not intersect~$F$ (geometrically). 
Thus we will be able to conclude that~$S\cdot F=0$.

We construct the disc~$D \subset  N$ by first performing ambient surgery on a surface with boundary~$\alpha$; here are the details.
As~$\alpha$ is nullhomologous in~$N\sm F$,  choose a surface $\Sigma\subset N\sm F$ with~$\partial \Sigma=~\alpha$.
Since~$N$ is simply-connected, loops representing generators of~$H_1(\Sigma)$ bound immersed discs in~$N$; note that these discs might intersect~$F$, but by general position, and taking the tubular neighbourhood small, we may assume they do not intersect~$\overline{\nu}(\alpha)$.
By performing boundary twists on the immersed discs, we arrange that their framing obstructions vanish; recall Construction~\ref{cons:qF} and Remark~\ref{rem:BoundaryTwist}.
Performing ambient surgery on~$\Sigma$ using these discs leads to an immersed disc~$D\subset N$ with~$\partial D=\alpha$. 
The immersed disc~$D$ intersects~$F$ zero times algebraically:
upon performing surgery on~$\Sigma$, the discs that intersected~$F$ are replaced by pairs of discs that
intersect~$F$ in cancelling signs. Use finger moves on~$D$ to push self-intersections of the disc across the boundary~$\alpha$, thus making an embedded disc, still called~$D$.

We now have an embedded disc~$D$ with boundary~$\alpha$, intersecting~$F$ algebraically $0$ times. 
As before, by taking~$\overline{\nu}(\alpha)$ small enough, we may assume~$\overline{\nu}(\alpha)\cap D$ is a small closed collar of~$\partial D$. 
As before, remove the (open) collar, to obtain~$D':=D\setminus \nu(\alpha)$. As desired,~$D'$ intersects~$F$ algebraically~$0$ times. 
This completes the proof.
\end{proof}

We conclude this section by explaining how Theorem~\ref{thm:StableWithBoundaryIntro} follows from the ambient surgery criteria.

\begin{proof}[Proof of  Theorem~\ref{thm:StableWithBoundaryIntro} assuming Theorem~\ref{thm:AmbientSurgeryCriterion}.]
We claim that it suffices to prove that~$x$ is representable by a disc with boundary~$K$ if~$x$ is ordinary or if~$x$ is characteristic and~\eqref{eq:ArfConditionmain} holds, and by a punctured torus with boundary~$K$ otherwise.
First,  observe that trivial stabilisations of a surface allow us to raise the genus without affecting the homology class.
The claim will then follow once we prove that if a homology class is represented by a surface~$F \subset N$,  then it is stably represented by a simple surface of the same genus.
Represent normal generators of the commutator subgroup of~$\pi_1(N\setminus F)$ by simple closed curves.
Endow these curves with the framing specified by the second item of 
Proposition~\ref{prop:NullhomologousStablyRepresented}. This proposition then ensures that after surgery along these curves,~$x$ is now stably represented by a simple surface of the same genus as~$F$
(noting, in order to use~Proposition~\ref{prop:NullhomologousStablyRepresented}, that curves in the commutator subgroup are nullhomologous).
This concludes the proof of the claim.

We now use Theorem~\ref{thm:AmbientSurgeryCriterion} to prove the assertion that~$x$ is representable by a disc with boundary~$K$ if~$x$ is ordinary or if~$x$ is characteristic and~\eqref{eq:ArfConditionmain} holds, and by a punctured torus with boundary~$K$ otherwise.
Represent~$x \in H_2(N,\partial N)$ by a surface~$F$ with boundary~$K$.
Note that~\cite[Theorem 2]{Klug} implies that if~$x$ is characteristic, then
$$\operatorname{Arf}(K)+\ks(N)+\frac{1}{8}(\sigma(N)-x \cdot x)=\operatorname{Arf}(q_F) \in\Z/2.$$
Represent half of a symplectic basis of generators of~$H_1(F)$ by loops~$\gamma_1,\ldots,\gamma_r$ such that the~$\gamma_i$ are pairwise disjoint (this is possible for any algebraic symplectic basis; see e.g.~\cite[Third proof of Theorem~6.4]{FarbMargalit}).
\color{black}
If~$x$ is characteristic,  one can additionally assume that~$q_F(\gamma_i)=0$ for~$i=1,\ldots,r$ when~$\operatorname{Arf}(q_F)=0$ and~$q_F(\gamma_i)=0$ for~$i=1,\ldots,r-1$ when~$\operatorname{Arf}(q_F)=1$; see~\cite[Prop III.1.7, p.~55]{BrowderSurgery}.
Theorem~\ref{thm:AmbientSurgeryCriterion} ensures one can stably perform ambient surgery on~$\gamma_1$ in order to obtain a surface~$F_1$ stably representing~$x$ with~$g(F_1)=g(F)-1$.
Note that since the~$\gamma_i$ are pairwise disjoint,~$\gamma_2,\ldots,\gamma_r$ persist in~$F_1$ and satisfy~$q_{F_1}([\gamma_i])=q_{F}([\gamma_i])$.
The assertion now follows by induction and thus so does the theorem.
\end{proof}

\subsection{Preliminary lemmas before proof of the ambient surgery criteria}

Having reduced Theorem~\ref{thm:StableWithBoundaryIntro} to Theorem~\ref{thm:AmbientSurgeryCriterion}, we prove some preliminary lemmas before completing the proof of the latter.
The overall strategy is inspired by~\cite[Proof of Theorem 2.1]{LWCommentarii}.

\medbreak

The next lemma would also follow directly from~\cite[Theorem 1.1]{LWCommentarii}, but we include an independent proof, as our argument (inspired by~\cite[Proof of Corollary 1.15]{KasprowskiPowellRayTeichnerEmbedding}) is short.

\begin{lemma}
\label{lem:TopologicalWall}
Let~$N$ be a simply-connected~$4$-manifold whose boundary is either empty or a homology sphere.
Any ordinary primitive class~$x \in H_2(N)$ is
 representable by an embedded sphere.
\end{lemma}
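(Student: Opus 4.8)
The plan is to represent $x$ by an immersed $2$-sphere equipped with a geometrically transverse dual sphere, and then to appeal to the sphere embedding theorem of Freedman--Quinn, whose good-fundamental-group hypothesis holds since $\pi_1(N)=1$. First I would record that, because $\partial N$ is empty or a homology sphere, $H_1(\partial N)=0$ and Poincar\'e--Lefschetz duality makes the intersection form $Q_N$ on the free abelian group $H_2(N)$ unimodular; as $x$ is primitive there is thus a class $y\in H_2(N)$ with $Q_N(x,y)=1$. Since $N$ is simply connected, the Hurewicz theorem gives $\pi_2(N)\cong H_2(N)$, so $x$ and $y$ are represented by maps $S^2\to N$, which by general position I make into generic immersions $S,T\looparrowright N$ with only transverse double points. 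Piping away the algebraically cancelling points of $S\cap T$ by the Norman trick (at the cost of harmless self-intersections of $T$) arranges $T$ to be a geometrically transverse sphere meeting $S$ in a single point.

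Next I would normalise the self-intersection number: an interior cusp homotopy changes the algebraic self-intersection $\mu(S)\in\Z$ by $\pm 1$ while fixing $[S]=x$, so I can arrange $\mu(S)=0$; the identity $x\cdot x=e(\nu S)+2\mu(S)$ then forces the normal Euler number to equal $x\cdot x$, as it must for any embedded representative. With $\mu(S)=0$ and the geometric dual $T$ in hand, the sphere embedding theorem applies, and the only remaining obstruction to replacing $S$ by an embedded sphere within its homotopy class --- hence still representing $x$ --- is the Kervaire--Milnor invariant $\mathrm{km}(S)\in\Z/2$.

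The hard part, and the one place the hypothesis that $x$ is \emph{ordinary} is used, is to arrange $\mathrm{km}(S)=0$. Recall that $\mathrm{km}(S)$ is the $\Z/2$ Arf-type invariant assembled from the framing obstructions of the Whitney discs pairing the double points of $S$, their intersections with $S$ being absorbed into the dual $T$. The relevant freedom is that one may tube such a Whitney disc over an immersed sphere $R$ representing a class $a\in H_2(N)$; accounting both for the self-intersections of $R$ and for its $a\cdot x$ intersections with $S$, a standard computation shows this modification preserves $[S]=x$ and $\mu(S)=0$ while changing $\mathrm{km}(S)$ by $a\cdot a+a\cdot x\bmod 2$. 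This is precisely the absolute, closed-surface incarnation of the parity computation in Remark~\ref{rem:BoundaryTwist}. Now $x$ ordinary means $x$ is not characteristic, so by definition there exists $a\in H_2(N)$ with $a\cdot a\not\equiv a\cdot x\pmod 2$, i.e.\ with $a\cdot a+a\cdot x$ odd; choosing this $a$ flips $\mathrm{km}(S)$, and I can therefore force $\mathrm{km}(S)=0$. (For characteristic $x$ every class satisfies $a\cdot a\equiv a\cdot x$, so this parity can never be changed and $\mathrm{km}(S)$ becomes a genuine obstruction, in agreement with the characteristic case appearing elsewhere in the paper.)

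With $\mu(S)=0$, a geometrically transverse dual sphere, good fundamental group, and $\mathrm{km}(S)=0$, the sphere embedding theorem yields an embedded sphere homotopic to $S$ and hence representing $x$, as required. The step requiring the most care is the third one: verifying that the $\mathrm{km}$-changing modification can be carried out keeping both the homology class and $\mu(S)=0$ fixed, and that the induced intersections of the Whitney disc with $S$ are correctly cancelled against the dual $T$; the bookkeeping here is exactly that of Remark~\ref{rem:BoundaryTwist}.
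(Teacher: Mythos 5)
Your argument is correct, but it follows a genuinely different route from the paper's. The paper's proof is shorter and leans on a different citation: it represents $x$ by an immersed sphere $S$ with $\mu(S)=0$ via cusps, observes that ordinariness makes $S$ not $s$-characteristic and that primitivity (with simple connectivity) makes $S$ $\pi_1$-negligible, and then invokes Freedman--Quinn's Theorem~10.5 together with Stong's correction, which directly says such a sphere is homotopic to an embedding; no dual spheres are built and the Kervaire--Milnor invariant never appears explicitly, its vanishing being absorbed into the cited criterion. You instead run the sphere-embedding-theorem-with-geometric-duals package: unimodularity of $Q_N$ (this is where the boundary hypothesis enters --- and, in fact, it also enters the paper's proof, hidden inside the $\pi_1$-negligibility claim) plus primitivity gives an algebraic dual, you upgrade it to a geometric dual, and then you kill $\mathrm{km}(S)$ by the tubing computation, which is precisely the standard proof that $\mathrm{km}$ vanishes for non-$s$-characteristic spheres. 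Two repairs are needed to make your write-up precise. First, the step producing the geometric dual is not the Norman trick: tubing $T$ into a parallel copy of $S$ would change $[T]$ by $\pm x$, and tubing along an arc in $S$ would raise the genus of $T$. The correct tool is the geometric Casson lemma (finger moves), which removes the algebraically cancelling points of $S\cap T$ at the cost of self-intersections of \emph{both} $S$ and $T$; it applies because Whitney circles are nullhomotopic in the simply-connected $N$, and it preserves $\mu(S)=0$ since it is a regular homotopy. Second, the embedding criterion you quote --- geometric dual, good group, $\mu=0$, $\mathrm{km}=0$ imply regularly homotopic to an embedding --- is the formulation of Kasprowski--Powell--Ray--Teichner (a reference the paper uses elsewhere), not of Freedman--Quinn's sphere embedding theorem, whose hypotheses place algebraic constraints on the dual spheres that your $T$ (with $[T]\cdot[T]$ possibly odd) need not satisfy; with the citation repaired the argument closes. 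What your route buys is independence from the $\pi_1$-negligibility claim and from Stong's correction; what the paper's route buys is brevity, since the dual-sphere construction and the $\mathrm{km}$ bookkeeping are packaged inside the Freedman--Quinn/Stong statement.
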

\begin{proof}
Represent~$x \in H_2(N) \cong \pi_2(N)$ by an immersed sphere.
Add local cusps to this sphere so that the outcome, denoted~$S$, satisfies~$\mu(S)=0$; cusp moves are homotopies so~$S$ still represents~$x$.
Here,~$\mu(S) \in \Z$ denotes the signed counted of self-intersections of~$S$.
Since~$x \in H_2(N)$ is ordinary,  there exists an~$a \in H_2(N)$ with~$Q_N(x,a) \neq Q_N(a,a)$ mod~$2$,  from which it follows that~$S$ is not~$s$-characteristic (here, a properly immersed surface~$F$ in a~$4$-manifold~$M$ is \emph{$s$-characteristic} if~$Q_M^{\partial}([F],a) = Q_M(a,a)$ for every~$a \in \im(\pi_2(M) \to H_2(M) \to H_2(M;\Z_2))$; see e.g.~\cite[Definition 5.2]{KasprowskiPowellRayTeichnerEmbedding}).
Since~$x$ primitive and~$N$ is simply-connected,~$S$ is~$\pi_1$-negligible (meaning that~$\pi_1(N\setminus S) \to \pi_1(N)$ is an isomorphism) and we can therefore apply Stong's correction~\cite[Correction]{StongCorrection105} to~\cite[Theorem~10.5]{FreedmanQuinn} to deduce that~$S$ is homotopic to an embedding~$S'$ with~$x=[S]=[S'].$
\end{proof}

Next,  we recall the results mentioned in Remark~\ref{rem:BoundaryTwist}, which allow us to modify the framing obstruction and perform the Whitney trick, but in both cases at the expense of stabilising the ambient manifold.

\begin{lemma}[Stably modifying the framing obstruction by an even number]
\label{lem:modifybyeven}
Let~$N$ be a simply-connected $4$-manifold.
Suppose~$D\subset N$ is a generically immersed disc with embedded boundary~$\partial D\subset F$ and~$W(D)=e\in\Z$. 
Let~$f\in 2\Z$ be any even integer. Then there exists a generically immersed disc~$D'\subset N\#S^2\times S^2$, agreeing with~$D$ outside of~$S^2\times S^2$, and with~$W(D')=e+f$.
\end{lemma}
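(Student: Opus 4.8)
The plan is to stabilise $N$ by a single copy of $S^2 \times S^2$ and to modify the immersed disc $D$ inside this new handle, using one of the two spherical generators of $H_2(S^2 \times S^2)$ to change the framing obstruction while leaving both $\partial D$ and the part of $D$ outside the handle untouched. Recall from Remark~\ref{rem:BoundaryTwist} that tubing $D$ into an embedded sphere $S$ changes $W(D)$ by $Q_N([S],[S])$. Since $f \in 2\Z$, it suffices to exhibit, inside $S^2 \times S^2$, an embedded sphere $S$ with $Q([S],[S]) = \pm 2$ that we can tube into $D$; iterating $|f|/2$ times (with the appropriate sign) then produces the required shift by $f$.

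First I would record that $H_2(S^2 \times S^2)$ has a hyperbolic intersection form with basis $a = [S^2 \times \{\mathrm{pt}\}]$ and $b = [\{\mathrm{pt}\} \times S^2]$, so $Q(a,a) = Q(b,b) = 0$ and $Q(a,b) = 1$. The diagonal class $a + b$ is represented by an embedded sphere (e.g.\ the graph of a degree-one map, or $S^2$ embedded as a diagonal), and it satisfies $Q(a+b, a+b) = 2$; likewise $a - b$ is represented by an embedded sphere of square $-2$. Thus I have embedded spheres of square $\pm 2$ sitting in the $S^2 \times S^2$ summand, disjoint from $D$ since they live in a collar region away from $D$ after stabilising (the connect sum is performed in a ball meeting $D$ in a small arc, and the spheres can be pushed off this ball).

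Next I would perform the tubing: choose a framed arc in $N \# S^2 \times S^2$ from a point of $\mathrm{int}(D)$ to the chosen square-$(\pm 2)$ sphere $S$, avoiding $\partial D$, and form the interior connected sum $D' := D \# S$ along this arc. By Remark~\ref{rem:BoundaryTwist}, this changes the framing obstruction by $Q([S],[S]) = \pm 2$ and leaves $D'$ agreeing with $D$ outside the $S^2 \times S^2$ summand, with $\partial D' = \partial D \subset F$ unchanged. Choosing the sign of the square and iterating $|f|/2$ times (all tubes using disjoint arcs and disjoint parallel copies of $S$, which exist since $S$ has a section of its normal bundle) yields a generically immersed disc $D'$ with $W(D') = e + f$, as required. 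The disc remains generically immersed because tubing introduces no new singularities beyond those controllable by general position.

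The main obstacle to watch is bookkeeping about what $W$ measures and confirming that the change-of-framing formula $W(D \# S) = W(D) + Q([S],[S])$ of Remark~\ref{rem:BoundaryTwist} applies verbatim when $S$ lies in the stabilising summand rather than in $N$ itself. This is really a local statement: $W(D)$ is defined via the surface framing on $\nu(D \subset N)|_{\partial D}$ and the Euler obstruction to extending it (Construction~\ref{cons:qF}), and both the surface framing and the boundary of $D$ are untouched by the interior tubing, so the only effect is the self-intersection contribution of the attached sphere. I would verify that the interior intersections $D' \cap F$ are unaffected (they are, since $S \subset S^2 \times S^2$ is disjoint from $F \subset N$), so that only $W$, and not $D \cdot F$, changes — this is precisely the independent control the lemma is meant to provide, to be combined later with Lemma~\ref{lem:whitneytrick}.
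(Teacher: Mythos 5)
Your overall strategy---stabilise once and tube $D$ into an embedded sphere of even square lying in the new $S^2\times S^2$ summand, invoking Remark~\ref{rem:BoundaryTwist} to see that only $W$ changes while $\partial D$, the intersections $D\cdot F$, and the part of $D$ outside the summand are untouched---is exactly the classical argument that the paper cites in place of a proof (Freedman--Kirby, Lee--Wilczy\'nski, Scorpan), and for $f=\pm 2$ your proof is complete. The gap is in the iteration step for $|f|>2$: you claim one can tube into $|f|/2$ ``disjoint parallel copies of $S$, which exist since $S$ has a section of its normal bundle.'' This is false. Your sphere $S$ represents $a\pm b$ and has $Q([S],[S])=\pm 2$, so its normal bundle has Euler number $\pm 2$ and admits no nowhere-vanishing section; equivalently, any two surfaces representing $[S]$ have algebraic intersection number $\pm 2\neq 0$ and hence can never be made disjoint. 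So after the first tubing, every further copy of $S$ must meet the already-modified disc, and the construction as written does not go through. (If one accepts these intersection points as new double points of the disc, the framing count still works and the literal statement survives, but the disc acquires self-intersections even when $D$ was embedded---which would break the paper's application of the lemma in the proof of Theorem~\ref{thm:AmbientSurgeryCriterion}, where the disc must remain embedded to perform ambient surgery.)

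The standard repair keeps your approach but replaces the iteration by a single tube: for $f=2n$, take $S$ to be the graph $\{(x,g(x))\}\subset S^2\times S^2$ of a degree-$n$ map $g\colon S^2\to S^2$. This is an embedded sphere in the class $a+nb$, hence of square $2n=f$; it misses the connected-sum ball, $F$, and $D$, and one application of your tubing argument (still justified by Remark~\ref{rem:BoundaryTwist} applied in the ambient manifold $N\# S^2\times S^2$) gives $W(D')=e+f$ inside a single stabilisation, with $D'$ embedded whenever $D$ is. With that one substitution your proof is correct and agrees with the argument in the cited sources.
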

\begin{proof}
The proof can be found on~\cite[bottom of page 292]{LWCommentarii}, but goes back to~\cite[page 93]{FreedmanKirby}; see also~\cite[page 150]{Scorpan}.
\end{proof}

\begin{lemma}[Stable Whitney Trick]\label{lem:whitneytrick}
Let $N$ be a simply-connected $4$-manifold.
 Suppose~$S\subset N$ is an immersed surface and that there exist oppositely signed self-intersection points~$p,q\in S$ paired by a Whitney circle.
Then, in some~$N\#^k(S^2\times S^2)$, there is a framed embedded Whitney disc pairing~$p$ and~$q$.
\end{lemma}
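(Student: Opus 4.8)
The plan is to build an immersed Whitney disc and then remove, one defect at a time, each obstruction to its being a framed embedded Whitney disc, paying one $S^2\times S^2$ summand for each. Denote by $\gamma$ the given Whitney circle pairing $p$ and $q$. Since $N$ is simply-connected, $\gamma$ is nullhomotopic and hence bounds a generically immersed disc $\Delta\looparrowright N$ with $\partial\Delta=\gamma$ and interior meeting $S$ transversally. The disc $\Delta$ can fail to be the desired Whitney disc in three ways: it may have self-intersections; its interior may meet $S$; and its Whitney framing obstruction $W(\Delta)\in\Z$ (Construction~\ref{cons:qF}) may be nonzero. I would dispose of these in turn.

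If $W(\Delta)$ is odd, I first perform a single boundary twist (Remark~\ref{rem:BoundaryTwist}), which changes $W(\Delta)$ by $\pm1$ at the cost of one interior intersection with $S$, so that afterwards $W(\Delta)$ is even. Next I remove the self-intersections of $\Delta$ by finger moves pushing them across the boundary $\gamma$, exactly as in the proof of Proposition~\ref{prop:NullhomologousStablyRepresented}; this requires no stabilisation and leaves $\Delta$ embedded. To eliminate an interior intersection point $z\in\mathring{\Delta}\cap S$, I stabilise by a single copy of $S^2\times S^2$ placed near $z$ and apply the tubing (Norman) trick, tubing $\Delta$ to one of the two dual spheres $A,B$ of the new summand so as to absorb $z$. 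The essential feature is that in $S^2\times S^2$ one has $A\cdot A=B\cdot B=0$ and $A\cdot B=1$; consequently the tube removes $z$ without creating a new self-intersection and without altering $W(\Delta)$. It is precisely here that the summand must be $S^2\times S^2$ rather than the twisted bundle $S^2\widetilde{\times}S^2$, whose relevant sphere has odd square and would spoil the framing. Iterating over the finitely many points of $\mathring{\Delta}\cap S$ produces, in some $N\#^k(S^2\times S^2)$, an embedded disc with interior disjoint from $S$ and with $W(\Delta)$ still even.

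Finally, the remaining even framing discrepancy is killed using Lemma~\ref{lem:modifybyeven}, which changes $W(\Delta)$ by any prescribed even integer at the expense of further $S^2\times S^2$ summands while keeping $\Delta$ unchanged outside the summand; taking this integer to be $-W(\Delta)$ yields a correctly framed disc. The output is a framed embedded Whitney disc pairing $p$ and $q$ in some $N\#^k(S^2\times S^2)$. The step I expect to be the main obstacle is the geometric removal of the interior intersections with $S$: one must verify that the connect-summed dual sphere can be made genuinely geometrically dual to $S$ near $z$, and that the tubing disturbs neither the framing obstruction nor the count of the remaining double points. These are the verifications carried out in \cite[p.~93]{FreedmanKirby} and \cite[Proof of Theorem 2.1]{LWCommentarii}, on which this lemma ultimately rests.
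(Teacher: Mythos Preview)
Your approach follows the same outline as the paper's: obtain an immersed Whitney disc, eliminate its self-intersections, and correct the framing using Lemma~\ref{lem:modifybyeven}. You remove self-intersections by finger moves pushing them across $\partial\Delta$, whereas the paper tubes into the $S^2\times S^2$ summand; both are standard and valid, and your preliminary boundary twist to force the framing obstruction to be even is a detail the paper's one-line sketch leaves implicit.

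The substantive difference is your step removing $\mathring{\Delta}\cap S$. The lemma as stated does not demand this --- it asks only for a \emph{framed embedded} Whitney disc, and the paper's proof addresses only self-intersections and framing --- so this step is extra. Moreover, the Norman-trick argument you sketch has a genuine gap: the spheres $A,B$ in the new summand are disjoint from $S$ and hence are not geometric duals to $S$, so tubing $\Delta$ into one of them does not by itself absorb a point of $\mathring{\Delta}\cap S$. (Resolving the intersection by routing both sheets through the summand would work, but alters $[S]$.) You rightly flag this as the main obstacle. Fortunately it is not needed for the lemma; dropping that step, your argument is complete and essentially matches the paper's.
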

\begin{proof}
The argument is well known; see e.g.~\cite[page 150]{Scorpan}:
eliminate self-intersections of an immersed Whitney disc by tubing into $S^2 \times S^2$ and then apply Lemma~\ref{lem:modifybyeven} to adjust the framing of the resulting embedded Whitney disc.
\end{proof}

\begin{corollary}
\label{cor:StableWhitney}
Let $N$ be a simply-connected $4$-manifold with boundary $S^3$.
Let~$F \subset N$ be a properly embedded surface representing a class $x \in H_2(N,\partial N)$. 
Let~$D \subset N$ be a disc that intersects~$F$ algebraically zero times.
Then there is some~$N \#^{k}(S^2 \times S^2)$ in which $F$ represents the class~$x\oplus 0$, and in which the disc~$D$ is regularly homotopic rel.~boundary to a disc~$D'$ that is disjoint from~$F$.
\end{corollary}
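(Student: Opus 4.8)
The plan is to remove the intersection points of $D$ with $F$ by stable Whitney moves, keeping $F$ fixed throughout. Since $D$ meets $F$ transversally and $D \cdot F = 0$ by hypothesis, the finitely many points of $D \cap F$ split into oppositely signed pairs. The key observation is that these are intersection points between the two components of the \emph{disconnected} immersed surface $S := F \cup D \subset N$, and hence may be regarded as self-intersection points of $S$, to which Lemma~\ref{lem:whitneytrick} applies. For each oppositely signed pair $p, q$, an arc in $D$ from $p$ to $q$ together with an arc in $F$ from $q$ to $p$ forms a Whitney circle (the arcs being chosen to avoid the remaining intersection points). First I would apply the stable Whitney trick, Lemma~\ref{lem:whitneytrick}, to each such pair in turn: after passing to some $N \#^{k}(S^2 \times S^2)$ this produces a framed, embedded Whitney disc $W$ pairing $p$ and $q$.

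Next I would use each $W$ to perform a Whitney move, always pushing the sheet lying on $D$ (never the sheet on $F$) across $W$. Such a move is a regular homotopy of $D$ supported in a neighbourhood of $W$, disjoint from $\partial D$, hence rel boundary, and it cancels the pair $p, q$. The step I expect to be the main obstacle is arranging that the interior of $W$ is disjoint from $F$ and $D$, which is what guarantees that the move genuinely eliminates the pair rather than trading it for fresh intersections. Interior intersections of $W$ with $D$ are harmless: they can be pushed down across $\partial W$ to become self-intersections of the immersed disc, which are permitted since $D'$ need only be immersed. Interior intersections of $W$ with $F$ are the real issue, and these I would remove stably, by tubing $W$ into the spheres of further $S^2 \times S^2$ summands and correcting the Whitney framing by an even integer via Lemma~\ref{lem:modifybyeven} as needed. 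Iterating over all pairs yields a disc $D'$, regularly homotopic to $D$ rel boundary, with $D' \cap F = \emptyset$.

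Finally I would verify the homology statement. The surface $F$ is never altered by any of the moves above, so it suffices to check that $[F]$ maps to $x \oplus 0$ under the inclusion into the stabilised manifold. Every stabilisation introduced above is a connected sum performed in a small ball around an interior (self-)intersection point of a Whitney disc; after a generic perturbation these points lie off the $2$-dimensional surface $F$, so the connect-sum balls, and hence the spherical generators $S^2 \times \{\mathrm{pt}\}$ and $\{\mathrm{pt}\} \times S^2$ of each new summand, may be taken disjoint from $F$. Consequently $[F]$ has zero intersection with these generators and represents $x \oplus 0 \in H_2(N \#^{k}(S^2 \times S^2), \partial N)$, which completes the argument. (Alternatively, one could route the bookkeeping of the homology class through the second item of Proposition~\ref{prop:NullhomologousStablyRepresented}.)
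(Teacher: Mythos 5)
Your construction follows the same route as the paper's (very terse) proof---regard $F\cup D$ as a single immersed surface, pair the algebraically cancelling points of $D\cap F$ by Whitney circles, invoke Lemma~\ref{lem:whitneytrick}, and perform Whitney moves pushing only the $D$-sheet---and you are right to single out the interior intersections of a Whitney disc $W$ with $F$ as the decisive difficulty. The gap is that your proposed resolution of exactly that step is not a valid move. Tubing $W$ into the spheres of a fresh $S^2\times S^2$ summand is a modification of $W$ supported in the complement of $F$ (the stabilising spheres are disjoint from $F$), so it cannot change $W\cap F$ at all; more fundamentally, no modification of $W$ alone near a single point $z\in\operatorname{int}(W)\cap F$ can delete $z$, because the boundary of a small disc of $W$ about $z$ is a meridian of $F$, and this meridian generates $H_1(N_k\setminus F)\cong \Z_d$ (where $d$ is the divisibility of $x$), so whenever $d\neq 1$ it bounds no surface whatsoever in the complement of $F$. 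The one stabilisation move that does kill such a point---replacing the transverse double-point cone at $z$ by the two disjoint Hopf-link discs in the new summand---necessarily modifies $F$ as well and changes its class to $[F]+[\{\operatorname{pt}\}\times S^2]\neq x\oplus 0$, which is precisely what the corollary must rule out: $F$ has to stay fixed and keep representing $x\oplus 0$. Your homology paragraph tacitly assumes the problem away: you place all stabilisation balls ``off the surface $F$ after a generic perturbation,'' but the points of $W\cap F$ you need to remove lie on $F$ by definition and cannot be perturbed off it. (By contrast, the Norman trick would require a sphere \emph{geometrically dual} to $F$, which the stabilising spheres are not, and which $F$ cannot even algebraically admit when $d\geq 2$.)

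Since $F$ may not be moved, intersections of $\operatorname{int}(W)$ with $F$ can only be eliminated by producing a Whitney disc whose interior misses $F$ from the start, and the mechanism for this in the paper's toolkit is the surgery trick of Proposition~\ref{prop:NullhomologousStablyRepresented} rather than any tubing: push the Whitney circle off $F\cup D$, use meridional twists (around both $F$ and $D$) to make the pushoff $\gamma'$ nullhomologous in $N_k\setminus(F\cup D)$, and surger along $\gamma'$ with the framing supplied by that proposition. The result is again a stabilisation $N_k\#(S^2\times S^2)$ in which $F$ still represents $x\oplus 0$, and the pushoff annulus capped with a core disc of the surgery is an \emph{embedded} Whitney disc whose interior is disjoint from $F\cup D$ by construction; its framing can then be corrected by boundary twists along the arc of the Whitney circle lying on $D$ (costing only self-intersections of the final disc, which regular homotopy permits and which you already accept) together with Lemma~\ref{lem:modifybyeven}. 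This is the same mechanism the paper uses in the proof of Theorem~\ref{thm:AmbientSurgeryCriterion} to manufacture discs with clean interiors, and it is the missing ingredient in your argument; as written, your procedure removes one cancelling pair of $D\cap F$ at the price of creating two new pairs for every surviving point of $\operatorname{int}(W)\cap F$, so it need not terminate.
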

\begin{proof}
Apply Lemma~\ref{lem:whitneytrick} and the Whitney trick iteratively 
to the surface~$S=F\cup D$. 
Before performing the Whitney trick,  $F$ represents $x \oplus 0$ in $N \# ^k(S^2\times S^2)$ and the Whitney trick does not affect this.
\end{proof}

\color{black}

\subsection{Proof of the ambient surgery criteria}

We now prove the ambient surgery criteria whose statement we recall for the convenience of the reader.

\begin{customthm}{\ref{thm:AmbientSurgeryCriterion}}
\label{thm:AmbientSurgeryCriterionMain}
Let~$N$ be a simply-connected~$4$-manifold with boundary~$S^3$, let~$K \subset \partial N$ be a knot,  let~$x \in H_2(N,\partial N)$,  let $F$ be a properly embedded surface representing $x$ with boundary~$K$, and let~$\gamma \subset F$ be a simple closed curve.
If
\begin{enumerate}
\item either $x$ is ordinary, or
\item $x$ is characteristic and $q_F([\gamma])=0$,
\end{enumerate}
then there is an $r \ge 0$ such that ambient surgery on $F \subset N_r$ along $\gamma$ is possible and such that the effect $F'$ of surgery represents $x \oplus 0$ and has $\partial F'=K$.
\end{customthm}
\begin{proof}
The argument is inspired by~\cite[Proof of Theorem 2.1]{LWCommentarii}:
in both cases one wants to find an embedded disc~$D$ in a stabilisation of~$N$ in which~$D \cap F=\partial D=\gamma$ and~$W(D)=0$.
Once these two conditions are satisfied, one can perform ambient surgery.

We assert that there is an embedded disc $D$ in a stabilisation of~$N$ with~$\partial D=D \cap F=\gamma$; in this stabilisation $F$ represents $x \oplus 0$.
    We may push~$\gamma$ off~$F$ so that the pushoff~$\gamma'$ is nullhomologous in~$N\sm F$. 
    We briefly justify this claim. Choose an arbitrary pushoff of~$\gamma$; that is, pick an arbitrary section~$s\colon \gamma\to S(\nu(F\subset  N))\cong \gamma\times S^1$ of the sphere bundle of the (trivial) normal bundle of~$F\subset N$. As~$N$ is simply-connected,~$H_1(N\setminus F)$ is cyclic, generated by the class~$\mu$ of a meridian to~$F$. 
Thus~$[s(\gamma)]=k\mu$ for some~$k\in\Z$. Adding~$-k$ meridional turns to~$s$ produces a new section~$s'$, and~$s'(\gamma)=:\gamma'$ is then nullhomologous.

Endow the push off~$\gamma'$ with the framing specified by the second item of 
Proposition~\ref{prop:NullhomologousStablyRepresented}. By that proposition, after performing surgery on~$N$ along~$\gamma'$, we obtain an embedded disc in a stabilisation of~$N$ with~$\partial D=D \cap F=\gamma$.
Since we used a nullhomologous pushoff,  the surface represents~$x \oplus 0$; recall Proposition~\ref{prop:NullhomologousStablyRepresented}.
This establishes the assertion.

We begin with an outline of the argument to come.
In the characteristic case,  the condition~$q_F([\gamma])=0$ ensures that~$W(D)$ is even; after further stabilisations Lemma~\ref{lem:modifybyeven} ensures we can assume~$W(D)=0$.
In the ordinary case,  we show that at  the expense of adding an even number of intersections between~$D$ and~$F$,  the framing obstruction~$W(D)$ can be  made even.
We apply the stable Whitney trick (Corollary~\ref{cor:StableWhitney}) to remove these intersections and then use Lemma~\ref{lem:modifybyeven} to stabilise~$N$ some more  to obtain~$W(D)=0$.
Thus in both cases we have found an embedded disc~$D$ in a stabilisation of~$N$ with~$D \cap F=\partial D=\gamma$ and~$W(D)=0$ and can therefore perform ambient surgery.

We give some details starting with the characteristic case.
Since the push off~$\gamma'$ is nullhomologous,  we saw that~$F$ represents~$x \oplus 0$ in the stabilisation~$N_r$ (Proposition~\ref{prop:NullhomologousStablyRepresented}) and this class remains characteristic.
Since~$\gamma$ lies in~$N$, one verifies that~$q_{F \subset N}([\gamma])=q_{F \subset N_r}([\gamma])$: both quantities can be calculated using discs in~$N$.
Since~$D \cap F=\partial D=\gamma$, it follows that 
$$0
=q_{F \subset N}([\gamma])
=q_{F \subset N_r}([\gamma])
=W(D) \mod 2.$$
\color{black}
Now apply Lemma~\ref{lem:modifybyeven}, at the cost of a single stabilisation per use,  so that~$W(D)=0$.
We can therefore perform ambient surgery.

We now consider the situation where~$x$ is ordinary.
Since~$D \cap F=\partial D=\gamma$, if~$W(D)$ was even,  then the result would follow by the same argument as in the characteristic case.
We therefore assume that~$W(D)$ is odd; thus~$W(D)+D \cdot F$ is odd.
As noted in Remark~\ref{rem:BoundaryTwist},  we need only find an embedded sphere~$S \subset N$ such that~$Q_N([S],[S])+Q_N^{\partial}([F],[S])$ is odd, for then the disc~$D':=D\# S$ would satisfy
$$W(D')+D' \cdot F
=(W(D)+D \cdot F)+Q_N([S],[S])+Q_N^{\partial}([F],[S])
\equiv 1 + 1 
\equiv0 \mod 2.$$
This would imply that~$W(D')$ and~$D' \cdot F$ have the same parity and therefore boundary twists would allow us to find a new disc~$D''$ with~$W(D'')$ even and~$D'' \cdot F=0$.
The combination of the stable Whitney trick (Corollary~\ref{cor:StableWhitney}) and Lemma~\ref{lem:modifybyeven} would then allow us to arrange for a disc~$D'''$ with~$D''' \cap F=\partial D'''=\gamma$ and~$W(D''')=0$  after stabilisations.
This would allow for ambient surgery.
We have therefore reduced the proof of the remainder of the theorem to the following claim.
\begin{claim}
There is a sphere~$S \subset N$ such that~$Q_N([S],[S])$ and~$Q_N^{\partial}([F],[S])$ have different parities.
\end{claim}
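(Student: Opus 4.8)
The plan is to read off the required intersection parities from the definition of ordinariness, reformulate them as a single mod-$2$ linear condition, and then realise the relevant class by an embedded sphere via Lemma~\ref{lem:TopologicalWall}. Since $\partial N=S^3$, the map $H_2(N)\to H_2(N,\partial N)$ is an isomorphism and $Q_N$ is nonsingular, so I silently identify $[F]$ with its absolute preimage and write $Q_N^\partial([F],\alpha)=Q_N([F],\alpha)$. Fix an integral characteristic vector $c\in H_2(N)$, that is one with $Q_N(c,a)\equiv Q_N(a,a)\bmod 2$ for all $a$. Because reduction mod $2$ turns $\alpha\mapsto Q_N(\alpha,\alpha)$ into the linear functional $Q_N(c,-)$, for every $\alpha$ we have $Q_N(\alpha,\alpha)+Q_N^\partial([F],\alpha)\equiv Q_N(c+[F],\alpha)\bmod 2$. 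Writing $v:=c+[F]\in H_2(N;\Z_2)$, the hypothesis that $x=[F]$ is ordinary is exactly the assertion $v\neq 0$, and the Claim reduces to producing an embedded sphere $S$ with $Q_N(v,[S]\bmod 2)=1$.

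\textbf{Main construction.} I would search for a \emph{primitive, ordinary} class $\alpha\in H_2(N)$ with $Q_N(v,\alpha\bmod 2)=1$ and then apply Lemma~\ref{lem:TopologicalWall} to represent it by an embedded sphere $S$, which then has the desired parity against $F$. Two elementary facts organise the search. First, since $H_2(N)$ is free, every \emph{nonzero} class in $H_2(N;\Z_2)$ lifts to a primitive integral class (divide any lift by its content, which is odd). Second, by nonsingularity a primitive class is ordinary precisely when its mod-$2$ reduction differs from $c$. Hence it suffices to find an element $w$ of the affine hyperplane $A:=\{\,w\in H_2(N;\Z_2):Q_N(v,w)=1\,\}$ with $w\neq c$; note $A$ is nonempty because $v\neq 0$, and every element of $A$ is automatically nonzero. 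Since $|A|=2^{\,b_2(N)-1}$, such a $w$ exists whenever $b_2(N)\geq 2$, and also whenever $c\notin A$, i.e.\ whenever $Q_N(v,c)=0$. In all of these cases Lemma~\ref{lem:TopologicalWall} delivers the sphere and the Claim follows.

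\textbf{The main obstacle.} The one remaining configuration, which I expect to be the crux, is $b_2(N)=1$ together with $Q_N(v,c)=1$; here $A=\{c\}$, the only admissible class is characteristic, and Lemma~\ref{lem:TopologicalWall} does not apply. In this case the form is $\langle\pm1\rangle$, the characteristic vector $c$ is the generator $g$, and $v=c$, so what is needed is an embedded sphere representing an \emph{odd} multiple of $g$. By Freedman's classification $N$ is a puncture of $\C P^2$, $\overline{\C P^2}$, or one of the two Chern manifolds. In the first two cases $g$ is represented by the standard sphere $\C P^1$ and we are done. The delicate subcase is the Chern manifolds, where $\ks(N)=1$ obstructs representing $g$ itself by a sphere; there I would instead realise $3g$. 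Its reduction is again $c$, so the parity is correct, and the Arf/Kervaire obstruction $\ks(N)+\tfrac18(\sigma(N)-9\,g\cdot g)=\ks(N)-g\cdot g\equiv 0\bmod 2$ vanishes (using $\sigma(N)=g\cdot g=\pm1$ and $\ks(N)=1$).

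\textbf{Resolving the obstacle.} To embed $3g$ when the Arf obstruction vanishes, I see two routes. Conservatively, one may realise it after stabilisation by applying the already-established \emph{characteristic} case of Theorem~\ref{thm:AmbientSurgeryCriterion} iteratively (exactly as in the proof of Theorem~\ref{thm:StableWithBoundaryIntro}) to a surface representing $3g$ with unknotted boundary, whose condition \eqref{eq:ArfConditionmain} is the vanishing just checked; this is non-circular, since realising the characteristic class $3g$ only invokes the characteristic case of the surgery criteria. This produces a sphere of class $3g\in H_2(N)\subset H_2(N_s)$, and since stabilisation by $S^2\times S^2$ leaves $Q_N$ on $H_2(N)$ unchanged, such a sphere serves identically for the subsequent tubing $D'=D\#S$ carried out in a stabilisation. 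Alternatively, and giving the sphere in $N$ itself, one may invoke Freedman--Quinn~\cite[Theorem 10.5]{FreedmanQuinn} with Stong's correction~\cite{StongCorrection105} in the \emph{$s$-characteristic} case: an immersed $3g$-sphere with cusps making $\mu=0$ is $\pi_1$-negligible with cyclic, hence good, complement, and the secondary Kervaire obstruction is the quantity shown above to vanish. Either way the required sphere exists, completing the Claim in the final case.
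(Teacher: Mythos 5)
Your ``main construction'' is correct and is essentially the paper's own argument: the paper likewise passes to the absolute class $x'$, seeks a \emph{primitive ordinary} class $u$ with $Q_N(u,u)+Q_N(x',u)$ odd, and feeds it to Lemma~\ref{lem:TopologicalWall}; it simply outsources the mod-$2$ linear algebra to \cite[page~393]{LWCommentarii}, which you have written out explicitly. Your isolation of the residual case --- $b_2(N)=1$, odd form, $[F]$ even, where every class of odd square is characteristic and the primitive-ordinary route is structurally impossible --- is a genuine subtlety that the paper's citation glosses over, and your treatment of $(\C P^2)^\circ$ and $(\overline{\C P^2})^\circ$ via $\C P^1$ is correct.

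Your Route 2, however, is wrong, both in mechanism and in conclusion. Freedman--Quinn \cite[Theorem~10.5]{FreedmanQuinn} with Stong's correction \cite{StongCorrection105} applies to \emph{$\pi_1$-negligible} spheres, i.e.\ those for which $\pi_1(N\setminus S)\to\pi_1(N)=1$ is an isomorphism; a sphere representing $3g$ has complement with $H_1\cong\Z_3$, so it is never $\pi_1$-negligible (your phrase ``$\pi_1$-negligible with cyclic complement'' is self-contradictory when $N$ is simply-connected, and primitivity is exactly why Lemma~\ref{lem:TopologicalWall} may invoke this machinery). More decisively, the conclusion is false: you checked the Arf/Kirby--Siebenmann condition for $3g$ but omitted the signature condition --- the second necessary condition this very paper is about. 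Tubing a hypothetical sphere $S$ representing $3g$ into an unknot on $\partial N$ gives a disc of the same class with $b_1=0$, and the Gilmer inequality underlying Proposition~\ref{prop:SignatureCondition} (valid here without any simplicity assumption since $d=3$ is a prime power) would force
$$ b_2(N)\;\geq\;\Big|\,\sigma(N)-\tfrac{2\cdot 1\cdot 2}{9}\,(3g)\cdot(3g)\,\Big|\;=\;3\,|\sigma(N)|\;=\;3\;>\;1, $$
a contradiction; the analogous computation, together with the KS obstruction for $d=1$, excludes \emph{every} odd multiple of $g$. So in the punctured Chern manifolds no sphere with the required parity exists at all: the Claim as literally stated is false there, and no argument can produce ``the sphere in $N$ itself''. (This also shows that the paper's one-line deferral to \cite{LWCommentarii} cannot be taken at face value in this edge case.)

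What survives is your Route 1, and it is the correct repair: it produces the sphere in a stabilisation $N\#^s(S^2\times S^2)$, and this weaker statement suffices for the Claim's only application, since the disc $D$ being tubed already lives in a stabilisation and Theorem~\ref{thm:AmbientSurgeryCriterion} permits stabilising further before forming $D'=D\#S$. Indeed, once one stabilises, the characteristic-class machinery of Route 1 is not even needed: $b_2(N_1)\geq 3$, so your own main construction yields a primitive ordinary class pairing oddly with $v$, and Lemma~\ref{lem:TopologicalWall} applies in $N_1$. The honest conclusion of your proposal is therefore the Claim with ``$S\subset N$'' replaced by ``$S$ in some stabilisation of $N$'' --- which, in the Chern case, is the strongest true statement.
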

\begin{proof}
We need to find a class~$u \in H_2(N)$ such that~$Q_N(u,u)$ and~$Q_N^\partial(x,u)$ have different parities.
However since we want~$u$ to be representable by an embedded sphere we also arrange for~$u$ to be primitive and ordinary; recall Lemma~\ref{lem:TopologicalWall}.

Since~$H_i(S^3)=0$ for~$i=1,2$,
there is a unique class~$x' \in H_2(N)$ such that~$i_*(x')=x$, where~$i_* \colon H_2(N) \to H_2(N,\partial N)$ denotes the inclusion induced map.
Since, by definition of the intersection form, we have~$Q_N(x',u)=Q_N^\partial(i_*(x'),u)=Q_N^\partial(x,u)$,  our revised goal is to find a primitive,  ordinary~$u \in H_2(N)$ such that~$Q_N(u,u)$ and~$Q_N(x',u)$ have different parities.
The argument for finding this $u$ is identical to the argument in the closed case from~\cite[page 393]{LWCommentarii}.
This concludes the proof of the claim.
\end{proof}

As explained above, the claim ensures we can perform ambient surgery on~$\gamma$.
It remains to argue that that the outcome stably represents~$x$.
We recall why ambient surgery is possible.
The bundle~$\nu(\gamma \subset F)$ is trivial.
Pick one of the two sections(=framings) for this bundle, call it~$s$, and identify~$\nu(\gamma \subset F)$ with~$\gamma \times [-1,1]$.
As~$W(\Delta_i)=0$,  the section~$s$ extends to a section of~$\nu(\Delta \subset N)$.
Since the manifolds are orientable, we obtain a framing of~$\Delta$ and can therefore perform ambient surgery.

Write~$\gamma^\pm:=\gamma \times \lbrace \pm 1\rbrace$.
Since the section~$s$ extends over~$D$,  we obtain disjoint discs~$\Delta^\pm$ with boundary~$\gamma^\pm$.
The ambient surgery removes~$\nu(\gamma \subset F)$ from~$F$,  and adds~$\Delta^\pm$.
Since the surgery is performed in the interior of~$F$ we have~$\partial F'=\partial F$.

We argue that~$[F']=[F]$.
It suffices to show~$F' \cdot z=F \cdot z$ for every~$z \in H_2(N)$.
By construction we have~$z \cdot F'=z \cdot F+z \cdot \Delta^++z \cdot \Delta^-$.
An intersection between~$z$ and~$\Delta$ introduces an intersection between~$z$ and~$\Delta^+$ and also an intersection between~$z$ and~$\Delta^-$, but these intersections have opposite signs because they come at either end of~$\Delta\times[0,1]$.
It follows that~$z \cdot F'=z \cdot F$ for every~$z \in H_2(N)$ and therefore~$[F]=[F'].$ \qedhere
\end{proof}

\section{Sufficient conditions for the existence of a disc}
\label{sec:Sufficient}

Throughout this section, let $N$ be a simply-connected $4$-manifold with boundary~$S^3$, let~$K \subset \partial N \cong S^3$ be a knot with~$H_1(\Sigma_d(K))=0$, let $x \in H_2(N,\partial N)$ be a nonzero class of divisibility~$d$
and assume that a simple disc with boundary~$K$ exists in a stabilisation~$N_k:=N\#^k S^2 \times S^2$, representing $x\oplus0$.

The purpose of this section is to prove Proposition~\ref{prop:TopSplitting}, where we list three conditions that are sufficient to ensure the existence of a simple disc in $N$ representing $x \in H_2(N,\partial N)$. 
The conditions are distilled from an algebraic theorem due to Lee and Wilczy\'nski, applied to our setup.
After recalling the notion of a pointed hermitian form in Section~\ref{sub:PointedHermitian},
Section~\ref{sub:SplittingTheorem} recalls the Lee-Wilczy\'nski theorem;~in essence, their splitting theorem lists five conditions that are sufficient for a pointed hermitian form to split off a hyperbolic form.   
Section~\ref{sub:Fixed} notes that in our setting,  one of their conditions is automatic. 
Section~\ref{sub:Signature} reformulates another of the conditions using Levine-Tristram signature of $K$.
Section~\ref{sub:Sufficient} states the remaining conditions in a way amenable to verification in subsequent sections. We also provide the geometric argument confirming the conditions are sufficient to slice the knot~$K$ in $N$.

\subsection{Pointed hermitian forms}
\label{sub:PointedHermitian}

This section recalls the definition of a pointed hermitian form.

\begin{definition}
A \emph{pointed hermitian form} over a ring with involution $R$ is a triple~$(P,\lambda,z)$ where~$P$ is an $R$-module, $\lambda$ is a hermitian form on $P$, and $z \in P$.
\end{definition}

This notion played a prominent role in~\cite{LWCommentarii,LWKtheory,LWGenus}.

\begin{example}
\label{ex:pointed}
We describe two examples of pointed hermitian forms.
~
\begin{itemize}
\item The \emph{pointed hyperbolic form} over a ring with involution $R$ refers to 
\[
H(R)=\left(R\oplus R,\begin{pmatrix} 0 & 1 \\ 1 & 0 \end{pmatrix}, 0\oplus0\right ).
\]
\item 
Let~$N$,~$K$,~$x$ be as described at the start of the section, and let~$D \subset N \#^k S^2 \times S^2$ be a disc representing~$x \oplus 0$, with boundary~$K$. Consider the~$d$-fold branched cover~$\Sigma_d(D)$ of~$D \subset N_k$.
The branch set, namely the embedded disc \(\widetilde{D} \subset \Sigma_d(D)\), represents a 
class in~\(H_2(\Sigma_d(D), \partial\Sigma_d(D))\).
Since~\(H_1(\Sigma_d(K)) = 0\), the inclusion induces an isomorphism~$H_2(\Sigma_d(D)) \to H_2(\Sigma_d(D),\partial \Sigma_d(D)).$
Denote by \(z \in H_2(\Sigma_d(D)) \) the unique element which is sent to \([\widetilde{D}]\) under this isomorphism.
We also use~$Q_{\Sigma_d(D)}$ to denote the intersection form of~$\Sigma_d(D)$ and consider the hermitian form
\begin{align*}
\lambda_{\Sigma_d(D)} \colon H_2(\Sigma_d(D)) \times H_2(\Sigma_d(D)) &\to \Z[\Z_d] \\
(x,y) \mapsto &=\sum_{g\in \Z_d}Q_{\Sigma_d(D)}(x,gy)g^{-1}.
\end{align*}
The following is a pointed hermitian form over~$\Z[\Z_d]$:
\[
(P,\lambda,z)=(H_2(\Sigma_d(D)),\lambda_{\Sigma_d(D)},z).
\]
\end{itemize}
\end{example}

\subsection{Lee-Wilczy\'nski's splitting theorem}
\label{sub:SplittingTheorem}

We now recall an algebraic ``destabilisation'' result due to Lee-Wilczy\'nski. 
For this, we will need some definitions and notation.

\medbreak

Write $\Z_d=\{1,t,\dots,t^{d-1}\}$ the cyclic group of order $d$ and~$\Lambda:=\Z[\Z_d]$ for its group ring.
We denote by~\(\mathcal{N}:=1+t+\ldots+t^{d-1}\in \Lambda\) the \emph{norm element} and $I=\ker(\Lambda\xrightarrow{\varepsilon}\Z)$ the \emph{augmentation ideal}. Note the free resolution
\begin{equation}
\label{eq:FreeResolution}
\dots\xrightarrow{1-t}\Lambda\xrightarrow{\mathcal{N}}\Lambda\xrightarrow{1-t}\Lambda\xrightarrow{\varepsilon}\Z\to 0.
\end{equation}
Whence, we have $I=\{a\in\Lambda\,|\, \mathcal{N}a=0\}$ the $\mathcal{N}$-torsion elements.
The \emph{Rim square} is the following pullback square, where horizontal maps are the quotient by $(\mathcal{N})$ and vertical maps are the quotient by $I$:
\begin{equation}\label{eq:rim}
\begin{tikzcd}
\Lambda \ar[r] \ar[d]& \Lambda_1\ar[d] \\
\Lambda_0 \ar[r]& \Lambda_2.
\end{tikzcd}
:=
\begin{tikzcd}
\Lambda \ar[r] \ar[d]& \Lambda/(\mathcal{N})\ar[d] \\
\Lambda/I\cong\Z \ar[r]& \Z_d.
\end{tikzcd}
\end{equation}
The isomorphism~$\Lambda/I \cong \Z$ is induced by the augmentation homomorphism.
In what follows, the identification~$\Lambda_0\cong \Z$ will be made implicitly.

\begin{notation}
Let $(P,\lambda, z)$ be a pointed hermitian form over $\Lambda$. We denote by
\begin{itemize}
\item~$T_{\mathcal{N}}P$ the submodule of~$\mathcal{N}$-torsion elements, i.e.\@  those~$x \in P$ such that~$\mathcal{N}x=0$;
\item $(P_0,\lambda_0,z_0)$ the induced $\Z$-valued form on the abelian group~$P_0 :=P/T_{\mathcal{N}}P$;
\item~$\pi_0 \colon P \to P_0$ the quotient map;
\item~$\alpha_0 \colon (P_0,\lambda_0,z_0) \to (P_0',\lambda_0',z_0')$ the morphism induced by a morphism~$\alpha$;
\item~$P^{\Z_d}$ the submodule of $P$ consisting of elements fixed by ${\Z_d}$;
\item~$(P_1,\lambda_1,z_1)$ the induced $\Lambda_1$-valued form on the~$\Lambda_1$-module~$P_1:=P/P^{\Z_d}$;
\item~$\sigma_j(\lambda)$ the signature of the form $\lambda$ restricted to the eigenspace
$$
\left\lbrace x \in P \otimes \C \mid tx=e^{\frac{2\pi ij}{d}}x \right\rbrace.$$
Equivalently,~$\sigma_j(\lambda)$ is the signature of the hermitian form~$(x,y) \mapsto \ev_{e^{2\pi ij/d}}(\lambda(x,y))$, where~$\ev_{e^{2\pi ij/d}} \colon \C[\Z_d] \to \C$ maps the generator of~$\Z_d$ to~$e^{2\pi ij/d}$; see e.g.~\cite[Corollary~3.6]{CimasoniConway}.
\end{itemize}
\end{notation}

\begin{remark}
\label{rem:NonSingular}
One verifies that if $P$ is projective and $\lambda$ is nonsingular, then $\lambda_0$ is also nonsingular.
\end{remark}

Given a pointed hermitian form \((P,\lambda,z)\) over $\Lambda$ where it is known that the corresponding $\Z$-form splits off some number of hyperbolic summands,~$\beta_0\colon (P_0,\lambda_0,z_0)\cong (L, \lambda, x) \oplus H(\Z)^k$, we wish to know when the splitting can be ``lifted'' to the level of the $\Lambda$-form. This desired situation is summarised in the following diagram of quotient maps and isometries:
\[\begin{tikzcd}
	{(P,\lambda,z)} &&& {(P',\lambda',z')\oplus H(\Lambda)^k} \\
	\\
	&&& {(P_0',\lambda_0',z_0')\oplus H(\Z)^k} \\
	\\
	{(P_0,\lambda_0,z_0)} &&& {(L, \lambda, x) \oplus H(\Z)^k.}
	\arrow["\alpha", from=1-1, to=1-4]
	\arrow["\cong"', from=1-1, to=1-4]
	\arrow["{\pi_0}"', from=1-1, to=5-1]
	\arrow["{\pi_0}", from=1-4, to=3-4]
	\arrow["\cong"',"{\alpha_0}", from=5-1, to=3-4, bend left =18]
	\arrow["\cong"', from=5-1, to=5-4]
	\arrow["{\beta_0}", from=5-1, to=5-4]
	\arrow["{\beta_0' \oplus \text{Id}}", from=3-4, to=5-4]
	\arrow["\cong"', from=3-4, to=5-4]
\end{tikzcd}
\]

Lee-Wilczy\'nski's splitting theorem provides the conditions ensuring such a diagram exists and commutes.
For brevity, we say that the hermitian form~$\lambda$ is \emph{weakly even} on a subset $A \subset P$ if, for every~$a \in A$,  it satisfies $\lambda(a,a)=r+\overline{r}$ for some $r \in \Lambda.$

\begin{theorem}[The Splitting Theorem {\cite[Theorem 3.1]{LWGenus}}]\label{thm:splitting}
    Let \((P,\lambda,z)\) be a nonsingular pointed hermitian form over \(\Lambda\).
     Suppose the following conditions hold:
    \begin{enumerate}
            \item \(z\in P^{\Z_d}\);
        \item \(P\) is a projective module of rank \(b+2k\) over \(\Lambda\) with $b,k \geq 1$;
         \item   the $j$-signatures of $\lambda$ satisfy~\(b \geq \max_{0 \leq j<d} \vert \sigma_j(\lambda)\vert\);
        \item there is a nonsingular pointed hermitian \(\Z\)-module \((L,\lambda,x)\) and a splitting \[\beta_0\colon (P_0,\lambda_0,z_0) \cong (L,\lambda,x) \oplus H(\Z)^k;\]
        \item the form $\lambda$ is weakly even on the kernel of
\[\begin{tikzcd}
		{P} & {P_0} & {L\oplus \Z^{2k} } & L.
	\arrow["{\pi_0}", from=1-1, to=1-2]
	\arrow["{\beta_0}", from=1-2, to=1-3]
	\arrow["\proj_1",from=1-3, to=1-4]
\end{tikzcd}\]
    \end{enumerate}
    Then there is a pointed hermitian form \((P', \lambda', z')\) over $\Lambda$ and isometries of pointed hermitian~forms 
        \begin{align*}
            \alpha & \colon (P,\lambda,z) \cong (P',\lambda',z') \oplus H(\Lambda)^k, \\ 
        \beta_0' & \colon (P_0',\lambda_0',z_0') \cong (L, \lambda,x)
        \end{align*}
        such that \(\beta_0 = (\beta_0' \oplus \id)\circ \alpha_0\).
        
        \end{theorem}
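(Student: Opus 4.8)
The plan is to prove the theorem by induction on the number $k$ of hyperbolic summands, the crux being to split off a single copy of $H(\Lambda)$ while preserving every hypothesis. First I would reduce to the case $k=1$: granting an isometry $(P,\lambda,z)\cong(P'',\lambda'',z'')\oplus H(\Lambda)$ that, after applying $\pi_0$, peels off the last $H(\Z)$ factor of $\beta_0$, I check that all five conditions descend to $(P'',\lambda'',z'')$ with $k$ replaced by $k-1$. Conditions (1) and (4) are immediate; projectivity (2) is inherited since a hyperbolic summand is free; the signature bound (3) persists because a copy of $H(\Lambda)$ contributes $j$-signature $0$ at every character while dropping the rank by $2$, so $\max_j|\sigma_j|$ is unchanged and still bounded by the same $b$; and weak evenness (5) is inherited by restriction. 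Iterating $k$ times then yields the full splitting, and the compatibility $\beta_0=(\beta_0'\oplus\id)\circ\alpha_0$ is assembled from the single-step compatibilities.

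For the single split I would build a free, totally isotropic, unimodular rank-one direct summand $\langle e\rangle\cong\Lambda$ of $P$, orthogonal to $z$, whose image $\pi_0(e)\in P_0$ is an isotropic generator of one of the $H(\Z)$ summands supplied by $\beta_0$; completing $e$ to a hyperbolic pair $(e,f)$ then produces the desired $H(\Lambda)$. The natural vehicle is the Rim square \eqref{eq:rim}, which presents a projective $\Lambda$-module with hermitian form as compatible data over $\Lambda_0=\Z$ and over $\Lambda_1=\Lambda/(\mathcal N)$, glued along $\Lambda_2=\Z_d$. Over $\Lambda_0$ the isotropic vector is handed to me directly by $\beta_0$. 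Over $\Lambda_1$, which is an order in the product of cyclotomic fields $\prod_{e\mid d,\,e>1}\Q(\zeta_e)$, the signature hypothesis is exactly what forces an isotropic unimodular vector to exist: tensoring with $\C$ and decomposing into character eigenspaces, nonsingularity gives $p_j+n_j=b+2k$ at the $j$-th eigenspace while $|\sigma_j(\lambda)|=|p_j-n_j|\le b$, so $p_j,n_j\ge k\ge1$ and each eigenspace contains $k$ hyperbolic planes. A local-to-global (Hasse-type) argument over the order $\Lambda_1$, using projectivity, then promotes these local isotropic vectors to an honest isotropic unimodular vector over $\Lambda_1$.

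The remaining task is to realize the $\Lambda_0$- and $\Lambda_1$-level isotropic data by one genuine element $e\in P$ that is isotropic over $\Lambda$. Lifting an isotropic generator $e_0\in P_0$ along the surjection $\pi_0\colon P\to P_0$ produces $\tilde e$ whose self-intersection $\lambda(\tilde e,\tilde e)$ a priori lies only in the augmentation ideal $I$ of $\mathcal N$-torsion elements, which is precisely the freedom left in the lift (namely modification by $T_{\mathcal N}P=\ker\pi_0$). This is where weak evenness enters: since $e_0$ generates an $H(\Z)$ summand, $\tilde e$ lies in the kernel of $\proj_1\circ\beta_0\circ\pi_0$, so condition (5) gives $\lambda(\tilde e,\tilde e)=r+\overline r$; because a trace can be cancelled by an equivariant correction of the lift within its $\pi_0$-fibre, I obtain a genuinely isotropic, unimodular $e\in P$. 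Nonsingularity then yields a dual vector $g$ with $\lambda(e,g)=1$; weak evenness again makes $\lambda(g,g)$ a trace, so a substitution $f:=g-\mu e$ with $\mu+\overline\mu=\lambda(g,g)$ produces the hyperbolic partner with $\lambda(f,f)=0$. Finally, $z\in P^{\Z_d}$ maps to $0$ in $P_1=P/P^{\Z_d}$, so the pair $(e,f)$ can be arranged orthogonal to $z$; setting $z':=\alpha(z)$ places the point in the complementary summand, and building $(e,f)$ as lifts of the $\beta_0$-hyperbolic generators secures $\beta_0=(\beta_0'\oplus\id)\circ\alpha_0$.

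I expect the main obstacle to be the gluing step of the previous paragraph, where the integral splitting over $\Lambda_0$ and the cyclotomic splitting over $\Lambda_1$ must be reconciled over $\Z_d$ and a \emph{single} equivariant vector $e\in P$ must be shown to be simultaneously isotropic over $\Lambda$, primitive, and unimodular. The signature hypothesis (3) only guarantees the separate pieces exist; it is weak evenness (5) that kills the $\Z_d$-level discrepancy between the two splittings and makes the lifted self-intersections correctable, and the truly delicate point is verifying that the resulting $e$ generates a free \emph{orthogonal direct summand} rather than merely an isotropic line — which is where nonsingularity of $\lambda$ together with the pullback structure of the Rim square must be used to establish unimodularity.
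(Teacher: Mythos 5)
There is no proof in the paper to compare against: Theorem~\ref{thm:splitting} is imported wholesale from Lee-Wilczy\'nski \cite[Theorem 3.1]{LWGenus} (specialised to genus zero, i.e.\ $q=0$), and the paper's only commentary is the remark recording that specialisation. So your proposal is really competing with the original proof in \cite{LWGenus,LWKtheory}, which is a long and genuinely difficult piece of algebra resting on cancellation/stability theorems for hermitian forms over orders (unitary $K$-theory). Your global architecture — induction on $k$, the Rim square, signatures forcing indefiniteness on the $\Lambda_1$-side, weak evenness entering at the gluing — does echo the shape of their argument, and your reduction to the case $k=1$ is sound (the signature bound, projectivity, the $\Z$-splitting, and weak evenness all do descend as you say). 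But the two steps that carry all the weight are asserted rather than proved.

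First, the claim that ``a local-to-global (Hasse-type) argument over the order $\Lambda_1$, using projectivity, then promotes these local isotropic vectors to an honest isotropic unimodular vector over $\Lambda_1$'' is not a routine deduction; it is the theorem. Your eigenspace computation ($p_j,n_j\ge k\ge 1$) only shows the form is indefinite at every archimedean place. For a hermitian form over an \emph{order} in a product of cyclotomic fields, the existence of a unimodular isotropic vector — let alone a hyperbolic direct summand compatible with a prescribed splitting modulo $\mathcal{N}$ — is obstructed by class-group and genus phenomena, and naive Hasse principles fail. This is exactly why Lee-Wilczy\'nski route their proof through unitary $K$-theory: the hypothesis $b\ge\max_j|\sigma_j(\lambda)|$ together with rank $b+2k$ is what feeds the hyperbolic-rank hypotheses of the cancellation theorems they invoke. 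Second, the gluing step: modifying the lift means replacing $\tilde e$ by $\tilde e+w$ with $w\in T_{\mathcal N}P$, which changes the self-intersection by $\lambda(\tilde e,w)+\overline{\lambda(\tilde e,w)}+\lambda(w,w)$; to cancel a given trace $r+\overline r$ you need the pairing of $\tilde e$ against $T_{\mathcal N}P$ to be sufficiently surjective, and nothing in your argument establishes this — the pairing $T_{\mathcal N}P\times P\to\Lambda$ can be quite degenerate. On top of that, you explicitly concede but do not resolve the unimodularity question (that the corrected $e$ spans a free orthogonal \emph{direct summand}), which is precisely what produces the $H(\Lambda)$ factor. As it stands, the proposal is a plausible outline of the strategy, not a proof.
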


\begin{remark}
Theorem~\ref{thm:splitting} is a modified version of the splitting theorem found in \cite[Theorem~3.1]{LWGenus}.
We are restricting to the case of genus zero, which corresponds to setting \(q=0\) in~\cite[Theorem 3.1]{LWGenus}. In this case, as in the other formulation of the splitting theorem from~\cite[Theorem 6.1]{LWKtheory}, there is no $2$-adic condition.
\end{remark}

\subsection{The fixed-point condition}
\label{sub:Fixed}

Let~$N$, $K$, $x$ be as described at the start of the section, and let~$D \subset N \#^k S^2 \times S^2$ be a disc representing $x \oplus 0$ with boundary~$K$.
 Recall from Example~\ref{ex:pointed} that this determines a pointed hermitian form
\[
(P,\lambda,z)=(H_2(\Sigma_d(D)),\lambda_{\Sigma_d(D)},z)
\]
over $\Lambda$, where $z$ corresponds to \([\widetilde{D}]\), the class of the branch set, under the inclusion induced isomorphism \( H_2(\Sigma_d(D))\cong H_2(\Sigma_d(D), \Sigma_d(K))\). 
We now verify $z$ is indeed in the fixed-point set of the $\Z_d$-action.

\begin{lemma}
\label{lem:FixedPointCondition}
Let~$N$ be a simply-connected~\(4\)-manifold with boundary \(S^3\), let~$x \in H_2(N, \partial N)$ be a nonzero homology of divisibility~$d$, and let $K \subset S^3$ be a knot with $H_1(\Sigma_d(K))=0$.
The class~$z$ lies in~\(H_2(\Sigma_d(D))^{\Z_d}\).
\end{lemma}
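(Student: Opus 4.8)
The plan is to show that the class $z \in H_2(\Sigma_d(D))$ corresponding to the branch set $[\widetilde{D}]$ is fixed by the deck transformation action of $\Z_d$. The key geometric observation is that the branch locus $\widetilde{D} \subset \Sigma_d(D)$ is, by its very nature as the fixed-point set (or branch set) of the branched covering, invariant under the covering group action: each element $g \in \Z_d$ maps $\widetilde{D}$ to itself as a set, because the covering map restricts to a homeomorphism on the branch set. First I would make this precise by recalling that $\Sigma_d(D) \to D$ is a branched cover with branch set the image of $\widetilde{D}$, and the deck group $\Z_d$ acts freely on the complement of $\widetilde{D}$ while fixing $\widetilde{D}$ pointwise (or at least setwise). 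This implies that for each $g \in \Z_d$, we have $g_*[\widetilde{D}] = [\widetilde{D}]$ in $H_2(\Sigma_d(D), \partial\Sigma_d(D))$.

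The main technical point is to transfer this fixed-ness from the relative class $[\widetilde{D}] \in H_2(\Sigma_d(D), \partial \Sigma_d(D))$ back to the absolute class $z \in H_2(\Sigma_d(D))$ via the inclusion-induced isomorphism. Here I would use that, by the hypothesis $H_1(\Sigma_d(K)) = 0$, the map $H_2(\Sigma_d(D)) \to H_2(\Sigma_d(D), \partial\Sigma_d(D))$ is an isomorphism (as already recorded in Example~\ref{ex:pointed}). Since this isomorphism is induced by inclusion, it is $\Z_d$-equivariant: the deck group acts on both the absolute and relative homology, and the natural map commutes with these actions. Therefore, because $[\widetilde{D}]$ is fixed and the iso is equivariant, its preimage $z$ must also be fixed, i.e.\ $z \in H_2(\Sigma_d(D))^{\Z_d}$.

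The step I expect to require the most care is verifying the $\Z_d$-equivariance of the inclusion-induced isomorphism together with the claim that $g_*[\widetilde{D}] = [\widetilde{D}]$ at the level of relative homology. The subtlety is that while $\widetilde{D}$ is setwise invariant, one must confirm the action preserves its fundamental class (orientation), rather than, say, reversing it — this follows because the deck transformations are orientation-preserving homeomorphisms of $\Sigma_d(D)$ restricting to the identity on the branch set $\widetilde{D}$, so they fix the relative fundamental class on the nose. Once orientation compatibility is checked, equivariance of the inclusion map is automatic since inclusions of pairs are natural and the $\Z_d$-action is defined on the pair $(\Sigma_d(D), \partial\Sigma_d(D))$. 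I would conclude by combining equivariance with the invariance of $[\widetilde{D}]$ to deduce $z \in H_2(\Sigma_d(D))^{\Z_d}$, completing the proof.
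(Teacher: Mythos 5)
Your proposal is correct and follows essentially the same argument as the paper: the class $[\widetilde{D}]$ is fixed in relative homology because $\widetilde{D}$ is the branch set, and the inclusion-induced isomorphism $H_2(\Sigma_d(D)) \to H_2(\Sigma_d(D),\partial\Sigma_d(D))$ is a map of $\Lambda$-modules (i.e.\ $\Z_d$-equivariant), so the preimage $z$ is also fixed. Your extra care about orientations is a nice touch but is subsumed in the paper's observation that the deck transformations fix the branch set pointwise, hence fix its relative fundamental class.
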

\begin{proof}
Since $\widetilde{D}$ is the branch set of $\Sigma_d(D)$, the element~$[\widetilde{D}]$ belongs to $H_2(\Sigma_d(D),\Sigma_d(K))^{\Z_d}$. 
Since the inclusion induced isomorphism~$i\colon H_2(\Sigma_d(D)) \to H_2(\Sigma_d(D),\Sigma_d(K))$ is an isomorphism of~\(\Lambda\)-modules, $z=i_*^{-1}([\widetilde{D}]) \in H_2(\Sigma_d(D))^{\Z_d}$, establishing the lemma.
\end{proof}

\subsection{The $j$-signature condition}
\label{sub:Signature}

Next we recast the signature condition of Theorem~\ref{thm:splitting}, in the case where $P=H_2(\Sigma_d(D))$, in terms of Levine-Tristram signatures of the knot $K$.

Let $M$ be a simply-connected $4$-manifold with boundary $S^3$.
Given a properly embedded surface~$F \subset M$ with $H_1(M \setminus F) \cong \Z_d$, the intersection form of~$\Sigma_d(F)$ restricts to a hermitian form on the $e^{2\pi ij/d}$-eigenspace of the $\Z_d$-action
$$\left\{ x \in H_2(\Sigma_d(F);\C) \mid tx=e^{\frac{2\pi ij}{d}} x \right\}.$$
We write~$\sigma_j(\Sigma_d(F))$ for the signature of this restricted form.
Also, we write~$\sigma_K(\omega)$ for the Levine-Tristram signature of a knot~$K$ at~$\omega \in S^1$.
The next lemma, which stems from the work of Rohlin~\cite{RoklinTwoDimensional}, recasts~$\sigma_j(\Sigma_d(D))$ in terms of~$\sigma_K(e^{2\pi ij/d})$ and the algebraic topology of~$M$.

\begin{lemma}
\label{lem:PrelimLT}
Let~$M$ be a simply-connected~$4$-manifold with boundary~$S^3$,  let $K \subset \partial M \cong S^3$ be a knot,  and let $0 \leq j <d$.  If $\Sigma_d(F) \to M$ is a $d$-fold cover with branch set a surface $F \subset M$ with boundary $K$, then
$$\sigma_j(\Sigma_d(F))=\sigma(M)-\frac{2j(d-j)}{d^2} Q_M([F],[F])+\sigma_K(e^{2\pi ij/d}).$$ 
\end{lemma}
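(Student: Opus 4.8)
The plan is to reduce the computation to a \emph{closed} branched cover, where the Atiyah--Singer $G$-signature theorem applies, and to isolate the Levine--Tristram signature as the contribution of a pushed-in Seifert surface. First I would cap off the boundary: glue a $4$-ball to $M$ along $\partial M = S^3$ to form a closed $4$-manifold $\widehat M = M \cup_{S^3} B^4$, and push a Seifert surface for $K$ into $B^4$ to obtain a surface $F_K \subset B^4$ with $\partial F_K = K$. Then $\widehat F := F \cup_K F_K$ is a closed surface in $\widehat M$, and the $d$-fold branched cover decomposes along $\Sigma_d(K)$ as $\Sigma_d(\widehat F) = \Sigma_d(F) \cup_{\Sigma_d(K)} \Sigma_d(F_K)$. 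Since this is a gluing along the \emph{entire} common boundary, Novikov additivity applies to each equivariant signature $\mathrm{sign}(t^k,-)$, and hence (taking the discrete Fourier transform over $0 \le k < d$) to each eigenspace signature, giving
\[
\sigma_j(\Sigma_d(\widehat F)) = \sigma_j(\Sigma_d(F)) + \sigma_j(\Sigma_d(F_K)).
\]

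For the closed term, I would invoke the $G$-signature theorem. Applied to the cyclic branched cover $\Sigma_d(\widehat F) \to \widehat M$, whose deck transformation $t$ fixes the branch surface and rotates its normal bundle, Hirzebruch's branched-cover formula yields
\[
\sigma_j(\Sigma_d(\widehat F)) = \sigma(\widehat M) - \frac{2j(d-j)}{d^2}\,[\widehat F]\cdot[\widehat F].
\]
As a consistency check, summing over $0 \le j < d$ recovers the classical identity $\sigma(\Sigma_d(\widehat F)) = d\,\sigma(\widehat M) - \tfrac{d^2-1}{3d}[\widehat F]\cdot[\widehat F]$. It then remains to translate the right-hand side back to $M$: capping off with $B^4$ does not change the signature, so $\sigma(\widehat M) = \sigma(M)$; and since $\partial M = S^3$ the map $H_2(M) \to H_2(M,\partial M)$ is an isomorphism, the capping off induces an isometry $H_2(\widehat M) \cong H_2(M)$, and the null-homologous piece $F_K \subset B^4$ contributes nothing, so $[\widehat F]\cdot[\widehat F] = Q_M([F],[F])$.

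The remaining input is the $B^4$ term, and this is where I would use the classical identification, going back to Rohlin~\cite{RoklinTwoDimensional}, of the eigenspace signature of the $d$-fold branched cover of $B^4$ over a pushed-in Seifert surface with the Levine--Tristram signature of $K$: with the orientation conventions in force, $\sigma_j(\Sigma_d(F_K)) = -\,\sigma_K(e^{2\pi i j/d})$, which is computed directly from a Seifert matrix $V$ as the signature of $(1-\omega)V + (1-\bar\omega)V^{T}$ with $\omega = e^{2\pi i j/d}$. Substituting the two computations into the additivity relation gives the claimed formula. The main obstacle is not any single deep step---each ingredient is classical---but the careful bookkeeping of \emph{signs and conventions}: pinning down the exact coefficient $\tfrac{2j(d-j)}{d^2}$ emerging from the $G$-signature theorem, fixing the sign in the Seifert-surface computation so that the pieces add with the correct relative orientations, and confirming the self-intersection normalisation $[\widehat F]\cdot[\widehat F] = Q_M([F],[F])$ induced by the Seifert framing of $K$. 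A useful sanity check throughout is the case $j=0$, where $\omega = 1$, $\sigma_K(1)=0$, and the formula correctly reduces to $\sigma_0(\Sigma_d(F)) = \sigma(M)$.
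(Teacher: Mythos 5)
Your proposal is correct and follows essentially the same route as the paper's proof: cap off $M$ with a $4$-ball, close up $F$ with a surface in the ball bounding $K$, apply the closed-case eigenspace signature formula (the paper cites Rohlin and Casson--Gordon, which is the same computation as your $G$-signature/Hirzebruch formula), and use Novikov additivity together with Viro's four-dimensional interpretation of the Levine--Tristram signature to isolate the $B^4$ contribution. The only cosmetic difference is in where the sign is placed: the paper glues in the orientation-reversed piece, writing $\Sigma_d(S)=\Sigma_d(F)\cup -\Sigma_d(G)$ with $\sigma_j(\Sigma_d(G))=+\sigma_K(e^{2\pi i j/d})$, whereas you keep the gluing orientation-preserving and absorb the sign into $\sigma_j(\Sigma_d(F_K))=-\sigma_K(e^{2\pi i j/d})$; these are equivalent conventions.
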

\begin{proof}
Consider the closed~$4$-manifold~$X:=M \cup D^4$. 
Let~$G \subset D^4$ be a properly embedded surface with boundary~$K$. A Mayer-Vietoris argument shows the surface~$S:=F \cup G\subset  X$ has~$H_1(X\sm S)\cong~\Z_{d}$. 
The abelianisation~$\pi_1(X\sm S)\to H_1(X\sm S)\cong  \Z_d$ extends~$\pi_1(M\sm F) \to\Z_d$. 
Denote the \(d\)-fold branched cover of~$X$ over \(S\) by \(\Sigma_d(S)\), noting this extends the branched cover of~$M$ over~$F$.
Since~$X$ is closed, a result of Rohlin~\cite{RoklinTwoDimensional} (see also~\cite[Lemma~2.1]{CassonGordonOnSliceKnots}), together with the definition of~$S$ as~$F \cup G$,  implies that
$$ \sigma_j(\Sigma_d(S)) 
 =\sigma(X)-\frac{2j(d-j)}{d^2} Q_X([S],[S])
  =\sigma(M)-\frac{2j(d-j)}{d^2} Q_M([F],[F]).$$ 
On the other hand,  combining Novikov additivity (for~$\Sigma_d(S)=\Sigma_d(F)\cup -\Sigma_d(G)$) with a four-dimensional interpretation of the Levine-Tristram signature~\cite{ViroBranched} (see also~\cite[Theorem~3.1]{ConwaySurvey}) yields
$$ \sigma_j(\Sigma_d(S)) 
=\sigma_j(\Sigma_d(F)) -\sigma_j(\Sigma_d(G)) 
=\sigma_j(\Sigma_d(F)) -\sigma_K(e^{2\pi i j/d}).  
$$ 
The lemma now follows by combining these two results.
\end{proof}

Finally,  we note that the signature condition in Theorem~\ref{thm:Main} is necessary.
\begin{proposition}
\label{prop:SignatureCondition}
For a simply-connected~$4$-manifold~$M$ with boundary~$S^3$,   a~$\Z_d$-surface~$F \subset M$ with boundary a knot~$K \subset S^3$,  and $0 \leq j<d$, we have
$$ b_1(F)+b_2(M) \geq \Bigg| \sigma(M)-\frac{2j(d-j)}{d^2} Q_M([F],[F]) +\sigma_K(e^{2\pi ij/d}) \Bigg|=\Bigg|\sigma_j(\Sigma_d(F))\Bigg|.$$ 
\end{proposition}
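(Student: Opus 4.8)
The plan is to treat the displayed equality and inequality separately. The equality $\sigma_j(\Sigma_d(F))=\sigma(M)-\frac{2j(d-j)}{d^2}Q_M([F],[F])+\sigma_K(e^{2\pi ij/d})$ is precisely the content of Lemma~\ref{lem:PrelimLT}, so the only new work is the inequality $b_1(F)+b_2(M)\geq|\sigma_j(\Sigma_d(F))|$. By definition $\sigma_j(\Sigma_d(F))$ is the signature of a Hermitian form on the complex vector space $V_j:=\{x\in H_2(\Sigma_d(F);\C)\mid tx=\omega x\}$, where $\omega=e^{2\pi ij/d}$. Since the signature of any Hermitian form is bounded in absolute value by the dimension of the underlying space, it suffices to prove $\dim_\C V_j\leq b_1(F)+b_2(M)$.

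The case $j=0$ is immediate: the invariant eigenspace satisfies $V_0\cong H_2(\Sigma_d(F);\C)^{\Z_d}\cong H_2(M;\C)$, of dimension $b_2(M)\leq b_1(F)+b_2(M)$. So assume $1\leq j\leq d-1$, i.e.\ $\omega\neq1$. Writing $M_F:=M\setminus\mathring{\nu}(F)$ for the complement, I would first identify $V_j$ with the twisted homology $H_2(M_F;\C_\omega)$, where $\C_\omega$ is the rank-one local system on which the meridian of $F$ acts by $\omega$. This follows from an equivariant Mayer–Vietoris argument for $\Sigma_d(F)=\widetilde{M_F}\cup\widetilde{\nu(F)}$, where $\widetilde{M_F}\to M_F$ is the $d$-fold cyclic cover: the deck transformation acts on the branched piece $\widetilde{\nu(F)}$ (fixing the branch locus $\widetilde{F}\cong F$) and on the separating circle bundle by fibre rotation, hence trivially on their complex homology, so for $\omega\neq1$ these two pieces contribute nothing and $H_*(\Sigma_d(F);\C)_\omega\cong H_*(\widetilde{M_F};\C)_\omega=H_*(M_F;\C_\omega)$.

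Next I would compute $\dim_\C H_2(M_F;\C_\omega)$ via the twisted Euler characteristic, which equals the ordinary one. Decomposing $M=M_F\cup_Y\nu(F)$ along the circle bundle $Y$ gives $\chi(M_F)=\chi(M)-\chi(F)=(1+b_2(M))-(1-b_1(F))=b_1(F)+b_2(M)$, using $\partial M=S^3$ for $\chi(M)=1+b_2(M)$ and that $F$ is a once-punctured genus-$g$ surface. It then remains to show that the twisted Betti numbers $h_k:=\dim_\C H_k(M_F;\C_\omega)$ vanish for $k\neq2$. The key input is that $F$ is a $\Z_d$-surface, so $\pi_1(M_F)\cong\Z_d$ and $\widetilde{M_F}$ is its universal cover, hence simply connected; this forces $h_0=h_1=0$, while $h_4=0$ because $M_F$ has nonempty boundary. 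For $h_3=0$ I would invoke twisted Lefschetz duality together with $H_0(\partial M_F;\C_\omega)=0$ (the boundary is connected and the meridian, acting by $\omega\neq1$, generates), via the long exact sequence of the pair. With $h_0=h_1=h_3=h_4=0$ the Euler characteristic collapses to $\dim_\C V_j=h_2=\chi(M_F)=b_1(F)+b_2(M)$, yielding the inequality.

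The main obstacle is the homological bookkeeping at the boundary. One must correctly exploit that $\partial\Sigma_d(F)=\Sigma_d(K)$ is the $d$-fold cyclic branched cover of the knot $K$—a rational homology sphere whose $\omega$-eigenspace homology vanishes—and carry out the duality argument giving $h_3=0$ \emph{without} assuming $H_1(\Sigma_d(K))=0$, which is not hypothesised here. Care is likewise needed to verify that the Mayer–Vietoris identification is genuinely equivariant and that the circle-bundle and branch-locus contributions really die for $\omega\neq1$. Once these points are secured, the value of $\dim_\C V_j$ is forced and the signature bound is immediate; this is the argument essentially due to Gilmer~\cite{GilmerConfiguration}.
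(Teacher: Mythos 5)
Your proposal is correct and follows essentially the same route as the paper's proof: both reduce the inequality to computing $\dim_\C H_2(\Sigma_d(F);\C)_\omega = b_1(F)+b_2(M)$ for $\omega = e^{2\pi i j/d}\neq 1$, identify this eigenspace with the twisted homology $H_2(X_F;\C^\omega)$ of the surface exterior via a Mayer--Vietoris argument in which the branch locus and circle bundle contributions die, and conclude by an Euler characteristic count using that $F$ is a $\Z_d$-surface (so the relevant cover is simply-connected), with the equality coming from Lemma~\ref{lem:PrelimLT} in both cases. The only minor differences are that the paper disposes of $j=0$ by citing Gilmer while you use the transfer isomorphism $H_2(\Sigma_d(F);\C)^{\Z_d}\cong H_2(M;\C)$, and that you make explicit the vanishing of $H_3(X_F;\C^\omega)$ (via duality and $H_0(\partial X_F;\C^\omega)=0$), a point the paper's Euler characteristic step leaves implicit.
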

\begin{proof}
The first inequality is due (in much greater generality) to Gilmer~\cite[Remark following Corollary 4.2]{GilmerConfiguration}.
Gilmer's result is stated for $d$ a prime power dividing $x$ but without the assumption that $F$ is simple. 
A little work shows that the prime power assumption can be dropped if $F$ is a~$\Z_d$-surface (so, in particular, $d$ is the divisibility of $x$).
The equality is Lemma~\ref{lem:PrelimLT}.

For the reader's convenience, we outline why, when $F$ is simple, Gilmer's result does not require the prime power hypothesis.
For $j=0$,  the result already follows from Gilmer's work. 
Now assume that $j>0$, set $\omega:=e^{2\pi ij/d}$,  and for~$X \in \{ X_F^d,\Sigma_d(F)\}$
$$ H_2(X;\C)_\omega:=\left\{ x \in H_2(X;\C) \mid tx=\omega x \right\}.$$
Since $|\sigma_j(\Sigma_d(F))| \leq \dim_\C H_2(\Sigma_d(F))_\omega$, it suffices to prove that~$\dim_\C H_2(\Sigma_d(F))_\omega \leq b_1(F)+b_2(M)$.
In fact we will prove equality.
We use~$\C^\omega$ to denote both the $\Lambda_\C:=\C[\Z_d]$-module and the $\Lambda$-module obtained by linearly extending~$tx:=\omega x$ for $x \in \C$.
For~$X \in \{ X_F^d,\Sigma_d(F)\}$, we have 
\begin{equation}
\label{eq:EigenspaceTensor}
H_2(X)_\omega 
\cong H_2(X;\C) \otimes_{\Lambda_\C} \C^\omega 
\cong H_2(X) \otimes_{\Lambda} \C^\omega,
\end{equation}
where the first equality follows e.g.~from~\cite[Proposition 3.3]{CimasoniConway} and the second from the associativity of the tensor product.

Since $F$ is simple, the spaces $X_F^d$ and $\Sigma_d(F)$ are simply-connected.
Recalling that $\Lambda_0=\Lambda/I$ denotes $\Z$ with the $\Lambda$-module structure induced by augmentation,  and since $\Z_d$ acts trivially on the homology of~$\overline{\nu}(\widetilde{F}) \cong \widetilde{F} \times D^2$,  
the Mayer-Vietoris exact sequence for $\Sigma_d(F)$ yields
$$  0 \to \overbrace{H_2(\widetilde{F} \times S^1)}^{\cong \Lambda_0^{2g}} \to H_2(X_F^d) \to H_2(\Sigma_d(F)) \xrightarrow{} \Lambda_0  \to 0.$$
Applying~$-\otimes_{\Lambda} \C^\omega$ and using that~$\Lambda_0 \otimes_{\Lambda} \C^\omega  =0$ as well $\Tor_1^{\Z[\Z_d]}(\Lambda_0,\C^\omega)=0$,  we deduce from~\eqref{eq:EigenspaceTensor} that the inclusion induces an isomorphism
$$ H_2(X^d_F)_\omega \xrightarrow{\cong} H_2(\Sigma_2(F))_\omega. $$ 
From now on,  we use the isomorphism~$H_2(X^d_F)_\omega \cong H_2(X_F;\C^\omega)$ with twisted homology; see e.g.~\cite[Proof of Lemma~3.6]{ConwayNagel}.
It thus remains to show that~$\dim_\C H_2(X_F;\C^\omega) = b_1(F)+b_2(M)$.
First, a calculation, using that $F$ is simple, shows that $H_1(X_F;\C^\omega)=0$.
Setting $b_i^\omega(X_F):=H_i(X_F;\C^\omega)$ and~$\chi^\omega(X_F)=\sum_i(-1)^i b_i^\omega(X_F)$,  the outcome now follows from an Euler characteristic calculation:
$$b_2^\omega(X_F)=\chi^\omega(X_F)=\chi(X_F)=(\chi(M)-1)+2g=b_2(M)+b_1(F).$$
We have thus obtained $|\sigma_j(\Sigma_d(F))| \leq \dim_\C H_2(\Sigma_d(F))_\omega=b_2(X_F)^\omega=b_1(F)+b_2(M)$, as required.
\end{proof}

\begin{remark}
Gilmer's formula involves~$-\sigma_K(e^{2\pi ij/d})$ instead of~$+\sigma_K(e^{2\pi ij/d})$.
We believe this is a minor error,  as can be verified by recalling that the left handed trefoil bounds a~$\Z_2$-disc in~$\C P^2$; recall Example~\ref{ex:CP2d=2} and Remark~\ref{rem:Trefoil}.
This was already noted implicitly in~\cite[Theorem~3.1]{YasuharaConnecting}, as well as in~\cite[Theorem 3.6]{ManolescuMarengonPiccirillo} and~\cite[Lemma 2.1]{MarengonMillerRayStipsicz}: each of these results cites Gilmer but with~$+\sigma_K(e^{2\pi ij/d})$ in place of~$-\sigma_K(e^{2\pi ij/d})$.
The appearance of this sign can be traced back to the proof of Lemma~\ref{lem:PrelimLT} and, more specifically to the decomposition~$\Sigma_d(S)=\Sigma_d(F)\cup -\Sigma_d(G)$.
\end{remark}

\subsection{The sufficient conditions}
\label{sub:Sufficient}

In this section, we distill Lee-Wilczy\'nski's splitting theorem, to list sufficient conditions for lifting a hyperbolic summand from the $\Z$-form to the $\Lambda$-form, in our case of interest. 
For completeness, we provide the reader with the details to finish
off the surgery argument (described in a slightly different
form by Lee-Wilczy\'nski's~\cite[Proposition 4.1 and page~410]{LWCommentarii} in the spherical case)
for finding a simple slice disc in the destabilised manifold $N$, given a simple slice disc in the stabilised manifold $N_k$.

\begin{proposition}
\label{prop:TopSplitting}
Let~$N$ be a simply-connected \(4\)-manifold with boundary \(S^3\), let~$x \in H_2(N, \partial N)$ be a nonzero class of divisibility~$d$, and let $K \subset S^3$ with $H_1(\Sigma_d(K))=0$ and 
\begin{equation}
\label{eq:SignatureHyp}
b_2(N)\geq \max_{0\leq j<d}\Bigg|\sigma (N)-\frac{2j(d-j)}{d^2} Q_N(x,x)+\sigma_K (e^{\frac{2\pi i j}{d}})\Bigg|.
\end{equation}
Let~$D \subset N \#^k S^2\times S^2$ be a simple disc representing~$x \oplus 0 \in H_2(N, \partial N) \oplus H_2(\#^k S^2\times S^2)$ with boundary $K$.
Assume the following conditions hold:
\begin{enumerate}
\item Freeness condition: the $\Lambda$-module $H_2(\Sigma_d(D))$ is free of rank $b_2(N)+2k.$
\item Splitting the form over $\Z$: there is an isomorphism
 \[\beta_0\colon (H_2(\Sigma_d(D))_0,(\lambda_{\Sigma_d(D)})_0,z_0) \cong (H_2(N),Q_N,i_*^{-1}(x)) \oplus H(\Z)^k,\]
 where $i_* \colon H_2(N) \to H_2(N,\partial N)$ is the inclusion induced map.
 \item  The evenness condition: the form $\lambda_{\Sigma_d(F)}$ is weakly even on the kernel of
\[\begin{tikzcd}
	{\eta \colon H_2(\Sigma_d(D))} & {H_2(\Sigma_d(D))_0} & {H_2(N)\oplus \Z^{2k} } & H_2(N).
	\arrow["{\pi_0}", from=1-1, to=1-2]
	\arrow["{\beta_0}", from=1-2, to=1-3]
	\arrow["{\proj_1}",from=1-3, to=1-4]
\end{tikzcd}\]
\end{enumerate}
Then $x$ is represented by a simple disc in $N$ with boundary~$K$.
\end{proposition}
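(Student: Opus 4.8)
The plan is to apply the Splitting Theorem (Theorem~\ref{thm:splitting}) to the pointed hermitian form $(P,\lambda,z)=(H_2(\Sigma_d(D)),\lambda_{\Sigma_d(D)},z)$ determined by the given disc $D\subset N_k$, and then to upgrade the resulting algebraic splitting off $H(\Lambda)^k$ to a geometric destabilisation. We may assume $k\geq1$, since for $k=0$ the disc $D$ already lies in $N$ and there is nothing to prove. First I would check the five hypotheses of Theorem~\ref{thm:splitting}. Hypothesis (1), that $z\in P^{\Z_d}$, is exactly Lemma~\ref{lem:FixedPointCondition}. Hypothesis (2), that $P$ is projective of rank $b+2k$ with $b,k\geq1$, is the Freeness condition, with $b:=b_2(N)\geq1$ because $x\neq0$ and $\partial N=S^3$ force $H_2(N)\neq0$. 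Hypothesis (3) is the signature condition \eqref{eq:SignatureHyp}: applying Lemma~\ref{lem:PrelimLT} with $M=N_k$ and $F=D$, and using $\sigma(N_k)=\sigma(N)$ and $Q_{N_k}([D],[D])=Q_N(x,x)$, identifies $\sigma_j(\lambda)=\sigma_j(\Sigma_d(D))$ with $\sigma(N)-\frac{2j(d-j)}{d^2}Q_N(x,x)+\sigma_K(e^{2\pi ij/d})$, so \eqref{eq:SignatureHyp} is precisely $b\geq\max_{0\leq j<d}|\sigma_j(\lambda)|$. Hypotheses (4) and (5) are the Splitting-over-$\Z$ and Evenness conditions verbatim. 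Finally, that $(P,\lambda,z)$ is nonsingular follows from Poincar\'e--Lefschetz duality together with the inclusion-induced isomorphism $H_2(\Sigma_d(D))\cong H_2(\Sigma_d(D),\partial\Sigma_d(D))$ of Example~\ref{ex:pointed}, which holds because $H_1(\Sigma_d(K))=0$.

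Theorem~\ref{thm:splitting} then yields an isometry of pointed hermitian forms $\alpha\colon(H_2(\Sigma_d(D)),\lambda,z)\cong(P',\lambda',z')\oplus H(\Lambda)^k$. The remaining, geometric, task is to realise the abstract hyperbolic summand $H(\Lambda)^k$ by a standard $\#^k(S^2\times S^2)$ embedded in $N_k$ disjointly from $D$. Because $D$ is simple, $H_1(N_k\setminus D)\cong\Z_d$ and $\pi_1(N_k\setminus D)\cong\Z_d$, so the associated $d$-fold cover is the universal cover of the complement and $\Sigma_d(D)$ is simply-connected. Writing $e_1,f_1,\dots,e_k,f_k$ for the hyperbolic generators of $H(\Lambda)^k$ pulled back into $H_2(\Sigma_d(D))$ along $\alpha^{-1}$, so that $\lambda(e_i,e_j)=\lambda(f_i,f_j)=0$ and $\lambda(e_i,f_j)=\delta_{ij}$, I would represent the classes $e_1,\dots,e_k$ by a $\Z_d$-equivariant family of disjoint framed embedded spheres in $\Sigma_d(D)$, disjoint from the branch locus $\widetilde{D}$ and each carrying an equivariant geometric dual realising $f_i$. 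This is the step that invokes the Freedman--Quinn sphere embedding theorem, which applies here because the relevant fundamental group $\Z_d$ is good, being finite.

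Since these spheres are equivariant and miss $\widetilde{D}$, they descend to $k$ disjoint embedded spheres in $N_k\setminus D$, each with trivial normal bundle and a geometric dual; each dual pair spans a standard $S^2\times S^2$ summand disjoint from $D$. Excising these $k$ summands exhibits $N_k=N\#^k(S^2\times S^2)$ with $D$ lying inside the $N$ summand, so $D$ is a disc in $N$ with boundary $K$. It represents $x$, as $[D]=x\oplus0$ and $D$ misses the $S^2\times S^2$ summands; and it stays simple, since connected sum with simply-connected summands leaves the complement's fundamental group unchanged, whence $\pi_1(N\setminus D)\cong\pi_1(N_k\setminus D)\cong\Z_d$. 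The main obstacle is precisely the geometric realisation: converting the purely algebraic isometry $\alpha$ into an equivariant, framed, embedded sphere system disjoint from the branch set that descends to a destabilising $\#^k(S^2\times S^2)$. Making this rigorous requires the full machinery of topological surgery over $\Z[\Z_d]$ and careful tracking to ensure both the class $x$ and the simplicity of the disc persist through the destabilisation; this is the disc-theoretic analogue of the surgery argument of Lee--Wilczy\'nski~\cite[Proposition~4.1]{LWCommentarii}.
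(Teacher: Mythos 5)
Your hypothesis-checking for Theorem~\ref{thm:splitting} is fine and your overall skeleton (split off $H(\Lambda)^k$, realise it by spheres away from $D$, destabilise) is the same as the paper's, but the proposal has genuine gaps at exactly the two points where the geometry happens. First, you never establish that the hyperbolic classes can be represented by spheres missing the branch locus. To apply the sphere embedding theorem one needs $\alpha^{-1}(H(\Lambda)^k)$ to lie in the image of $H_2(X_D^d)\to H_2(\Sigma_d(D))$, where $X_D^d$ is the universal cover of the disc exterior $X_D$; only then does simplicity give $\pi_2(X_D)\cong H_2(X_D^d)$ and allow Freedman's theorem to be applied \emph{downstairs} in $X_D$, where $\pi_1(X_D)\cong\Z_d$ is good (there is no equivariant embedding theorem in the simply-connected cover $\Sigma_d(D)$, which is where you propose to work). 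The paper proves this containment via the Mayer--Vietoris sequence $0\to H_2(X_D^d)\xrightarrow{j} H_2(\Sigma_d(D))\xrightarrow{\partial} H_1(\widetilde{D}\times S^1)\to 0$ together with the identification $\partial(a)=Q_{\Sigma_d(D)}(a,z)$: since $\alpha$ is an isometry of \emph{pointed} forms, $z$ is orthogonal to the hyperbolic summand, which therefore lies in $\ker(\partial)=\operatorname{im}(j)$. You assert ``disjoint from the branch locus'' without this homological input, which is what makes it possible.

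Second, and more seriously, your final step claims that excising the $k$ resulting $S^2\times S^2$ summands ``exhibits $N_k=N\#^k(S^2\times S^2)$ with $D$ lying inside the $N$ summand.'' Excision only produces \emph{some} simply-connected $4$-manifold $N'$ with $\partial N'\cong S^3$ containing the disc, with $Q_{N'}\oplus H(\Z)^k\cong Q_N\oplus H(\Z)^k$. Hyperbolic summands of unimodular $\Z$-forms do not cancel in general (e.g.\ $E_8\oplus E_8\oplus H(\Z)\cong D_{16}^+\oplus H(\Z)$ while $E_8\oplus E_8\not\cong D_{16}^+$), so one cannot conclude $N'\cong N$, let alone that the image of $D$ represents $x$. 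This is exactly what the part of Theorem~\ref{thm:splitting} you never use is for: the isometry $\beta_0'$ and the compatibility $\beta_0=(\beta_0'\oplus\id)\circ\alpha_0$, combined with Proposition~\ref{prop:SplitDownstairs} applied to the surgered manifold $N'$, identify $(H_2(N'),Q_{N'},(i')_*^{-1}([D']))$ with $(H_2(N),Q_N,i_*^{-1}(x))$ as pointed forms; then additivity of the Kirby--Siebenmann invariant and the Freedman--Quinn classification convert this isometry into a homeomorphism $N'\cong N$ rel boundary carrying $[D']$ to $x$, with simplicity preserved since surgery does not change $\pi_1(X_D)$. You flag this ``main obstacle'' and defer it to ``the full machinery of topological surgery,'' but that deferred step \emph{is} the substance of the proof, so as written the argument is incomplete.
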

\begin{proof}
The strategy of proof is to perform surgery on $N_k$ to destabilise back to $N$, and do so away from $D$, so that the stable slice disc is preserved in the destabilised manifold. To do this, we will use the splitting theorem (Theorem~\ref{thm:splitting}) to obtain a hyperbolic summand $H(\Lambda)^{\oplus k}$ for the pointed~$\Lambda$-form of $\Sigma_d(D)$. 
We will then show this summand lies in the image of~$H_2(X_D^d) \to H_2(\Sigma_d(D))$,  where~$X_D^d$ denotes the $d$-fold (unbranched) cover of the disc exterior~$X_D:=N_k \setminus \nu(D)$. 
As the disc~$D$ is simple, $X_D^d$ is the universal cover of~$X_D$, so the Hurewicz map is an isomorphism~$\pi_2(X_D)\cong H_2(X_D)$. 
We will thus be able to represent generators for $H(\Lambda)^{\oplus k}$ spherically in~$X_D$, for the desired surgery on~$N_k$, occuring away from $D$.

We begin by verifying that the splitting theorem (Theorem~\ref{thm:splitting}) can be applied. 
First, the assumption that $H_1(\Sigma_d(K))=0$ ensures that the $\Z$-valued intersection form~$Q_{\Sigma_d(D)}$ is nonsingular and some algebra shows that~$\lambda_{\Sigma_d(D)}$ is therefore also nonsingular.
Next, Lemma~\ref{lem:FixedPointCondition} ensures that~$z:=i_*^{-1}([\widetilde{D}]) \in H_2(\Sigma_d(D))$ is fixed by the $\Z_d$ action, confirming the first hypothesis of the splitting theorem. Proposition~\ref{prop:SignatureCondition} shows that~\eqref{eq:SignatureHyp} is a reformulation of the third assumption of the splitting theorem.
The remaining assumptions of the present theorem ensure that we can apply the splitting theorem.

The splitting theorem provides isometries~$\alpha$ and $\beta_0'$ that make the following diagram commute
\[
\begin{tikzcd}
	{(H_2(\Sigma_d(D)),\lambda_{\Sigma_d(D)},z)} &&& {(\Lambda^{b_2(N)},\lambda',z') \oplus (H(\Lambda),0)^{\oplus k} } \\
	\\
	&&& {(\Z^{b_2(N)},\lambda_0',z_0') \oplus (H(\Z),0)^{\oplus k} } \\
	\\
	{(H_2(\Sigma_d(D))_0,(\lambda_{\Sigma_d(D)})_0,z_0)} &&& {(H_2(N),Q_N,i_*^{-1}(x)) \oplus (H(\Z),0)^{\oplus k}.}
	\arrow["\alpha", from=1-1, to=1-4]
	\arrow["\cong"', from=1-1, to=1-4]
	\arrow["{\pi_0}"', from=1-1, to=5-1]
	\arrow["{\pi_0}", from=1-4, to=3-4]
	\arrow["\cong"',"{\alpha_0}", from=5-1, to=3-4, bend left =14]
	\arrow["\cong"', from=5-1, to=5-4]
	\arrow["{\beta_0}", from=5-1, to=5-4]
	\arrow["{\beta_0' \oplus \text{Id}}", from=3-4, to=5-4]
	\arrow["\cong"', from=3-4, to=5-4]
\end{tikzcd}
\]
In particular, the pointed hermitian form on~$H_2(\Sigma_d(D))$ splits as
\begin{equation}
\label{eq:PointedSplitting}
(H_2(\Sigma_d(D)),\lambda_{\Sigma_d(D)},z) 
\cong 
(\Lambda^{b_2(N)},\lambda',z') \oplus (H(\Lambda)^{\oplus k},0).
\end{equation}

We now show the $H(\Lambda)^{\oplus k}$ summand lies in the image of~$H_2(X_D^d) \to H_2(\Sigma_d(D))$. Consider the decomposition given by \(\Sigma_d(D)= X_D^d \cup \overline{\nu}(\widetilde{D})\). 
Since $\overline{\nu}(\widetilde{D}) \cong D^2 \times D^2$, the gluing occurs along the trivial circle bundle~$\partial \overline{\nu}(\widetilde{D}) \setminus \nu(K) \cong \widetilde{D} \times S^1$.
Consider the following section of the resulting Mayer-Vietoris sequence:
\begin{equation}
\label{eq:SESBranched}
0 \to H_2(X_D^d) \xrightarrow{j} H_2(\Sigma_d(D)) \xrightarrow{\partial} \overbrace{H_1(\widetilde{D} \times S^1)}^{\cong\Z} \to 0.
\end{equation}
Since~$j$ preserves the $\Z$-valued intersection forms, it preserves the $\Lambda$-valued intersections forms. 
It follows that $j$ induces an isometry
$$(H_2(X_D^d),\lambda_{X_D^d}) \cong (\im(j),\lambda_{\Sigma_d(D)}|_{\im(j)})=(\ker(\partial),\lambda_{\Sigma_d(D)}|_{\ker(\partial)}).$$
Arguing similarly to e.g.~\cite[Lemma 5.1]{ConwayPowell}  the connecting homomorphism in the Mayer-Vietoris sequence in~\eqref{eq:SESBranched} satisfies~\(\partial(a) =Q_{\Sigma_d(D)}^\partial(a,[\widetilde{D}])= Q_{\Sigma_d(D)}(a,z)\) for all \(a\in H_2(\Sigma_d(D))\).
It follows~that 
$$ \ker(\partial)=\{ y \in H_2(\Sigma_d(D)) \mid Q_{\Sigma_d(D)}(y,z)=0\}.$$
For brevity, we now use $\alpha$ to identify~$H_2(\Sigma_d(D)) =\Lambda^b \oplus \Lambda^{2k}$.
A short verification shows that~$\ker(\partial)=(\ker(\partial) \cap \Lambda^b) \oplus \Lambda^{2k}$, and
%
it follows that we obtain the direct sum decomposition
$$(H_2(X_D^d),\lambda_{X^d_D})
 \cong (\ker(\partial),\lambda_{\Sigma_d(D)}|_{\ker(\partial)})
= (\ker(\partial) \cap \Lambda^b, \lambda_{\Sigma_d(D)}|_{\ker(\partial)}) \oplus (H(\Lambda),0)^{\oplus k}.
 $$
Since $\Z_d$ is a good group, Freedman's Sphere Embedding Theorem~\cite{Freedman} (see \cite[p.~292]{DETBook} for a statement) implies we can represent the hyperbolic summand of $\pi_2(X_D) \cong H_2(X_D^d)$ by~$2k$ framed embedded spheres in $X_D$.
Perform surgery on these spheres in order to obtain a~$4$-manifold~$X'$ with~$\partial X'=\partial X_D$ and~$\pi_1(X')=\pi_1(X_D)$,
and so that the branched cover of~$N':=X' \cup (D\times D^2)$ along~$D':=D \times \{0\}$ has pointed equivariant intersection form isometric (via~$\alpha)$ to~$\lambda'$.
Here, slightly abusing notation, we will write~$z'$ both for the distinguished element of~$(\Lambda^{b_2(N)},\lambda',z')$ and for the homology class of the class determined by the lift of~$D'$ to~$\Sigma_d(D')$.

Set~$x':=[D'] \in H_2(N',\partial N')$ and write~$i' \colon N' \to (N',\partial N')$ for the inclusion map.
 Proposition~\ref{prop:SplitDownstairs} implies that the projection~$\Sigma_d(D') \to N'$ induces a pointed isometry~$(\lambda_{\Sigma'_d(D')})_0 \cong Q_{N'}$ leading to the isometries
\begin{align*}
(H_2(N'),Q_{N'},(i')_*^{-1}(x')) 
&\cong (H_2(\Sigma_d(D'))_0,(\lambda_{\Sigma'_d(D')})_0,z')
\xrightarrow{\alpha_0,\cong} (\Z^{b_2(N)},\lambda_0',z_0') \\
&\xrightarrow{\beta_0',\cong}  (H_2(N),Q_N,i_*^{-1}(x)). 
\end{align*}
By additivity of the Kirby-Siebenmann invariant~\cite{FreedmanQuinn} (see~\cite[Theorem~9.2]{FriedlNagelOrsonPowell} for a statement), we also obtain~$\ks(N')=\ks(X')=\ks(X_D)=\ks(N).$
Since $\partial N'=\partial N \cong S^3$, we can thus use work of Freedman and Quinn
(\cite[Theorem~1.5]{Freedman},~\cite[Chapter~10]{FreedmanQuinn})
 to realise this isometry by a homeomorphism~$N' \cong N$ rel. boundary that takes~$(i'_*)^{-1}([D'])$ to~$(i_*)^{-1}([D])$
and thus~$[D']$ to~$[D]=x$.
The image of~$D'$ by this homeomorphism yields a disc in~$N$ representing~$x$ with boundary~$K$.
\end{proof}

\section{The freeness condition}
\label{sec:H2Free}

The next three sections are devoted to establishing the three conditions listed in Proposition~\ref{prop:TopSplitting}.
We fix some notation that we will frequently refer to.
\begin{notation}
\label{not:TheUsual}
Let $N$ be a simply-connected $4$-manifold with boundary $S^3$,  set~$N_k:=N \#^k S^2 \times S^2$, and let~$K \subset \partial N \cong S^3$ be a knot with~$H_1(\Sigma_d(K))=0$.
Let $x \in H_2(N,\partial N)$
be a nonzero class of divisibility $d$, and let~$D \subset N_k$ be a disc with boundary~$K$ that represents the class~$x \oplus 0$. 
We write~\(\Sigma_d(D)\) for the \(d\)-fold branched cover of \(N_k\) over \(D\), as well as~$X_D$ for the exterior of~\(D\) in~\(N_k\), and~$X_D^d$ for the unbranched (universal)~\(d\)-fold cover of $X_D$.
\end{notation}

The goal of this section is to establish the first condition from Proposition~\ref{prop:TopSplitting}:~$H_2(\Sigma_d(D))$ is~$\Lambda$-free of rank $b_2(N)+2k$ (Proposition~\ref{prop:H2Free}).
After recalling some facts concerning modules over $\Lambda$ (Section~\ref{sub:AlgebraZZd}),  the proof has two main steps:
\begin{enumerate}
\item Prove that \(H_2(\Sigma_d(D))\) is projective (Section~\ref{sub:H2Proj}).
\item Prove that  \(H_2(\Sigma_d(D))\) is stably free (Section~\ref{sub:H2StablyFree}).
\end{enumerate}
The conclusion that~$H_2(\Sigma_d(D)) \cong \Lambda^{b_2(N)+2k}$ then follows promptly.
Our overall strategy follows Lee and Wilczy\'nski's approach in the closed case~\cite{LWCommentarii,LWKtheory}.

\subsection{Modules over $\Lambda$}
\label{sub:AlgebraZZd}

We record some facts concerning projective and stably free modules over group rings of finite cyclic groups.
In what follows,  modules are understood to be finitely generated.

\medbreak

Recall that free modules are stably free and stably free modules are projective.
The converses are in general false, but over $\Lambda$ there are some partial converses which we now collect.

\begin{theorem}
\label{thm:ModulesOverZZd}
~
\begin{enumerate}
\item Stably free $\Lambda$-modules are free.
\item A projective $\Lambda$-module $P$ is stably free if $P \otimes_{\Lambda} \Lambda_1$ is stably free.
\end{enumerate}
\end{theorem}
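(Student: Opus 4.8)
The plan is to treat the two statements separately, using throughout that $\Lambda=\Z[\Z_d]$ is a commutative, connected, reduced Noetherian ring which is module-finite over $\Z$, and hence of Krull dimension $1$.

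For the first statement, I would argue according to the (constant) rank $n$ of a stably free module $P$, say $P\oplus\Lambda^a\cong\Lambda^{a+n}$. When $n=0$, localizing at each prime shows $P=0$. When $n=1$, taking top exterior powers gives $\det(P)\cong\Lambda$ in $\operatorname{Pic}(\Lambda)$, and since a rank-one projective over a connected commutative ring is isomorphic to its own determinant, $P\cong\Lambda$. When $n\geq 2$, I would invoke Bass's cancellation theorem: as $\dim\Lambda=1$, the Bass stable range of $\Lambda$ is at most $2$, so projective modules of rank $\geq 2$ are cancellative, and cancelling $\Lambda^a$ from $P\oplus\Lambda^a\cong\Lambda^n\oplus\Lambda^a$ yields $P\cong\Lambda^n$. (Alternatively one can cite the Swan–Jacobinski cancellation theorem, which applies because $\Q[\Z_d]$ is a product of number fields and so satisfies the Eichler condition.)

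For the second statement, I would exploit the Rim square \eqref{eq:rim}, which is a Milnor (conductor) square since the map $\Lambda_1\to\Z_d$ is surjective. Milnor's Mayer–Vietoris sequence in low-degree $K$-theory then reads
\[ K_1(\Lambda)\to K_1(\Z)\oplus K_1(\Lambda_1)\xrightarrow{\psi}K_1(\Z_d)\xrightarrow{\partial}K_0(\Lambda)\to K_0(\Z)\oplus K_0(\Lambda_1)\to K_0(\Z_d). \]
Since $\Z$ is a PID and $\Z_d=\Z/d$ is a finite product of local Artinian rings, $\widetilde{K}_0(\Z)=\widetilde{K}_0(\Z_d)=0$; passing to reduced groups, the kernel of the base-change map $\widetilde{K}_0(\Lambda)\to\widetilde{K}_0(\Lambda_1)$, $[P]\mapsto[P\otimes_\Lambda\Lambda_1]$, equals $\im(\partial)=\coker(\psi)$. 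A projective $P$ with $P\otimes_\Lambda\Lambda_1$ stably free has $[P]$ in this kernel, so it suffices to prove that $\psi$ is surjective, whence $[P]=0$ and $P$ is stably free.

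The crux is therefore the surjectivity of $\psi$ onto $K_1(\Z_d)=(\Z/d)^\times$, and I expect this to be the main point. I would establish it by exhibiting explicit units in $\Lambda_1$: for each $a$ coprime to $d$, the element $u_a:=1+t+\cdots+t^{a-1}$ is a unit of $\Lambda_1$. Indeed, choosing $b$ with $ab\equiv 1\pmod d$ and using that $t^d=1$ and $\mathcal{N}=0$ hold in $\Lambda_1$, one computes $u_a\cdot(1+t^a+\cdots+t^{(b-1)a})=\sum_{m=0}^{ab-1}t^m=1$, since the sum telescopes into a multiple of $\mathcal{N}$ plus $t^{ab}=t$. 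Under $\Lambda_1\to\Z_d$ sending $t\mapsto 1$, the unit $u_a$ maps to $\overline{a}\in(\Z/d)^\times$, and as $a$ ranges over residues coprime to $d$ these exhaust $(\Z/d)^\times$, so $\psi$ is surjective. The only delicate bookkeeping is confirming that the Rim square is genuinely a Milnor square so that Milnor's sequence applies, and that the image of the boundary map is computed correctly; both are routine once the explicit units are in hand.
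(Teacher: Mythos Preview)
Your approach is correct and, for part~(2), essentially identical to the paper's. The paper also uses the Milnor Mayer--Vietoris sequence for the Rim square and reduces everything to showing that $K_1(\Lambda_1)\to K_1(\Z_d)$ is surjective; it cites Lam--Siu for this step, and the Lam--Siu argument is exactly the cyclotomic-unit computation you spell out. One small slip in your write-up: with $ab=kd+1$, the sum $\sum_{m=0}^{ab-1}t^m$ equals $k\mathcal{N}+t^{kd}=k\mathcal{N}+1$, not ``a multiple of $\mathcal{N}$ plus $t^{ab}=t$''; your stated conclusion that the product is $1$ in $\Lambda_1$ is correct, only the parenthetical justification is garbled. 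The paper's endgame is phrased slightly differently (it shows $(\proj_1)_*\colon K_0(\Lambda)\to K_0(\Lambda_1)$ is injective using that $K_0(\Z)\to K_0(\Z_d)$ is injective, and only then passes to $\widetilde{K}_0$), but this is equivalent to your reduced-$K_0$ formulation.

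For part~(1) the two proofs differ. The paper simply cites Jacobinski's cancellation theorem. Your argument via Krull dimension $1$, Bass stable range, and the determinant trick in rank one is correct and more self-contained; it exploits the commutativity of $\Lambda=\Z[\Z_d]$, which is all that is needed here, whereas the Jacobinski citation covers the broader class of orders satisfying the Eichler condition.
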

\begin{proof}
The first assertion is due to Jacobinski~\cite{Jacobinski}; we also refer to Johnson~\cite{Johnson} for clearer statements.
The second statement is implicit in~\cite[proof of Theorem 3.4]{LWCommentarii} but we give some details for the reader's convenience.

We begin by detailing Lee and Wilczy\'nski's argument that~$\proj_1 \colon \Lambda \to \Lambda_1$ induces an injection~$( \proj_1)_* \colon K_0(\Lambda) \to K_0(\Lambda_1)$; the result is well known when $d$ is a prime power (see e.g.~\cite[page 29]{MilnorKTheory}) and is stated in~\cite[page~400]{LWCommentarii} for arbitrary $d$, but we provide some additional references concerning some steps of Lee and Wilczy\'nski's proof.

The Mayer-Vietoris sequence associated to the Rim square in~\eqref{eq:rim} takes the form
 \[\begin{tikzcd}
	{K_1(\Lambda_1) \oplus K_1(\Z)} & {K_1(\Z_d)} & {K_0(\Lambda)} && {K_0(\Lambda_1) \oplus K_0(\Z)} & {K_0(\Z_d)\to 0.}
	\arrow["{f_1}", from=1-1, to=1-2]
	\arrow["\partial", from=1-2, to=1-3]
	\arrow["\begin{array}{c} \begin{array}{c} \bsm {(\proj_1)_*} \\ (\proj_0)_* \esm \end{array} \end{array}", from=1-3, to=1-5]
	\arrow["{f_0}", from=1-5, to=1-6]
\end{tikzcd}\]
The same argument as in~\cite[page 345 (c) and 349-350]{LamSiu} shows that $\eta_1:=f_1|_{K_1(\Lambda_1)}$ is surjective; here note that~\cite{LamSiu} assumes that $d$ is a prime, but the argument holds without this assumption.
It follows that~$\partial$ is the zero map leading to the short exact sequence
\[\begin{tikzcd}
	0 & {K_0(\Lambda)} && {K_0(\Lambda_1) \oplus K_0(\Z)} & {K_0(\Z_d)} & 0.
	\arrow[from=1-1, to=1-2]
	\arrow["\begin{array}{c} \begin{array}{c} \bsm {(\proj_1)_*} \\( \proj_0)_* \esm \end{array} \end{array}", from=1-2, to=1-4]
	\arrow["{f_0}", from=1-4, to=1-5]
	\arrow[from=1-5, to=1-6]
\end{tikzcd}\]
Since $\eta_0 =f_0|_{K_0(\Z)}\colon K_0(\Z) \to K_0(\Z_d)$ is injective~\cite[Example 1.5.10(a)]{Rosenberg},  the exactness of this sequence ensures that~\((\proj_1)_*\) is also injective, as required.

We now explain the argument, implicit in~\cite{LWCommentarii}, that the injectivity of~$K_0(\Lambda) \to K_0(\Lambda_1)$ suffices to conclude.
For this, we recall that the \emph{reduced projective class group}~$\widetilde{K}_0(R)$ of a commutative ring $R$ is the kernel of the rank map~$K_0(R)\to H_0(R)$~\cite[Definition~II.2.3]{WeibelKBook}. 
The inclusion~$H_0(R)\subset K_0(R)$ splits the rank map, so we obtain an isomorphism~$K_0(R)\cong H_0(R)\oplus \widetilde{K}_0(R)$. 
It follows that stably free modules are trivial in $\widetilde{K}_0(R)$ and,  for $R=\Z[G]$ with $G$ finite,  
the converse is also true: this uses a theorem of Swan (see~\cite[Example~II.2.4]{WeibelKBook}),  according to which, when~$G$ is finite, there is an alternative description of~$\widetilde{K}_0(\Z[G])$ as the quotient~$\widetilde{K}_0(\Z[G])=K_0(\Z[G])/K_0(\Z)$, and in this case also~$H_0(R)\cong\Z$.

The conclusion now follows.
Indeed, the injection~$(\proj_1)_*\colon  K_0(\Lambda) \to K_0(\Lambda_1)$ induces an injection~$(\proj_1)_*\colon \widetilde{K}_0(\Lambda) \to \widetilde{K}_0(\Lambda_1)$, 
the assumption that $P \otimes_{\Lambda} \Lambda_1$ is stably free over $\Lambda_1$ implies that~$(\proj_1)_*([P])=0 \in \widetilde{K}_0(\Lambda_1)$,  so $[P]=0 \in \widetilde{K}_0(\Lambda)$ and thus~$P$ is stably free.
\end{proof}

\subsection{The module  \(H_2(\Sigma_d(D))\) is projective.}\label{sub:H2Proj}

We carry out the first step towards proving~$H_2(\Sigma_d(D))$ is~$\Lambda$-free of rank $b_2(N)+2k$ (Proposition~\ref{prop:H2Free}). 

\begin{proposition}
\label{prop:SphereStablyFree}
    The \(\Lambda\)-module \(H_2(\Sigma_d(D))\) is projective.
\end{proposition}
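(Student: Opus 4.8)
The plan is to exhibit $H_2(\Sigma_d(D))$ as the single nonvanishing homology module of a bounded complex of free $\Lambda$-modules, and then to extract projectivity from a cohomological triviality argument. Since $D$ is simple, $X_D^d$ is the universal cover of $X_D$, so the $\Z_d$-action on $X_D^d$ is free, and $\Sigma_d(D)$ is simply-connected (as in the proof of Proposition~\ref{prop:SignatureCondition}). Let $\overline{\nu}(\widetilde{D}) \cong D^2 \times D^2$ be the $\Z_d$-invariant tubular neighbourhood of the branch set, and consider the relative cellular chain complex $D_* := C_*(\Sigma_d(D), \overline{\nu}(\widetilde{D}))$. Choosing a $\Z_d$-CW structure in which $\overline{\nu}(\widetilde{D})$ is an invariant subcomplex, every relative cell lies in the free part $\Sigma_d(D) \setminus \overline{\nu}(\widetilde{D}) = X_D^d$, so each $D_n$ is a finitely generated free $\Lambda$-module and $D_*$ is bounded.

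First I would compute the homology of $D_*$, which is a sequence of $\Lambda$-modules. As $\overline{\nu}(\widetilde{D})$ is $\Z_d$-equivariantly contractible, $H_*(\overline{\nu}(\widetilde{D}))$ is $\Z$ concentrated in degree $0$ (with trivial action), so the long exact sequence of the pair yields $H_n(\Sigma_d(D),\overline{\nu}(\widetilde{D})) \cong H_n(\Sigma_d(D))$ for $n \geq 2$ and $0$ for $n \leq 1$, using simple connectivity of $\Sigma_d(D)$. Now $H_4(\Sigma_d(D)) = 0$ since $\Sigma_d(D)$ has nonempty boundary, and a short computation with Poincaré--Lefschetz duality and the long exact sequence of $(\Sigma_d(D), \partial \Sigma_d(D))$, using $H_1(\partial \Sigma_d(D)) = H_1(\Sigma_d(K)) = 0$, shows $H_3(\Sigma_d(D)) = 0$; this is the only point at which the hypothesis $H_1(\Sigma_d(K)) = 0$ is needed. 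Hence the homology of $D_*$ is concentrated in degree $2$, where it is $H_2(\Sigma_d(D))$.

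It remains to deduce projectivity. Each $D_n$ is free, hence cohomologically trivial, so the Tate hypercohomology $\widehat{\mathbb{H}}^*(G'; D_*)$ vanishes for every subgroup $G' \leq \Z_d$. Since the homology of $D_*$ is concentrated in degree $2$, the complex $D_*$ is quasi-isomorphic to $H_2(\Sigma_d(D))$ placed in degree $2$; as Tate hypercohomology is a quasi-isomorphism invariant, this gives $\widehat{H}^*(G'; H_2(\Sigma_d(D))) = 0$ for every $G' \leq \Z_d$ and in every degree, i.e.\ $H_2(\Sigma_d(D))$ is cohomologically trivial. Finally, $H_2(\Sigma_d(D))$ is $\Z$-free because $\Sigma_d(D)$ is simply-connected, and a $\Z$-free cohomologically trivial $\Z[\Z_d]$-module is projective (Rim--Swan; see e.g.\ Brown's book on group cohomology). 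The main obstacle is precisely this last homological step: over $\Lambda = \Z[\Z_d]$, which is neither regular nor a domain, homology concentrated in a single degree only guarantees finite projective dimension, so upgrading to genuine projectivity requires both the cohomological triviality computation and $\Z$-freeness, and the argument must be run uniformly in $d$ rather than only for prime powers.
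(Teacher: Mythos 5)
Your proof is correct in substance, but it takes a genuinely different route from the paper's. The paper doubles the pair $(N,D)$ along the identity to get a closed simply-connected $4$-manifold $V$ containing the sphere $S=D\cup D$, observes that $\Sigma_d(S)=\Sigma_d(D)\cup_{\Sigma_d(K)}\Sigma_d(D)$, and uses the folding map $\Sigma_d(S)\to\Sigma_d(D)$ to split off $H_2(\Sigma_d(D))$ as a direct summand of $H_2(\Sigma_d(S))$; projectivity then follows by citing Lee--Wilczy\'nski's closed-case result \cite[page 399]{LWCommentarii} and the fact that a summand of a projective module is projective. You instead work directly in the bounded setting: a bounded complex of finitely generated free $\Lambda$-modules computing $H_*(\Sigma_d(D),\overline{\nu}(\widetilde{D}))$, concentration of homology in degree $2$, dimension shifting in Tate cohomology to get cohomological triviality over every subgroup of $\Z_d$, and then $\Z$-freeness plus Rim's theorem (the same \cite[Theorem 8.10]{BrownCohomology} invoked in the paper's remark) to conclude projectivity. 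In effect you have transplanted Lee--Wilczy\'nski's underlying closed-case argument to the relative setting, which buys self-containedness: the paper's proof is shorter but outsources the real content (cohomological triviality) to the closed case, whereas yours proves it directly and avoids the doubling trick entirely.

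Two points of bookkeeping. First, your appeal to ``a $\Z_d$-CW structure in which $\overline{\nu}(\widetilde{D})$ is an invariant subcomplex'' is glib in the topological category, where such equivariant cell structures are not freely available; but the step is easily repaired. By excision, $H_*(\Sigma_d(D),\overline{\nu}(\widetilde{D}))\cong H_*(X_D^d,\widetilde{D}\times S^1)$ as $\Lambda$-modules; the $\Z_d$-action on the pair $(X_D^d,\widetilde{D}\times S^1)$ is free, the compact manifold pair downstairs has the homotopy type of a finite CW pair, and the $\Lambda$-chain complex of the induced cover of that CW pair is the bounded complex of finitely generated free $\Lambda$-modules you need. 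Second, your parenthetical that $H_1(\Sigma_d(K))=0$ is needed for $H_3(\Sigma_d(D))=0$ is not quite right: simple connectivity of $\Sigma_d(D)$ and connectedness of $\Sigma_d(K)$ already give $H_3(\Sigma_d(D))\cong H^1(\Sigma_d(D),\partial\Sigma_d(D))=0$, so projectivity in fact requires no hypothesis on the branched cover of the knot (the same is true of the paper's proof). Neither point is a genuine gap.
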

\begin{proof}
Consider the double~$(V,S):=(N,D) \cup_{\id} (N,D)$, where~$V$ is now a closed simply-connected~$4$-manifold containing the sphere~$S$.
By van Kampen, the exterior~$X_S=X_D \cup_{X_K} X_D$ of~$S$ has~$\pi_1(X_S) \cong \Z_d$, allowing us to consider the (simply-connected) \(d\)-fold branched cover~$\Sigma_d(S)$.
Here, $X_K$ denotes the exterior of $K$.
Observe that  \(\Sigma_d(S) = \Sigma_d(D) \cup_{\Sigma_d(K)} \Sigma_d(D).\) The inclusion of one side of this double determines a pair~\((\Sigma_d(S),\Sigma_d(D))\). Consider the following portion of the corresponding long exact sequence:
\[\begin{tikzcd}
	0 & {H_2(\Sigma_d(D))} & {H_2(\Sigma_d(S))} & {H_2(\Sigma_d(S),\Sigma_d(D))} & 0.
	\arrow[from=1-1, to=1-2]
	\arrow[from=1-2, to=1-3, "\iota_*"]
	\arrow[from=1-3, to=1-4]
	\arrow[from=1-4, to=1-5]
\end{tikzcd}\]
Since \(\Sigma_d(S)\) is the double of \(\Sigma_d(D)\), we can define a ``folding'' map \(s\colon \Sigma_d(S) \to \Sigma_d(D)\) which maps a point in \(\Sigma_d(S)\) to its corresponding point in \(\Sigma_d(D)\). It is straightforward to check that~$s_*$ is a splitting,~\(s_*\circ\iota_* = \id_{\Sigma_d(D)}\), and so the sequence splits:
$$  H_2(\Sigma_d(S)) \cong  H_2(\Sigma_d(D))\oplus H_2(\Sigma_d(S),\Sigma_d(D)). $$
In \cite[page 399]{LWCommentarii}, it is shown that \(H_2(\Sigma_d(S))\) is projective, and thus so is~\(H_2(\Sigma_d(D))\).
\end{proof}

\begin{remark}
The proof in~\cite[page 399]{LWCommentarii} relies on several facts from group cohomology which are not referenced. 
We give some indications for the interested reader.
First, given a simple sphere~$S \subset X$ in a closed simply-connected~$4$-manifold~$X$ representing a divisibility $d$ homology class~$x \in H_2(X)$, since~$H_2(\Sigma_d(S))$ is free abelian, it suffices to prove that~$H_2(\Sigma_d(S))$ is cohomologically trivial~\cite[Theorem 8.10]{BrownCohomology}.
This further reduces to proving that~$H^i(\Z_{p^r};H_2(\Sigma_d(S)))=0$ for~$p$ prime and~$i=2,3$ thanks to~\cite[Theorem~8.9]{BrownCohomology} (and the fact that Tate homology agrees with usual group homology in positive degrees~\cite[page~134]{BrownCohomology}).
We also note that for~$I_d$ the augmentation ideal, the equality~$H^{i+1}(\Z_{p^r};I_d)=H^i(\Z_{p^r};\Z)$ on~\cite[page 399]{LWCommentarii} uses that~$\Z[\Z_d]$ is a  free~$\Z[\Z_{p^r}]$-module (see~\cite[pages 13-14]{BrownCohomology}) to deduce that~$H^i(\Z_{p^r};\Z[\Z_d])=0$ for~$i>0$.
\end{remark}

\subsection{The module  \(H_2(\Sigma_d(D))\) is stably free.}
\label{sub:H2StablyFree}

This section proves that \(H_2(\Sigma_d(D))\) is stably free.
First however,  we show that $\pi_2(X_D) \cong H_2(X_D^d)$ is stably isomorphic to the augmentation ideal~$I=\ker(\Lambda\xrightarrow{\varepsilon}\Z)$.
The argument originated from discussions with Daniel Kasprowski and relies on the following observation.

\begin{lemma}
\label{lem:2Complex}
 If a smooth $4$-manifold $X$ has nonempty boundary and~$\pi_1(\partial X) \to \pi_1(X)$ is surjective, then~$X$ has the homotopy type of a $2$-complex.
\end{lemma}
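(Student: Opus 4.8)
The plan is to produce an explicit handle decomposition of $X$ with no handles of index $3$ or $4$, which immediately gives the homotopy type of a $2$-complex. Since $X$ is a smooth compact $4$-manifold with nonempty boundary, I would start from a handle decomposition built on a single $0$-handle (using connectivity of $X$). The key leverage is the surjectivity hypothesis $\pi_1(\partial X) \to \pi_1(X)$, which I would exploit by turning the handle decomposition upside down.

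First I would recall that a handle decomposition of $X$, read from $\partial X$ inward (i.e.\ the dual decomposition obtained by viewing $X$ as built on a collar of $\partial X$), expresses $X$ as $\partial X \times [0,1]$ with handles of indices $1,2,3,4$ attached. The dual $k$-handles of $X$ are the $(4-k)$-handles in the original decomposition. My goal is to eliminate the dual $3$- and $4$-handles in this upside-down picture, since these correspond precisely to the $1$- and $0$-handles one needs to cancel in order to realize $X$ as a $2$-complex relative to $\partial X$; more directly, I want a decomposition of $X$ relative to $\partial X$ using only $1$- and $2$-handles, because attaching handles of index $\le 2$ to $\partial X \times [0,1]$ yields a space homotopy equivalent to a CW complex built from $\partial X$ by attaching cells of dimension $\le 2$, and combined with the fact that one can collapse, this gives the $2$-complex conclusion.

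The central step is the cancellation of the top-dimensional handles. In the dual decomposition, the $4$-handles of $X$ (dual $0$-handles) can be cancelled against $3$-handles because $X$ is connected; this is standard handle cancellation. The crucial use of the hypothesis is in eliminating the remaining dual $3$-handles: each such handle is attached along a $2$-sphere, but more to the point, the surjectivity $\pi_1(\partial X)\to \pi_1(X)$ guarantees that after passing through the $1$- and $2$-handles (built on $\partial X$) the fundamental group is already generated by loops in $\partial X$, so there is no $\pi_1$-obstruction forcing the presence of $3$-handles. Concretely, I would argue that the surjectivity allows the attaching data of the dual $3$-handles to be homotoped and then cancelled against dual $2$-handles, or simply that a minimal handle decomposition relative to $\partial X$ under this hypothesis can be arranged with handles only in indices $1,2$. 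The main obstacle I anticipate is making this cancellation argument clean and rigorous: handle cancellation in dimension $4$ is delicate, and one must be careful that cancelling $3$-handles does not require the Whitney trick (which would fail topologically). I expect the correct framing is to argue homotopy-theoretically rather than via explicit handle moves --- namely, to show directly that $X$ deformation retracts onto a subcomplex of dimension $2$ --- which sidesteps the subtleties of geometric handle cancellation.

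An alternative and cleaner route, which I would likely favor for the writeup, is purely homotopy-theoretic. Since $X$ is homotopy equivalent to a finite CW complex, it suffices to show $X$ has the homotopy type of a complex of dimension $\le 2$, and by Wall's results on the geometric dimension of finite complexes this can be detected cohomologically: a finite complex with free fundamental group or with vanishing obstructions in degrees $\ge 3$ has geometric dimension $\le 2$. The hypothesis that $\pi_1(\partial X)\to \pi_1(X)$ is surjective, together with Poincaré--Lefschetz duality for the pair $(X,\partial X)$, forces $H_i(X;\mathbb{Z}[\pi_1(X)])$ and the relevant cohomology to vanish above degree $2$, because the dual handles of index $3,4$ contribute to homology of the pair $(X,\partial X)$ in low degrees which the surjectivity kills. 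I expect the delicate point here to be the passage from homological dimension $\le 2$ to geometric (CW) dimension $\le 2$, which in general requires $\pi_1$ to satisfy suitable finiteness and which for surjective-$\pi_1$-from-boundary situations follows from the absence of index-$3$ dual handles.
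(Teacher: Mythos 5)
Your handle-theoretic route has the duality running in the wrong direction, and this is fatal. To conclude that $X$ is homotopy equivalent to a $2$-complex one needs an \emph{absolute} handle decomposition of $X$ (built from a $0$-handle) with handles of index at most $2$; equivalently, turning the decomposition upside down, a decomposition \emph{relative to $\partial X$} with handles of index at least $2$. The paper's proof does exactly this: start with a relative decomposition with handles of index $1,2,3,4$, use the surjectivity of $\pi_1(\partial X)\to\pi_1(X)$ to \emph{trade each relative $1$-handle for a $3$-handle}, and then flip, obtaining an absolute decomposition with only $0$-, $1$- and $2$-handles. You instead aim to eliminate the relative (``dual'') $3$- and $4$-handles, i.e.\ you want $X = \partial X\times[0,1]\cup(\text{$1$- and $2$-handles})$. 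Such a decomposition cannot exist: flipping it would give an absolute decomposition of the nonempty compact manifold $X$ with no $0$-handles (in particular, your claim that connectivity lets one cancel \emph{all} dual $4$-handles is false --- connectivity reduces them to exactly one, dual to the single absolute $0$-handle, and that one must remain). Moreover, even granting such a decomposition, your conclusion would not follow: a complex obtained from the closed $3$-manifold $\partial X$ by attaching cells of dimension $\le 2$ always has $H_3\neq 0$, since the fundamental cycle of $\partial X$ remains a cycle and no new $3$-chains or $4$-chains are created; so it is never homotopy equivalent to a $2$-complex, and ``one can collapse'' cannot repair this. Relatedly, the hypothesis is being used backwards: $\pi_1$-surjectivity from the boundary is the standard condition for \emph{removing relative $1$-handles} (at the cost of creating $3$-handles); it is not a mechanism for removing $3$-handles.

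Your alternative homotopy-theoretic route also has a genuine gap, which you partially flag yourself. Wall's results convert cohomological dimension $\le n$ into geometric dimension $\le n$ only for $n\ge 3$; for $n=2$ the passage from ``chain complex concentrated in degrees $\le 2$'' to ``homotopy equivalent to a $2$-complex'' is precisely Wall's D(2)-type realization problem, which is open for general fundamental groups. The group $\pi_1(X)$ here is arbitrary (finite cyclic in the paper's application, so in particular not free), so no special case rescues the argument. In short, neither of your routes reaches the statement: what the geometric hypothesis buys, and what your write-up never exploits, is the trade-then-flip mechanism --- remove the relative $1$-handles using $\pi_1$-surjectivity so that, after turning the decomposition upside down, only handles of index $\le 2$ remain.
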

\begin{proof}
The manifold $X$ can be obtained from $\partial X$ by attaching $1$-,$2$-,$3$-, and $4$-handles.
Since the inclusion induced map~$\pi_1(\partial X) \to \pi_1(X)$ is surjective,  one can trade $1$-handles for $3$-handles.
Turning the handle decomposition upside down,  it follows that $X$ admits a handle decomposition with only~$0$-,$1$-, and $2$-handles so the conclusion follows.
\end{proof}

Next,  as in~\cite[page 398]{LWCommentarii}, we analyze $\pi_2(X_D) \cong H_2(X_D^d)$.
\begin{lemma}
\label{lem:StablyAugmentation}
    The \(\Lambda\)-module \(H_2(X_D^d)\) is stably isomorphic to the augmentation ideal~\(I\).
\end{lemma}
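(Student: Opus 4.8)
The plan is to show that the exterior $X_D$ has the homotopy type of a finite $2$-complex, read off the cellular chain complex of its universal cover, and recognise $H_2(X_D^d)$ as a third syzygy of the trivial $\Lambda$-module $\Z$, which the periodic resolution \eqref{eq:FreeResolution} then identifies with $I$ up to stable isomorphism. I first record the homotopy-theoretic setup. Since $D$ is a simple disc of divisibility $d$, its disc group $\pi_1(X_D)$ is abelian and hence equal to $H_1(X_D)\cong\Z_d$, generated by the meridian $\mu$ of $D$; consequently $X_D^d$ is the universal cover and $H_2(X_D^d)\cong\pi_2(X_D)$ carries its natural $\Lambda$-module structure. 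The meridian $\mu$ lies on $\partial X_D$ (inside the vertical boundary $\partial\overline{\nu}(D)$), so the inclusion induced map $\pi_1(\partial X_D)\to\pi_1(X_D)$ is onto.

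With surjectivity on $\pi_1$ in hand, I would invoke Lemma~\ref{lem:2Complex} to conclude that $X_D$ has the homotopy type of a finite $2$-complex $Y$; after collapsing a spanning tree we may assume $Y$ has a single $0$-cell. Passing to the cellular chain complex of the universal cover $\widetilde{Y}\simeq X_D^d$, which consists of finitely generated free $\Lambda$-modules concentrated in degrees $0,1,2$, and using that $\widetilde{Y}$ is simply connected with $H_0=\Z$, I obtain the exact sequence
\[
0 \to H_2(X_D^d) \to C_2 \xrightarrow{\partial_2} C_1 \xrightarrow{\partial_1} \Lambda \xrightarrow{\varepsilon} \Z \to 0 ,
\]
where $\varepsilon$ is the augmentation. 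This exhibits $H_2(X_D^d)=\ker\partial_2$ as a third syzygy of $\Z$ over $\Lambda$.

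Finally I would compare this with the truncation of the periodic free resolution \eqref{eq:FreeResolution}, namely the exact sequence $0\to I\to\Lambda\xrightarrow{\mathcal{N}}\Lambda\xrightarrow{1-t}\Lambda\xrightarrow{\varepsilon}\Z\to 0$, whose top syzygy is $\ker(\Lambda\xrightarrow{\mathcal{N}}\Lambda)=\{a\in\Lambda\mid\mathcal{N}a=0\}=I$. Both are length‑three resolutions of $\Z$ by finitely generated free $\Lambda$-modules, so the generalised Schanuel lemma yields an isomorphism $H_2(X_D^d)\oplus\Lambda^{a}\cong I\oplus\Lambda^{b}$ for suitable $a,b$; that is, $H_2(X_D^d)$ is stably isomorphic to $I$, as claimed.

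I expect the main obstacle to be the geometric input of the second step: producing the finite $2$-complex homotopy type, which is exactly what Lemma~\ref{lem:2Complex} supplies once the meridian is used to verify $\pi_1$-surjectivity. Everything downstream is formal homological algebra, provided one is careful about two points that are easy to get wrong: ensuring the chain complex genuinely terminates in degree $2$ (so that $H_2(X_D^d)$ sits in syzygy degree three, not higher), and tracking the period‑two behaviour of \eqref{eq:FreeResolution} so that the third syzygy is $I$ rather than the trivial module $\Z$.
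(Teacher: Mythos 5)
Your homological algebra is sound and is essentially the paper's own mechanism (Schanuel's lemma comparing the chain complex of the universal cover against the periodic resolution~\eqref{eq:FreeResolution}), and your verification of $\pi_1$-surjectivity via the meridian is correct. But there is a genuine gap at the step you yourself identify as the geometric crux: you invoke Lemma~\ref{lem:2Complex} directly for $X_D$, and that lemma is stated, and proved, only for \emph{smooth} $4$-manifolds --- its proof uses a handle decomposition, and a compact topological $4$-manifold admits a handle decomposition if and only if it is smoothable. The paper works in the topological category: $N$ is a topological $4$-manifold and $D$ is merely locally flat, so $X_D$ need not be smoothable. Concretely, additivity of the Kirby--Siebenmann invariant across $N_k = X_D \cup \overline{\nu}(D)$ with $\overline{\nu}(D) \cong D^2 \times D^2$ gives $\ks(X_D) = \ks(N_k) = \ks(N)$, and Theorem~\ref{thm:Main} is explicitly meant to cover $\ks(N)=1$ (e.g.\ $N = {*\C P^2}\setminus \operatorname{Int}(B^4)$). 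For such $N$ the manifold $X_D$ is not smoothable, no handle decomposition exists, and your second step fails outright.

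The paper's proof is structured precisely to route around this. It first uses only that $X_D$ has nonempty boundary to get a $3$-complex homotopy type (no smoothness needed), producing an exact sequence with $\ker(d_2)$ in the syzygy position, and proves $\ker(d_2) \cong_s I$ by Schanuel --- this is your formal part. It then splits into cases: if $X_D$ is smooth, Lemma~\ref{lem:2Complex} applies as in your argument; if not, it invokes Freedman--Quinn to make $X_D$ smoothable by connect-summing with copies of $S^2\times S^2$ (and a copy of Freedman's $E_8$-manifold when $\ks(X_D)=1$), observes that these summands are simply-connected and so only add \emph{free} $\Lambda$-summands to $H_2(-;\Lambda)$, hence preserve the stable isomorphism class, runs the $2$-complex argument on the smoothable manifold $X'$, and finishes with one more application of Schanuel. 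To repair your proposal you would need to add exactly this smoothing-after-stabilisation step; everything else you wrote can then be kept.
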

\begin{proof}
As $X_D$ has nonempty boundary, it has the homotopy type of a $3$-complex see e.g.~\cite[Theorem 3.16]{FriedlNagelOrsonPowell}.
Consider the singular $\Lambda$-chain complex of this $3$-complex:
$$ 0 \to C_3 \xrightarrow{ d_3} C_2 \xrightarrow{ d_2} C_1 \to C_0 \to 0.$$
As explained in~\cite[Proof of Proposition 2.2]{WilczynskiPeriodic}, 
this leads to a short exact sequence
$$0 \to \ker( d_2) \to H_2(X_D^d) \oplus C_2 \to \coker( d_3) \to 0.$$

We claim that~$\ker( d_2)$ is stably isomorphic to the augmentation ideal $I$.
To see this, write~$\mathcal{N} \in \Lambda$ for the norm element,  and consider the free $ \Lambda$-resolution of $\Z$ given by 
$$\ldots \xrightarrow{t-1} \Lambda \xrightarrow{\mathcal{N}}  \Lambda \xrightarrow{t-1} \Lambda \xrightarrow{\operatorname{aug}} \Z \to 0.$$
In particular note that $I=\im(t-1)=\ker(\mathcal{N})$, leading to the following two exact sequences:
\begin{align*}
0 \to  \ker(d_2) \to C_2 \xrightarrow{d_2} C_1 \xrightarrow{d_1} C_0 \xrightarrow{\operatorname{proj}} \Z \to 0 \\
0 \to I \to \Lambda \xrightarrow{\mathcal{N}}  \Lambda \xrightarrow{t-1} \Lambda \to \Z \to 0.
\end{align*}
Since the $C_i$ are free, Schanuel's lemma (see e.g.~\cite[Chapter VIII, Lemma 4.4]{BrownCohomology}) now implies that $\ker( d_2)$ is stably isomorphic to $I$.

The lemma will now be proved if we can show~$H_2(X_D^d) \cong H_2(X_D;\Lambda)$ is stably isomorphic to~$\ker( d_2)$.
In the case that $X_D$ is smooth, Lemma~\ref{lem:2Complex} implies that~$X_D$ has the homotopy type of a~$2$-complex. Thus in this case,~$d_3=0$ and~$C_2(X_D;\Lambda)=\coker(d_3)$ so that~$\ker(d_2)=H_2(X_D;\Lambda)$ and the lemma is proved.

If $X_D$ is not smooth, results of Freedman-Quinn \cite{FreedmanQuinn} (see~\cite[Theorem~9.9]{FriedlNagelOrsonPowell} for a statement), imply that it can be made smoothable by taking connected sums with copies of~$S^2 \times S^2$ and a copy of Freedman's~$E_8$ manifold if~$\operatorname{ks}(X_D)=1$. 
Call the resulting smoothable manifold $X'$. 
The addition of these connected summands does not change the fundamental group and so we still have~$H_0(X';\Lambda) \cong \Z$ and~$H_1(X';\Lambda)=0$. The addition of the connected summands adds free summands to $H_2(X_D;\Lambda)$, meaning there is a stable isomorphism $H_2(X_D;\Lambda)\cong_{s}H_2(X';\Lambda)$.

We compare our resolution of $\Z$ from above to the resolution of~$H_0(X';\Lambda) \cong \Z$ obtained by considering the chain complex of the universal cover of $X'$:
\begin{align*}
0 \to  \ker(d_2) \to C_2 \xrightarrow{d_2} C_1 \xrightarrow{d_1} C_0 \xrightarrow{\operatorname{proj}} \Z \to 0\\
0 \to \ker(d_2') \to C_2' \xrightarrow{d_2'} C_1' \xrightarrow{d_1'} C_0' \xrightarrow{\operatorname{proj}} \Z \to 0.
\end{align*}
Since the~$C_i$ and~$C_i'$ are free (and therefore projective),  Schanuel's lemma implies that
$$ \ker(d_2) \oplus C_2' \oplus C_1 \oplus C_0' \cong  \ker(d_2') \oplus C_2 \oplus C_1' \oplus C_0.$$
Since the~$C_i$ and~$C_i'$ are free, it follows that we have a stable isomorphism~$\ker(d_2)\cong_s \ker(d_2')$. On the other hand, as $X'$ is smoothable, the argument from above that used Lemma~\ref{lem:2Complex} shows that~$\ker(d'_2)=H_2(X';\Lambda)$,
and thus
$H_2(X_D;\Lambda)\cong_{s} H_2(X';\Lambda)=\ker(d_2')\cong_{s}\ker(d_2)\cong_s  I.
$
\end{proof}

\begin{remark}
\label{rem:LWStablyAugmentation}
We comment on Lee and Wilczynski's argument
 that for a simple sphere~$S \subset X$ in a closed manifold,~$\pi_2(X_S)\cong_s I$~\cite[page 398]{LWCommentarii}.
Their argument relies on prior work of Wilczynski~\cite[Proposition 2.3]{WilczynskiPeriodic} which makes use of the loop and suspension functors for~$\Lambda$-modules described in \cite[Page 515]{WallPeriodicProjective} and \cite{Heller} 
 and, implicitly, on certain of their 
exactness properties (note that the effect of these functors is only defined up to the addition with projectives).
Thus while it is plausible such an argument may carry through
to discs,  we believe that that the inclusion of a less sophisticated 
proof also has some appeal.
\end{remark}
\color{black}

We now prove most of the the main result of this section.

\begin{proposition}
\label{prop:SphereStablyFree}
    The \(\Lambda\)-module \(H_2(\Sigma_d(D))\) is stably free.
\end{proposition}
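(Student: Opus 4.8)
The plan is to apply Theorem~\ref{thm:ModulesOverZZd}\,(2): since we have already shown that $H_2(\Sigma_d(D))$ is projective, it suffices to prove that the base change $H_2(\Sigma_d(D))\otimes_\Lambda\Lambda_1$ is stably free over $\Lambda_1$. Working over $\Lambda_1$ is what makes the computation tractable, since the norm element $\mathcal N$ vanishes there.

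First I would tensor the Mayer--Vietoris sequence \eqref{eq:SESBranched} associated with the decomposition $\Sigma_d(D)=X_D^d\cup\overline\nu(\widetilde D)$ with $\Lambda_1$. As a sequence of $\Lambda$-modules it reads
\[
0\to H_2(X_D^d)\xrightarrow{j}H_2(\Sigma_d(D))\xrightarrow{\partial}\Lambda_0\to 0,
\]
where I identify $H_1(\widetilde D\times S^1)\cong\Z$ with $\Lambda_0$: the deck transformation acts on the normal circle of the contractible branch disc by a rotation, hence trivially on $H_1$. Using the free resolution \eqref{eq:FreeResolution} of $\Lambda_0=\Z$ together with $\mathcal N=0$ in $\Lambda_1$, the complex computing $\operatorname{Tor}^\Lambda_*(\Lambda_0,\Lambda_1)$ becomes $\cdots\to\Lambda_1\xrightarrow{0}\Lambda_1\xrightarrow{1-t}\Lambda_1\to 0$. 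A short check shows $1-t\colon\Lambda_1\to\Lambda_1$ is injective with cokernel $\Z_d$ (injectivity uses $I\cap(\mathcal N)=0$), so $\operatorname{Tor}_1^\Lambda(\Lambda_0,\Lambda_1)=0$ and $\Lambda_0\otimes_\Lambda\Lambda_1\cong\Z_d$. Tensoring therefore preserves exactness and yields
\[
0\to H_2(X_D^d)\otimes_\Lambda\Lambda_1\to H_2(\Sigma_d(D))\otimes_\Lambda\Lambda_1\to\Z_d\to 0.
\]

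Next I would identify the two outer terms. By Lemma~\ref{lem:StablyAugmentation}, $H_2(X_D^d)\cong_s I$, and from the resolution \eqref{eq:FreeResolution} the map $1-t$ induces an isomorphism of $\Lambda$-modules $\Lambda_1=\Lambda/(\mathcal N)\xrightarrow{\cong}\operatorname{im}(1-t)=I$. Tensoring the stable isomorphism $H_2(X_D^d)\cong_s I$ with $\Lambda_1$ then gives $H_2(X_D^d)\otimes_\Lambda\Lambda_1\cong_s\Lambda_1$. Moreover $E:=H_2(\Sigma_d(D))\otimes_\Lambda\Lambda_1$ is projective over $\Lambda_1$, being the base change of a projective $\Lambda$-module.

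The step I expect to be the main obstacle is concluding that $E$ is stably \emph{free}, despite the quotient $\Z_d$ being torsion (so the last sequence cannot simply split off a free summand). I would resolve this by comparing the short exact sequence above with the length-one free resolution $0\to\Lambda_1\xrightarrow{1-t}\Lambda_1\to\Z_d\to 0$: both present $\Z_d$ with projective middle terms, so Schanuel's lemma (e.g.\ \cite[Chapter~VIII, Lemma~4.4]{BrownCohomology}) gives
\[
E\oplus\Lambda_1\cong\big(H_2(X_D^d)\otimes_\Lambda\Lambda_1\big)\oplus\Lambda_1\cong_s\Lambda_1\oplus\Lambda_1.
\]
Hence $E$ is stably free over $\Lambda_1$, and Theorem~\ref{thm:ModulesOverZZd}\,(2) shows $H_2(\Sigma_d(D))$ is stably free over $\Lambda$, as claimed (Theorem~\ref{thm:ModulesOverZZd}\,(1) will then upgrade this to freeness, of the rank forced by an Euler characteristic count).
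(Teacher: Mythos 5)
Your proof is correct. It shares its skeleton with the paper's argument: both reduce, via Theorem~\ref{thm:ModulesOverZZd}(2), to showing that $H_2(\Sigma_d(D))\otimes_\Lambda\Lambda_1$ is $\Lambda_1$-stably free, and both rest on a Mayer--Vietoris sequence together with Lemma~\ref{lem:StablyAugmentation} and the isomorphism $I\cong\Lambda_1$. But the mechanism for disposing of the torsion term is genuinely different. The paper first dualises: using $H_1(\Sigma_d(K))=0$ it identifies $H_2(\Sigma_d(D))\cong H^2(\Sigma_d(D))$ and works with the cohomological version of \eqref{eq:SESBranched}, in which $\Lambda_0$ is the \emph{sub}module rather than the quotient; after tensoring with $\Lambda_1$, the map out of $\Lambda_0\otimes_\Lambda\Lambda_1\cong\Z_d$ lands in a projective, hence $\Z$-torsion-free, module and so must vanish, giving $H_2(\Sigma_d(D))\otimes_\Lambda\Lambda_1\cong H^2(X_D^d)\otimes_\Lambda\Lambda_1$, which is stably free; the price of that route is the identification $H^2(X_D^d)\cong_s I$, which needs the universal coefficient spectral sequence and the self-duality $I^*\cong I$. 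You instead stay in homology, where $\Lambda_0$ is the quotient: there you must check that exactness survives tensoring, which your computation $\Tor_1^\Lambda(\Lambda_0,\Lambda_1)=\ker(1-t\colon\Lambda_1\to\Lambda_1)=0$ handles correctly (injectivity of $1-t$ on $\Lambda_1$ indeed follows from $I\cap(\mathcal{N})=0$), and you then eliminate the torsion quotient $\Z_d$ by playing your sequence against the free resolution $0\to\Lambda_1\xrightarrow{1-t}\Lambda_1\to\Z_d\to 0$ via Schanuel's lemma, whose hypotheses are in place since $E$ is projective as the base change of a projective module. What each approach buys: yours avoids Poincar\'e duality, the spectral sequence, and $I^*\cong I$ entirely (indeed it does not invoke $H_1(\Sigma_d(K))=0$ at this step), making it the more elementary and self-contained argument; the paper's avoids the Tor computation and Schanuel, letting torsion-freeness of projectives do all the work in one line. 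Both are complete and reach the same conclusion.
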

\begin{proof}
We know from Section~\ref{sub:H2Proj} that~$H_2(\Sigma_d(D))$ is $\Lambda$-projective.
By Theorem~\ref{thm:ModulesOverZZd},  in order to show that~$H_2(\Sigma_d(D))$ is $\Lambda$-stably free, it suffices to prove that~$H_2(\Sigma_d(D)) \otimes_{\Lambda} \Lambda_1$ is~$\Lambda_1$-stably free.
Since $H_1(\Sigma_d(K))=0$,  the exact sequence of the pair and duality yield 
$$ H_2(\Sigma_d(D) \cong H_2(\Sigma_d(D),\partial \Sigma_d(D)) \cong  H^2(\Sigma_d(D)).$$
We will therefore show that $H^2(\Sigma_d(D)) \otimes_\Lambda \Lambda_1$ is~$\Lambda_1$-stably free.
\color{black}

Consider the short exact sequence in cohomology, analogous to the one in homology from~\eqref{eq:SESBranched}:
\[\begin{tikzcd}
	0 & {H^1(\widetilde{D}\times S^1)} & {H^2(\Sigma_d(D))} & {H^2(X_D^d)} & 0.
	\arrow[from=1-1, to=1-2]
	\arrow[from=1-2, to=1-3]
	\arrow[from=1-3, to=1-4]
	\arrow[from=1-4, to=1-5]
\end{tikzcd}\]
We claim that $H^2(X_D^d) \cong_s I$.
We first note that $I^* \cong I$: indeed the free resolution in~\eqref{eq:FreeResolution} yields~$I=\im(t-1)$ and~$\im(\cdot \mathcal{N} \colon \Lambda \to\Lambda) \cong \Lambda_0$, and
dualising $0 \to \im(\cdot \mathcal{N}) \to \Lambda \xrightarrow{t-1} I \to 0$ yields 
$$
\xymatrix{
 \im(\cdot \mathcal{N})^* & \Lambda^* \ar[l]& I^* \ar[l]& 0.  \ar[l] \\
\Lambda_0 \ar[u]^{\cong}& \Lambda \ar[l]_{\aug}\ar[u]^{\cong}&
}
$$
The universal coefficient spectral sequence (in which we use that~$H^i(\Z_d;\Lambda)=0$ for $i>0$) together with Lemma~\ref{lem:StablyAugmentation} then gives 
$ H^2(X_D^d)  \cong \Hom_\Lambda(H_2(X_D^d),\Lambda) \cong_s \Hom_\Lambda(I,\Lambda) \cong I,$ as claimed.

The claim therefore leads to the exact sequence:
\[\begin{tikzcd}
	0 &  \underbrace{H^1(\widetilde{D}\times S^1)}_{\cong \Lambda_0} & {H_2(\Sigma_d(D))} & {\underbrace{H^2(X_D^d)}_{\cong_s I}} & 0.
	\arrow[from=1-1, to=1-2]
	\arrow[from=1-2, to=1-3]
	\arrow[from=1-3, to=1-4]
	\arrow[from=1-4, to=1-5]
\end{tikzcd}\]
We now tensor this sequence over~$\Lambda_1$, yielding
\[\begin{tikzcd}
	{  \underbrace{H^1(\widetilde{D}\times S^1)}_{\cong \Lambda_0 \otimes_\Lambda \Lambda_1}  } & { H_2(\Sigma_d(D)) \otimes_{\Lambda} \Lambda_1} & {\underbrace{H^2(X_D^d) \otimes_{\Lambda} \Lambda_1}_{\cong_s I \otimes_{\Lambda} \Lambda_1}} & 0.
	\arrow["\iota", from=1-1, to=1-2]
	\arrow[from=1-2, to=1-3]
	\arrow[from=1-3, to=1-4]
\end{tikzcd}\]
We assert that the map labelled~$\iota$ is the zero map.
Note that~$\Lambda_0 \otimes_{\Lambda} \Lambda_1 \cong \Z_d$ is~$\Z$-torsion.
Since~$H_2(\Sigma_d(D))$ is projective, it follows that~$H_2(\Sigma_d(D)) \otimes_{\Lambda} \Lambda_1$ is also projective and therefore has no~$\Z$-torsion.
Combining these two facts implies that~$\iota$ is the zero map.

Since~$I$ is free as a~$\Lambda_1$-module, 
it follows that~$H^2(X_D^d)$ is stably free as a~$\Lambda_1$-module.
Since~$\iota$ is the zero map, it follows that~$H_2(\Sigma_d(D)) \otimes_{\Lambda} \Lambda_1$ is also stably free.  
\end{proof}

In order to determine $H_2(\Sigma_d(D))$,  it remains to perform an Euler characteristic calculation.
\begin{lemma}
\label{lem:RankCalculation}
If $M$ is a simply-connected $4$-manifold with boundary $S^3$ and $D \subset M$ is a properly embedded $\Z_d$-disc with $d \neq 0$, then $b_2(\Sigma_d(D))=d \cdot b_2(M).$
\end{lemma}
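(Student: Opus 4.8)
The plan is to run a straightforward Euler characteristic computation, exploiting the multiplicativity of $\chi$ under finite covers. I would start from the decomposition $\Sigma_d(D) = X_D^d \cup \overline{\nu}(\widetilde{D})$ already recorded in the discussion around~\eqref{eq:SESBranched}, where $\widetilde{D} \cong D^2$ is the single lift of the branch disc, so that $\overline{\nu}(\widetilde{D}) \cong D^2 \times D^2$ is glued to the unbranched cover $X_D^d$ along $\widetilde{D} \times S^1 \cong D^2 \times S^1$. Since $D$ is a $\Z_d$-disc, the composite $\pi_1(X_D) \cong \Z_d \to \Z_d$ is an isomorphism, so $X_D^d$ is the (connected, simply-connected) universal cover and $\chi(X_D^d) = d\,\chi(X_D)$.

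Inclusion-exclusion then gives
$\chi(\Sigma_d(D)) = \chi(X_D^d) + \chi(D^2 \times D^2) - \chi(D^2 \times S^1) = d\,\chi(X_D) + 1$,
using $\chi(D^2\times S^1)=0$ and $\chi(D^2\times D^2)=1$. To evaluate $\chi(X_D)$ I would apply the same bookkeeping to $M = X_D \cup (D \times D^2)$, where the two pieces meet along the tube $D \times S^1$; this yields $\chi(M) = \chi(X_D) + 1$, hence $\chi(X_D) = \chi(M) - 1$. Because $M$ is simply-connected with $\partial M = S^3$, one has $b_0(M)=1$, $b_1(M)=0$, and $b_3(M)=b_4(M)=0$ (the last two by Lefschetz duality $H_3(M)\cong H^1(M,\partial M)$ and $H_4(M)\cong H^0(M,\partial M)$, together with $H^1(M)=0$ and the connectedness of $\partial M$), so $\chi(M) = 1 + b_2(M)$ and therefore $\chi(X_D) = b_2(M)$. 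Combining these gives $\chi(\Sigma_d(D)) = d\,b_2(M) + 1$.

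It then remains to convert this into the stated Betti-number identity, i.e.\ to show $\chi(\Sigma_d(D)) = 1 + b_2(\Sigma_d(D))$, for which I would verify $b_0 = 1$, $b_1 = 0$, $b_3 = 0$, $b_4 = 0$. Connectedness gives $b_0 = 1$ and the nonempty boundary gives $b_4 = 0$. A van Kampen argument on the decomposition above shows $\Sigma_d(D)$ is simply-connected (its fundamental group is a quotient of $\pi_1(X_D^d) = 1$), so $b_1 = 0$; and $b_3 = 0$ follows from Lefschetz duality exactly as for $M$, once one notes $H^1(\Sigma_d(D)) = 0$ and that $\partial\Sigma_d(D) = \Sigma_d(K)$ is connected. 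The arithmetic is routine; the likely main obstacle is simply pinning down the topological inputs cleanly — that the lift $\widetilde{D}$ is a single disc, that $X_D^d$ is genuinely the universal cover so the cover is connected and $d$-fold, and the vanishing of $b_1$ and $b_3$ for $\Sigma_d(D)$. With these in hand, $1 + b_2(\Sigma_d(D)) = d\,b_2(M) + 1$ yields $b_2(\Sigma_d(D)) = d\cdot b_2(M)$.
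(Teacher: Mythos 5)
Your proposal is correct and follows essentially the same route as the paper: both establish that $\Sigma_d(D)$ is simply-connected, kill $b_1$, $b_3$, $b_4$ via duality and the connected boundary so that $\chi(\Sigma_d(D)) = 1 + b_2(\Sigma_d(D))$, and then compute $\chi(\Sigma_d(D)) = \chi(X_D^d) + 1 = d\,\chi(X_D) + 1 = d(\chi(M)-1)+1 = d\, b_2(M) + 1$ using multiplicativity of $\chi$ under the $d$-fold cover and the gluing decompositions. Your write-up just spells out the bookkeeping (the $D^2\times D^2$ and $D^2\times S^1$ Euler characteristics, the van Kampen argument) that the paper leaves implicit.
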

\begin{proof}
Since $\pi_1(M\setminus D) \cong \Z_d$,  the branched cover~$\Sigma_d(D)$ is simply-connected.
We deduce that~$H_3(\Sigma_d(D)) \cong  H^1(\Sigma_d(D),\partial \Sigma_d(D))=0$.
It follows that $b_2(\Sigma_d(D))+1=\chi(\Sigma_d(D))$.
Next, observe that 
$$ \chi(\Sigma_d(D))
=\chi(X_d(D))+1
=d \cdot \chi(X_D) +1
=d(\chi(X)-1)+1
=d\cdot b_2(X)+1.
$$
Combining these calculations yields $b_2(\Sigma_d(D))=d \cdot b_2(M)$.
\end{proof}

Finally,  we prove the main result of this section.

\begin{proposition}
\label{prop:H2Free}
Let $N$ be a simply-connected $4$-manifold with boundary $S^3$,  let $K \subset \partial N$ be a knot with $H_1(\Sigma_d(K))=0$,  and let~$x \in H_2(N,\partial N)$
be a nonzero class of divisibility $d$.
If~$D \subset N \#^k S^2 \times S^2$ is a disc representing~$x \oplus 0$ with boundary~$K$,  then
$$ H_2(\Sigma_d(D)) \cong \Lambda^{b_2(N)+2k}.$$
\end{proposition}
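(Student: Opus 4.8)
The plan is to assemble the structural results already established in this section with the rank computation of Lemma~\ref{lem:RankCalculation}. The two preceding propositions of this section show that $H_2(\Sigma_d(D))$ is both projective and stably free over $\Lambda$; combined with Theorem~\ref{thm:ModulesOverZZd}(1), which asserts that stably free $\Lambda$-modules are free, this immediately yields that $H_2(\Sigma_d(D))$ is a free $\Lambda$-module. So first I would record that $H_2(\Sigma_d(D)) \cong \Lambda^r$ for some $r \geq 0$ and reduce the proposition to identifying $r$.

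To pin down the rank, I would pass to the underlying abelian group. Since $D$ is simple, $\pi_1(N_k \setminus D) \cong \Z_d$ and hence, exactly as in the proof of Lemma~\ref{lem:RankCalculation}, $\Sigma_d(D)$ is simply-connected; in particular $H_2(\Sigma_d(D))$ is torsion-free, so it is a free abelian group of rank $b_2(\Sigma_d(D))$. On the other hand, as an abelian group $\Lambda = \Z[\Z_d] \cong \Z^d$, so the free $\Lambda$-module $\Lambda^r$ has abelian rank $dr$. Comparing the two descriptions gives $dr = b_2(\Sigma_d(D))$.

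Finally I would evaluate $b_2(\Sigma_d(D))$ using Lemma~\ref{lem:RankCalculation}, applied to the simply-connected $4$-manifold $M = N_k = N \#^k S^2 \times S^2$ with boundary $S^3$ and the $\Z_d$-disc $D$. Since $b_2(N_k) = b_2(N) + 2k$, the lemma gives $b_2(\Sigma_d(D)) = d \cdot b_2(N_k) = d\big(b_2(N) + 2k\big)$. Dividing by $d$ yields $r = b_2(N) + 2k$, whence $H_2(\Sigma_d(D)) \cong \Lambda^{b_2(N)+2k}$, as claimed.

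I do not expect a genuine obstacle here: every ingredient has already been proved, and the argument is a short bookkeeping of freeness followed by a rank count. The only point requiring minor care is the justification that $H_2(\Sigma_d(D))$ is torsion-free as an abelian group, so that being a free $\Lambda$-module of rank $r$ forces abelian rank exactly $dr$; this follows from the simple-connectivity of $\Sigma_d(D)$ noted above.
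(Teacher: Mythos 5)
Your proposal is correct and follows essentially the same route as the paper: stable freeness (from the two preceding propositions) plus Theorem~\ref{thm:ModulesOverZZd}(1) gives freeness $H_2(\Sigma_d(D))\cong\Lambda^r$, and Lemma~\ref{lem:RankCalculation} applied to $M=N_k$ pins down $r=b_2(N)+2k$ by comparing abelian ranks. Your extra care about torsion-freeness is harmless but unnecessary, since $\Lambda^r\cong\Z^{dr}$ as abelian groups already forces $b_2(\Sigma_d(D))=dr$ with no torsion.
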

\begin{proof}
Proposition~\ref{prop:SphereStablyFree} ensures that $H_2(\Sigma_d(D))$ is stably free.
Theorem~\ref{thm:ModulesOverZZd} therefore ensures that it is free, say of rank $r$.
It follows that as an abelian group $H_2(\Sigma_d(D))$ is free of rank $dr$.
Lemma~\ref{lem:RankCalculation} then implies that $r=b_2(N)+2k.$
\end{proof}

\section{Splitting of the form over $\Z$.}
\label{sec:SplittingFormOverZ}

Continuing with Notation~\ref{not:TheUsual}, the goal of this section is to establish the second condition from Proposition~\ref{prop:TopSplitting}, which states that the $\Lambda_0$ version of the pointed hermitian form on $H_2(\Sigma_d(D))$ splits off $k$ hyperbolic summands.
Our overall strategy follows Lee and Wilczy\'nski's approach in the closed case~\cite{LWCommentarii,LWKtheory}.

\medbreak 
We recall some notation. 
We write~$\pi_0\colon \Lambda \to \Lambda_0 =\Lambda/T_{\mathcal{N}}\Lambda\cong\Z$ for the projection map.
We set~$H_2(\Sigma_d(D))_0:=H_2(\Sigma_d(D))/T_{\mathcal{N}}H_2(\Sigma_d(D))$ with the corresponding projection map also denoted~$\pi_0 \colon H_2(\Sigma_d(D)) \to H_2(\Sigma_d(D))_0$. 
Recall that $\lambda_{\Sigma_d(D)}$ descends to a~$\Z$-valued form on~$H_2(\Sigma_d(D))_0$, which we were previously denoting $(\lambda_{\Sigma_d(D)})_0$, but in this section will denote
\begin{align*}
\lambda_0 \colon H_2(\Sigma_d(D))_0 \times H_2(\Sigma_d(D))_0 &\to \Lambda/T_{\mathcal{N}}\Lambda \cong \Z \\
(\pi_0(x),\pi_0(y)) &\mapsto \pi_0(\lambda(x,y)),
\end{align*}
for brevity. Recall that $\lambda_0$ is nonsingular; see Remark~\ref{rem:NonSingular}. 
Here and in the sequel we denote $z_0:=\pi_0(z)$ and $i_* \colon H_2(N) \to H_2(N,\partial N)$ for inclusion induced map.

The next proposition, whose proof closely follows~\cite[Proposition \(2.6\)]{LWGenus} is the main result of this section: it proves the required splitting of~$(H_2(\Sigma_d(D))_0,\lambda_0,z_0)$. 

\begin{proposition}
\label{prop:SplitDownstairs}
The branched covering projection~$p \colon \Sigma_d(D) \to N_k$ induces an isometry
$$p_0 \colon  (H_2(\Sigma_d(D))_0,\lambda_0,z_0) \to (H_2(N_k),Q_{N_k},i_*^{-1}(x) \oplus 0)$$
that satisfies~\(p_0 \circ \pi_0 = p_*\).

In particular, there exists an orthogonal splitting of pointed hermitian modules 
    \[ p_0 \colon (H_2(\Sigma_d(D))_0,\lambda_0,z_0) \xrightarrow{\cong} (H_2(N), Q_N, x) \oplus H(\Z)^k\] which only depends on the branched cover \(p\) and the connected sum decomposition of \(N_k\). 
\end{proposition}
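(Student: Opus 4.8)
The plan is to obtain $p_0$ by factoring the pushforward $p_*$ through the quotient $\pi_0$, then to verify the isometry and base-point conditions by hand, and finally to read off the hyperbolic splitting from the connected-sum decomposition of $N_k$.

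First I would establish that $p_*$ factors through $\pi_0$. By Proposition~\ref{prop:H2Free} the module $H_2(\Sigma_d(D))$ is free over $\Lambda$, and the free resolution~\eqref{eq:FreeResolution} identifies the augmentation ideal as $I=(t-1)\Lambda=T_{\mathcal N}\Lambda$; consequently the $\mathcal N$-torsion submodule of a free module coincides with the image of $t-1$, so $H_2(\Sigma_d(D))_0$ is exactly the module of coinvariants $H_2(\Sigma_d(D))/(t-1)$. Because $N_k$ carries the trivial $\Z_d$-action, $p_*(ta)=p_*(a)$ for all $a$, so $p_*$ vanishes on $(t-1)H_2(\Sigma_d(D))$ and descends to a homomorphism $p_0\colon H_2(\Sigma_d(D))_0\to H_2(N_k)$ with $p_0\circ\pi_0=p_*$, as required.

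The crux is the isometry statement. Unwinding the definitions from Example~\ref{ex:pointed}, for $a,b\in H_2(\Sigma_d(D))$ one has
$$\lambda_0(\pi_0 a,\pi_0 b)=\varepsilon\big(\lambda_{\Sigma_d(D)}(a,b)\big)=\sum_{g\in\Z_d}Q_{\Sigma_d(D)}(a,gb)=Q_{\Sigma_d(D)}(a,\mathcal N b),$$
so it suffices to prove the branched-cover identity $Q_{\Sigma_d(D)}(a,\mathcal N b)=Q_{N_k}(p_* a,p_* b)$. I would prove this using the transfer homomorphism $\tr\colon H_2(N_k)\to H_2(\Sigma_d(D))$ of the cyclic cover, which satisfies $\tr\circ p_*=\mathcal N$, together with the projection formula $Q_{N_k}(p_* a,c)=Q_{\Sigma_d(D)}(a,\tr(c))$; taking $c=p_* b$ gives the identity. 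Since the $\Z_d$-action is free away from the codimension-two branch set, the transfer and projection formula hold on the exteriors and extend over the branch locus, and I expect this verification---getting the transfer to behave correctly in the branched setting---to be the main obstacle, to be handled by restricting to the unbranched exteriors as in the closed case of~\cite[Proposition 2.6]{LWGenus}.

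Granting the isometry, $p_0$ is an isometry of $\Z$-forms whose source and target are free of the same rank: $H_2(\Sigma_d(D))_0$ has rank $b_2(N)+2k$ by Proposition~\ref{prop:H2Free}, while $b_2(N_k)=b_2(N)+2k$. Both $\lambda_0$ (Remark~\ref{rem:NonSingular}) and $Q_{N_k}$ (as $\partial N_k=S^3$) are unimodular, so a determinant and index computation forces the full-rank isometric embedding $p_0$ to be an isomorphism. For the base point, $p_0(z_0)=p_*(z)$, and since the branch set $\widetilde D$ maps homeomorphically onto $D$, naturality of $i_*$ together with the vanishing of $H_*(S^3)$ in the relevant degrees gives $i_*(p_*(z))=p_*[\widetilde D]=[D]=x$, whence $p_0(z_0)=i_*^{-1}(x)$. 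Finally, the connected-sum decomposition $N_k=N\#^k(S^2\times S^2)$ induces the orthogonal splitting $Q_{N_k}=Q_N\oplus H(\Z)^k$, with the class $i_*^{-1}(x)$ lying in the $H_2(N)$ summand; composing $p_0$ with this identification yields the claimed orthogonal splitting of pointed hermitian modules, which by construction depends only on $p$ and the chosen connected-sum decomposition.
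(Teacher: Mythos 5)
Your proposal is correct, and its core coincides with the paper's: both factor $p_*$ through $\pi_0$, reduce the isometry claim to the identity $\sum_{g\in\Z_d} Q_{\Sigma_d(D)}(a,gb)=Q_{N_k}(p_*a,p_*b)$, and prove that identity via a transfer satisfying $\tr\circ p_*=\mathcal{N}$ together with a projection formula; the paper isolates exactly this as Propositions~\ref{prop:McGyver} and~\ref{prop:LWIntersectionEquality} in the appendix, and the branched-cover subtlety you flag as the main obstacle is handled there by doubling $(N_k,D)$ and comparing with the unbranched covers of the exteriors, so your assessment of where the work lies is accurate. The genuine divergence is in proving that $p_0$ is bijective. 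The paper proves surjectivity of $p_*$ topologically (Lemma~\ref{lem:Surjectivep*}: excision identifies $H_2(\Sigma_d(D),X_D^d)\cong H_2(N_k,X_D)$, and a Mayer--Vietoris/Hurewicz diagram chase using $H_2(\Z_d)=0$ shows $H_2(X_D^d)\to H_2(X_D)$ is onto), then deduces injectivity from nonsingularity. You instead argue lattice-theoretically: injectivity from nondegeneracy of $\lambda_0$, and then, since both sides are unimodular lattices of the same rank $b_2(N)+2k$, the discriminant-index relation $\operatorname{disc}(\im p_0)=[H_2(N_k):\im p_0]^2\operatorname{disc}(Q_{N_k})$ forces the index to be one. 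This is valid and buys economy: no separate surjectivity lemma is needed. The cost is a dependence on the freeness theorem (Proposition~\ref{prop:H2Free}), both for the rank count and for your identification $T_{\mathcal N}H_2(\Sigma_d(D))=(t-1)H_2(\Sigma_d(D))$; this is the deepest input of Section~\ref{sec:H2Free} (projectivity plus Jacobinski's theorem). There is no circularity, since Proposition~\ref{prop:H2Free} is established independently and earlier, but the paper's route keeps Proposition~\ref{prop:SplitDownstairs} free of that hypothesis. Note also that the paper's torsion-vanishing step is more elementary than yours: from $\mathcal{N}z=0$ one gets $d\cdot p_*(z)=p_*(\mathcal N z)=0$, and $H_2(N_k)$ is torsion-free, requiring no freeness at all.
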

\begin{proof}
    The covering-induced map~$p_* \colon H_2(\Sigma_d(D)) \to H_2(N_k)$ vanishes on~$T_{\mathcal{N}} H_2(\Sigma_d(D))$. 
    Indeed if~$z \in T_{\mathcal{N}}$, then~$d \cdot p_*(z)=p_*(\mathcal{N}z)=0$, but since~$H_2(N_k)$ is torsion-free, we deduce that~$p_*(z)=0$, as asserted.
It follows that~$p_*$ induces a map 
$$ p_0 \colon H_2(\Sigma_d(D))_0 \to H_2(N_k).~$$
By definition, we have~$p_0 \circ \pi_0=p_*$.
Next, we argue that~$p_0$ induces a morphism 
$$ p_0 \colon (H_2(\Sigma_d(D))_0,\lambda_0,z_0) \to (H_2(N_k),Q_{N_k},i_*^{-1}(x) \oplus 0).$$
Clearly,  by definition of~$p_0$ we have~$p_0(z_0)=p_0(\pi_0(z))=p_*(z)=i_*^{-1}(x) \oplus 0.$
The crux is therefore to show that~$p_0$ preserves the intersection forms.
Using the surjectivity of $\pi_0$ and $p_0 \circ \pi_0=p_*$, this amounts to verifying that~$Q_{N_k}(p_*(y),p_*(y'))=\lambda_0(\pi_0(y),\pi_0(y'))$ for all~$y,y' \in H_2(\Sigma_d(D)).$

We use the augmentation-induced isomorphism~$\Lambda/T_{\mathcal{N}}\Lambda\cong\Z$ to identify~$\lambda_0(\pi_0(y),\pi_0(y')) \in \Lambda_0$ with~$\aug(\lambda_0(\pi_0(y),\pi_0(y'))) \in \Z$.
Proposition~\ref{prop:LWIntersectionEquality} now yields
$$\aug(\lambda_0(\pi_0(y),\pi_0(y')))
=\aug\left( \sum_{g \in G} Q_{\Sigma_d(D)}(y,gy')g^{-1} \right)
=\sum_{g \in G} Q_{\Sigma_d(D)}(y,gy')
=Q_{N_k}(p_*(y),p_*(y')).
$$
Thus~$p_0$ preserves the hermitian forms, as claimed.
    
Since $p_* \colon H_2(\Sigma_d(D)) \to H_2(N_k)$ is surjective (see Lemma~\ref{lem:Surjectivep*}, below), and~$p_0 \circ \pi_0=p_*$, it follows that $p_0$ is surjective.
Note that $Q_{N_k}$ and $\lambda_0$ are nonsingular; for the latter we use Remark~\ref{rem:NonSingular}.
Since~$p_0$ is a surjective morphism of nonsingular symmetric forms, it is necessarily injective.
Thus~$p_0$ is an isometry, as required.
\end{proof}

The following fact was needed in the proof above.
\begin{lemma}
\label{lem:Surjectivep*}
The following branched covering induced map is surjective:
$$ p_* \colon H_2(\Sigma_d(D)) \to H_2(N_k).$$
\end{lemma}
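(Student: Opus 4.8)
The plan is to deduce the surjectivity of $p_*$ on $H_2$ from the corresponding statement for the \emph{unbranched} covering $X_D^d \to X_D$, which I would in turn settle with the Cartan--Leray spectral sequence. Recall the decompositions $\Sigma_d(D)=X_D^d\cup_{\widetilde D\times S^1}\overline\nu(\widetilde D)$ and $N_k=X_D\cup_{D\times S^1}\overline\nu(D)$, where $\overline\nu(\widetilde D)\cong\widetilde D\times D^2$ and $\overline\nu(D)\cong D\times D^2$ are closed tubular neighbourhoods of the branch locus and of $D$. The projection $p$ restricts to the $\Z_d$-covering $X_D^d\to X_D$ on the exteriors, and to a map $\widetilde D\times S^1\to D\times S^1$ on the gluing regions that is a homeomorphism on the first factor and the degree-$d$ map $z\mapsto z^d$ on the meridional $S^1$. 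Naturality of Mayer--Vietoris then produces a commuting ladder
\[
\begin{tikzcd}
0 \ar[r] & H_2(X_D^d) \ar[r,"j"] \ar[d,"a"] & H_2(\Sigma_d(D)) \ar[r,"\partial"] \ar[d,"p_*"] & \Z \ar[r] \ar[d,"\cdot d"] & 0 \\
0 \ar[r] & H_2(X_D) \ar[r,"\iota"] & H_2(N_k) \ar[r,"\partial'"] & \Z, &
\end{tikzcd}
\]
in which the top row is the sequence \eqref{eq:SESBranched} (right-exact because $X_D^d$ is simply-connected, so $H_1(X_D^d)=0$) and $a:=p_*|_{H_2(X_D^d)}$.

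In the bottom row, $\pi_1(X_D)\cong\Z_d$ gives $H_1(X_D)\cong\Z_d$, so the Mayer--Vietoris map $\Z=H_1(D\times S^1)\to H_1(X_D)\cong\Z_d$ sends the meridian to a generator; hence $\im(\partial')=d\Z$. Granting that $a$ is onto, the surjectivity of the middle map then follows by a diagram chase: given $c\in H_2(N_k)$, write $\partial'(c)=dm$ and choose $y\in H_2(\Sigma_d(D))$ with $\partial(y)=m$ (possible since $\partial$ is onto). Commutativity of the right square gives $\partial'(p_*(y))=dm=\partial'(c)$, so $c-p_*(y)\in\ker(\partial')=\im(\iota)$. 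Writing $c-p_*(y)=\iota(b)$ and using surjectivity of $a$ to pick $w\in H_2(X_D^d)$ with $a(w)=b$, commutativity of the left square yields $\iota(b)=p_*(j(w))$, whence $c=p_*(y+j(w))$.

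It remains to prove the key step, that $a\colon H_2(X_D^d)\to H_2(X_D)$ is surjective. Here I would invoke the Cartan--Leray spectral sequence $E^2_{pq}=H_p(\Z_d;H_q(X_D^d))\Rightarrow H_{p+q}(X_D)$ of the regular covering $X_D^d\to X_D$ with deck group $\Z_d$. The edge homomorphism identifies $\im(a)$ with the bottom filtration piece $F_0H_2(X_D)$, so it suffices to check that the remaining contributions vanish, namely $E^2_{1,1}=H_1(\Z_d;H_1(X_D^d))$ and $E^2_{2,0}=H_2(\Z_d;\Z)$. The first vanishes because $X_D^d$ is simply-connected, and the second because $H_2(\Z_d;\Z)=0$ for a finite cyclic group. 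Hence $F_0H_2(X_D)=H_2(X_D)$ and $a$ is onto.

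The main obstacle is precisely this last step. The transfer only shows $\im(p_*)\supseteq d\,H_2(N_k)$, which does not rule out a nontrivial (torsion) cokernel; full surjectivity of $a$ genuinely uses both that the universal cover $X_D^d$ is $1$-connected and the vanishing $H_2(\Z_d;\Z)=0$. A secondary point requiring care is the bottom-row computation $\im(\partial')=d\Z$: it relies on $\pi_1(X_D)\cong\Z_d$, which is what makes the right-hand vertical map (multiplication by $d$) compatible with the chase and, in particular, forces $\partial'$ to be nonzero rather than identically zero.
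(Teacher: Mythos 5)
Your proof is correct and takes essentially the same route as the paper's: the paper reduces the statement to surjectivity of $H_2(X_D^d)\to H_2(X_D)$ via the ladder of long exact sequences of the pairs $(\Sigma_d(D),X_D^d)$ and $(N_k,X_D)$ (your Mayer--Vietoris ladder is the same diagram after excision), and then proves that surjectivity from exactly the two facts you isolate --- simple connectivity of $X_D^d$ and $H_2(\Z_d;\Z)=0$ --- phrased via the Hurewicz map and Hopf's theorem rather than the Cartan--Leray spectral sequence (of which Hopf's sequence is the low-degree corner). One small point in your favour: the paper asserts that the relative map $H_2(\Sigma_d(D),X_D^d)\to H_2(N_k,X_D)$ is an isomorphism, whereas your ladder correctly records this map as multiplication by $d$, compensated by your computation $\im(\partial')=d\Z$; either way the diagram chase goes through.
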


\begin{proof}
By excising \(\text{Int}(X_D^d)\cup X_K^d\), we obtain that $H_i(\Sigma_d(D),X_D^d) \cong H_i(\widetilde{D} \times D^2, \widetilde{D} \times S^1)$, and the latter is trivial except in degree~$2$, where it is infinite cyclic.
The same holds for the pair $(N_k, X_D)$ and it follows that the branched covering map induces an isomorphism 
$$H_2(\Sigma_d(D),X_D^d) \xrightarrow{\cong} H_2(N_k,X_D).$$
Since $X_D^d, N_k$ and $\Sigma_d(D)$ are simply-connected, we obtain the following commutative diagram of projection induced maps:
\[\begin{tikzcd}
	0 & {H_2(X_D^d)} & {H_2(\Sigma_d(D))} & {H_2(\Sigma_d(D),X_D^d)} & 0 \\
	0 & {H_2(X_D)} & {H_2(N_k)} & {H_2(N_k,X_D)} & {H_1(X_D)} & 0.
	\arrow[from=1-1, to=1-2]
	\arrow[from=1-2, to=1-3]
	\arrow["{\proj_*}", from=1-2, to=2-2]
	\arrow[from=1-3, to=1-4]
	\arrow["{p_*}", from=1-3, to=2-3]
	\arrow[from=1-4, to=1-5]
	\arrow["\cong", from=1-4, to=2-4]
	\arrow[from=2-1, to=2-2]
	\arrow[from=2-2, to=2-3]
	\arrow[from=2-3, to=2-4]
	\arrow[from=2-4, to=2-5]
	\arrow[from=2-5, to=2-6]
\end{tikzcd}\]
A diagram chase shows that if $\proj_* \colon H_2(X_D^d) \to H_2(X_D)$ is surjective, then so is $p_*$.
To establish the surjectivity of $\proj_*$,  we consider the following diagram which is commutative thanks to the naturality of the Hurewicz map:
\[\begin{tikzcd}
	{\pi_2(X_D^d)} && {\pi_2(X_D)} \\
	{H_2(X_D^d)} && {H_2(X_D).}
	\arrow["{\proj_*}", from=1-1, to=1-3]
	\arrow[from=1-1, to=2-1]
	\arrow[from=1-3, to=2-3]
	\arrow["{\proj_*}", from=2-1, to=2-3]
\end{tikzcd}\]
Since $H_2(\pi_1(X_D)) = H_2(\Z_d)=0$, the right map is surjective and since $X_D^d \to X_D$ is a covering space, the top map is an isomorphism.
It follows that the bottom $\proj_*$ is surjective, as required.
\end{proof}

\section{The evenness condition}
\label{sec:EvenessCondition}

Continuing with Notation~\ref{not:TheUsual}, the goal of this section is to establish the third and final condition from Proposition~\ref{prop:TopSplitting},  namely that~$\lambda_{\Sigma_d(D)}$ is weakly even on the kernel of
\[\begin{tikzcd}
	{\eta \colon H_2(\Sigma_d(D))} & {H_2(\Sigma_d(D))_0} & {H_2(N) \oplus \Z^{2k} } & H_2(N).
	\arrow["{\pi_0}", from=1-1, to=1-2]
	\arrow["{p_0}", from=1-2, to=1-3]
	\arrow["\proj_1",from=1-3, to=1-4]
\end{tikzcd}\]
Here, recall that a hermitian form $(H,\lambda)$ over $\Lambda$ is \emph{weakly even on a subset~$A \subset H$} if for every~$a \in A$,  it satisfies $\lambda(a,a)=r+\overline{r}$ for some~$r \in \Lambda$.
Furthermore, recall that 
$$\pi_0 \colon H_2(\Sigma_d(D)) \to H_2(\Sigma_d(D))_0=H_2(\Sigma_d(D))/T_{\mathcal{N}}H_2(\Sigma_d(D))$$ denotes the canonical projection.
Finally, note the map $p_0$, which was described in Proposition~\ref{prop:SplitDownstairs}, plays the role of the map $\beta_0$ in the splitting theorem (Theorem~\ref{thm:splitting}).

\medbreak

Proving this evenness condition requires several intermediate lemmas.

\begin{notation}
When $d$ is even, we write $T \in \Z_d$ for the unique nontrivial element of order $2$.
\end{notation}

In what follows, we say that an element $p \in \Lambda$ is \emph{even} if it can be written as~$p=r+\overline{r}$ for some~$r \in \Lambda$.
We begin with a criterion ensuring that $\lambda_{\Sigma_d(D)}(y,y)$ is even.

\begin{lemma}
\label{lem:EvenCriterion}
Given~\(y \in H_2(\Sigma_d(D))\), the group ring element~$\lambda_{\Sigma_d(D)}(y,y)$ is even if and only if~$Q_{\Sigma_d(D)}(y,y)$ is even and, in the case $d$ is even,~$Q_{\Sigma_d(D)}(y,Ty)$ is also even.
\end{lemma}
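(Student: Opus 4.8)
The plan is to reduce the statement to an elementary computation in the group ring $\Lambda=\Z[\Z_d]$ with its standard involution $\overline{g}=g^{-1}$. First I would record the coefficients of $\lambda_{\Sigma_d(D)}(y,y)$. By definition
\[
\lambda_{\Sigma_d(D)}(y,y)=\sum_{g\in\Z_d}Q_{\Sigma_d(D)}(y,gy)\,g^{-1},
\]
so reindexing by $h=g^{-1}$ the coefficient of $h$ is $Q_{\Sigma_d(D)}(y,h^{-1}y)$. Writing $Q:=Q_{\Sigma_d(D)}$ and using that $Q$ is symmetric while each deck transformation $h$ acts by an isometry, I get $Q(y,h^{-1}y)=Q(hy,y)=Q(y,hy)$. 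Hence the coefficient of $h$ in $\lambda_{\Sigma_d(D)}(y,y)$ is exactly $a_h:=Q(y,hy)$, and moreover $a_h=a_{h^{-1}}$, so $\lambda_{\Sigma_d(D)}(y,y)$ is self-conjugate (as it must be, being hermitian).

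Next I would determine precisely when a self-conjugate element $p=\sum_h a_h\,h$ with $a_h=a_{h^{-1}}$ is even, that is, of the form $r+\overline{r}$ with $r=\sum_h r_h\,h$. Since $r+\overline{r}=\sum_h (r_h+r_{h^{-1}})\,h$, the condition $p=r+\overline{r}$ is the system of equations $a_h=r_h+r_{h^{-1}}$. For a group element with $h\neq h^{-1}$, the two equations (for $h$ and for $h^{-1}$) coincide, reading $r_h+r_{h^{-1}}=a_h$, which is always solvable over $\Z$ (e.g.\ $r_h=a_h$, $r_{h^{-1}}=0$); thus such elements never obstruct evenness. For a self-inverse element $h=h^{-1}$ the equation reads $a_h=2r_h$, which is solvable if and only if $a_h$ is even. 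Therefore $p$ is even if and only if its coefficient at every self-inverse group element is even.

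Finally I would identify the self-inverse elements of $\Z_d$: the identity always, and $T=t^{d/2}$ exactly when $d$ is even. The corresponding coefficients are $a_1=Q(y,y)$ and $a_T=Q(y,Ty)$, which yields precisely the claimed criterion. The argument is entirely elementary; the only points requiring care are the bookkeeping of coefficients under the involution and the observation that non-self-inverse elements impose no parity constraint, so I do not anticipate a genuine obstacle beyond verifying the self-conjugacy identity $Q(y,h^{-1}y)=Q(y,hy)$ that comes from symmetry of $Q$ together with the isometry action of the deck group.
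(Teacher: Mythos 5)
Your proposal is correct and takes essentially the same approach as the paper's proof: expand $\lambda_{\Sigma_d(D)}(y,y)$ coefficient by coefficient, use the symmetry-plus-isometry identity $Q_{\Sigma_d(D)}(y,h^{-1}y)=Q_{\Sigma_d(D)}(hy,y)=Q_{\Sigma_d(D)}(y,hy)$, and reduce evenness to the parity of the coefficients at the self-inverse elements of $\Z_d$ (the identity, and $T$ when $d$ is even). Your write-up is in fact more detailed than the paper's, which compresses the final step into the single observation that evenness is equivalent to $Q_{\Sigma_d(D)}(y,gy)$ being even for every $g$ with $g^2=1$; your explicit solvability analysis of the system $a_h=r_h+r_{h^{-1}}$ makes precise exactly why non-self-inverse elements impose no constraint.
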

\begin{proof}
By definition of $\lambda_{\Sigma_d(D)}$ we have 
$$\lambda_{\Sigma_d(D)}(y,y)=\sum_{g \in \Z_d}Q_{\Sigma_d(D)}(y,g^{-1}y)g.$$
Since $Q_{\Sigma_d(D)}(y,h^{-1}y)=Q_{\Sigma_d(D)}(hy,y)=Q_{\Sigma_d(D)}(y,hy)$ for every $h\in \Z_d$, we deduce that~$\lambda_{\Sigma_d(D)}(y,y)$ is even if and only if~$Q_{\Sigma_d(D)}(y,gy)$ is even for every $g \in \Z_d$ of order~$2.$
\end{proof}

For the second lemma,  recall that~$\lambda_0$ denotes the form induced by~$\lambda_{\Sigma_d(D)}$ on~$H_2(\Sigma_d(D))_0$.

\begin{lemma}
\label{lem:SumIsEven}
For $x \in H_2(\Sigma_d(D))$,  the following congruences hold:
$$
\lambda_0(\pi_0(x),\pi_0(x))\equiv
\begin{cases}
Q_{\Sigma_d(D)}(x,x) & \quad \text{ if } d \text { is odd} \\
Q_{\Sigma_d(D)}(x,x)+Q_{\Sigma_d(D)}(x,Tx) & \quad \text{ if } d \text { is even}
\end{cases}
\quad \mod 2.
$$
\end{lemma}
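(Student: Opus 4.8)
The plan is to compute $\lambda_0(\pi_0(x),\pi_0(x))$ directly from the definitions and then reduce modulo $2$ by pairing each group element with its inverse. Recall from Section~\ref{sec:SplittingFormOverZ} that $\lambda_0(\pi_0(x),\pi_0(x)) = \pi_0(\lambda_{\Sigma_d(D)}(x,x))$, where $\pi_0 \colon \Lambda \to \Lambda_0 = \Lambda/T_{\mathcal{N}}\Lambda$ is, under the identification $\Lambda_0 \cong \Z$, simply the augmentation $\varepsilon$. Hence $\lambda_0(\pi_0(x),\pi_0(x)) = \varepsilon(\lambda_{\Sigma_d(D)}(x,x))$. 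Feeding in the definition $\lambda_{\Sigma_d(D)}(x,x) = \sum_{g \in \Z_d} Q_{\Sigma_d(D)}(x,gx)\,g^{-1}$ from Example~\ref{ex:pointed} and using $\varepsilon(g^{-1})=1$ for every $g$, I obtain
$$\lambda_0(\pi_0(x),\pi_0(x)) = \sum_{g \in \Z_d} Q_{\Sigma_d(D)}(x,gx).$$

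Next I would exploit the symmetry of this sum. Since the $\Z_d$-action arises from orientation-preserving deck transformations of the branched cover, it preserves the symmetric intersection form, so $Q_{\Sigma_d(D)}(x,gx) = Q_{\Sigma_d(D)}(g^{-1}x,x) = Q_{\Sigma_d(D)}(x,g^{-1}x)$ for every $g \in \Z_d$. Thus the contributions of $g$ and $g^{-1}$ agree. Grouping the summands into pairs $\{g,g^{-1}\}$, each pair with $g \neq g^{-1}$ contributes $2\,Q_{\Sigma_d(D)}(x,gx) \equiv 0 \bmod 2$, so modulo $2$ only the terms indexed by the elements with $g = g^{-1}$, i.e.\ $g^2 = 1$, survive.

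Finally I would enumerate the elements of order dividing $2$ in $\Z_d$. The identity always survives and contributes $Q_{\Sigma_d(D)}(x,x)$. When $d$ is odd the identity is the only such element, giving the first case; when $d$ is even there is precisely one additional element, the order-two element $T$, contributing $Q_{\Sigma_d(D)}(x,Tx)$, giving the second case. This yields the asserted congruences.

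I do not expect a genuine obstacle here: the argument is a short direct computation. The only points requiring mild care are the bookkeeping of the identification $\Lambda_0 \cong \Z$ via $\varepsilon$ (so that $\lambda_0$ really is computed by augmenting $\lambda_{\Sigma_d(D)}$) and the observation that the deck action is by isometries of $Q_{\Sigma_d(D)}$, both of which are routine.
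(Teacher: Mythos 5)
Your proof is correct. The second half of your argument --- reducing $\sum_{g \in \Z_d} Q_{\Sigma_d(D)}(x,gx)$ modulo $2$ by pairing $g$ with $g^{-1}$, so that only the elements of order dividing two survive --- is exactly the paper's closing step. Where you genuinely differ is in how you reach the key identity $\lambda_0(\pi_0(x),\pi_0(x))=\sum_{g\in\Z_d}Q_{\Sigma_d(D)}(x,gx)$. You get it purely algebraically: $\lambda_0(\pi_0(x),\pi_0(x))=\pi_0(\lambda_{\Sigma_d(D)}(x,x))$ by definition, $T_{\mathcal{N}}\Lambda$ is the augmentation ideal $I$, so the composite of $\pi_0$ with the identification $\Lambda_0\cong\Z$ is the augmentation $\varepsilon$, and applying $\varepsilon$ to $\sum_g Q_{\Sigma_d(D)}(x,gx)g^{-1}$ erases the group elements. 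The paper instead routes through geometry: it invokes Proposition~\ref{prop:SplitDownstairs} (that $p_0$ is an isometry with $p_0\circ\pi_0=p_*$) together with the transfer identity of Proposition~\ref{prop:LWIntersectionEquality} to write $\lambda_0(\pi_0(y),\pi_0(y'))=Q_{N_k}(p_*(y),p_*(y'))=\sum_g Q_{\Sigma_d(D)}(y,gy')$. The two derivations are equivalent in substance --- indeed the paper's proof of Proposition~\ref{prop:SplitDownstairs} carries out your augmentation computation along the way --- but yours is more self-contained, requiring no geometric input beyond the fact that deck transformations preserve $Q_{\Sigma_d(D)}$, and in particular it makes visible that the lemma is a formal consequence of the definitions. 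What the paper's formulation buys is that its intermediate equation (recorded as~\eqref{eq:UsefulIntermediate}) simultaneously relates the sum to $Q_{N_k}$, a fact reused in the proof of Proposition~\ref{prop:EvenessCondition}; your identity would serve that later purpose just as well.
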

\begin{proof}
Using that~$p_0$ is an isometry (Proposition~\ref{prop:SplitDownstairs}),  the equality~$p_0 \circ \pi_0=p_*$ (also from Proposition~\ref{prop:SplitDownstairs}) and Proposition~\ref{prop:LWIntersectionEquality}, we obtain
\begin{equation}
\label{eq:UsefulIntermediate}
\lambda_0(\pi_0(y),\pi_0(y'))
=Q_{N_k}(p_0(\pi_0(y)),p_0(\pi_0(y')))
=Q_{N_k}(p_*(y),p_*(y'))
= \sum_{g \in G} Q_{\Sigma_d(D)}(y,gy').
\end{equation}
The lemma now follows by taking $y=y'$,  reducing modulo $2$,  and recalling that~$Q_{\Sigma_d(D)}(y,h^{-1}y)=Q_{\Sigma_d(D)}(y,hy)$ for every~$h\in \Z_d$.
\end{proof}

Our final lemma is specific to the case where $d$ is even.

\begin{lemma}
\label{lem:Edmonds}
Assume that $d$ is even.
If there exists an~$x \in H_2(\Sigma_d(D);\Z_2)$ with $Q_{\Sigma_d(D)}(x,Tx) \neq 0$, then, for every $y \in H_2(\Sigma_d(D))$
$$ Q_{\Sigma_d(D)}(y,Ty) = Q_{\Sigma_d(D)}(y,z) \mod 2.$$
\end{lemma}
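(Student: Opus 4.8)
The plan is to prove the stronger, \emph{unconditional} congruence
$$Q_{\Sigma_d(D)}(y,Ty)\equiv Q_{\Sigma_d(D)}(y,z)\pmod 2\qquad\text{for all }y\in H_2(\Sigma_d(D)),$$
from which the statement follows at once; the hypothesis that some $x$ has $Q_{\Sigma_d(D)}(x,Tx)\neq 0$ then merely guarantees that we are in the relevant case where this common value is not identically zero. The key is to recognise $T=t^{d/2}$ as the deck transformation of the intermediate double branched cover $\Sigma_d(D)\to\Sigma_{d/2}(D)$, whose fixed-point set is exactly the branch locus $\widetilde D$ of class $z$, and then to interpret both sides of the congruence as mod $2$ geometric intersection numbers. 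This is an instance of a congruence of Edmonds for the fixed-point set of a locally linear involution on a $4$-manifold.

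First I would set up the geometry. Since the $\Z_d$-action on $\Sigma_d(D)$ is free away from $\widetilde D$ and fixes $\widetilde D$ pointwise, the involution $T$ satisfies $\operatorname{Fix}(T)=\widetilde D$, and in a tubular neighbourhood it is modelled by rotation through $\pi$ in the normal $2$-plane; in particular $T$ is orientation-preserving with a $2$-dimensional fixed set representing $z$. Because $H_1(\Sigma_d(K))=0$, the boundary is a $\Z_2$-homology sphere, so $H_2(\Sigma_d(D);\Z_2)\cong H_2(\Sigma_d(D),\partial\Sigma_d(D);\Z_2)$ and every class $y$ is represented by a closed surface $S$ in the interior; the pairing $Q_{\Sigma_d(D)}(y,z)$ is then the mod $2$ count of $S\cap\widetilde D$.

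The heart of the argument is a general-position count. I would perturb $S$ within its homology class so that $S\pitchfork\widetilde D$, so that $S\pitchfork TS$ away from $\widetilde D$, and so that at each of the finitely many points $p\in S\cap\widetilde D$ the plane $T_pS$ is transverse both to $T_p\widetilde D$ and to the normal plane $\nu_p$. A short computation in the local model $T=\operatorname{diag}(1,1,-1,-1)$ then shows that near such a $p$ the surfaces $S$ and $TS$ meet transversally in the single point $p$. Since $T$ preserves $S\cap TS$, fixes exactly the points of $S\cap\widetilde D$, and permutes the remaining intersection points in free orbits of size two, reducing modulo $2$ gives $y\cdot Ty=\#(S\cap TS)\equiv\#(S\cap\widetilde D)=y\cdot z$. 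Translating back into $Q_{\Sigma_d(D)}$ and reducing mod $2$ yields the displayed congruence, and hence the lemma.

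The main obstacle is the local analysis at the fixed intersection points: a priori $S$ and $TS$ need not meet transversally along $\widetilde D$ (indeed, if $S$ met $\widetilde D$ orthogonally then $TS$ would coincide with $S$ there), so one must check both that transversality of $T_pS$ to $\nu_p$ can be arranged by a generic perturbation and that it forces a single local intersection point of odd multiplicity. This is precisely the technical content of Edmonds' congruence, so in the write-up I would either carry out the local-model computation explicitly or cite the theorem of Edmonds after verifying its hypotheses (an orientation-preserving locally linear involution with $2$-dimensional fixed set), adapting the closed statement to our setting with $\Z_2$-homology-sphere boundary.
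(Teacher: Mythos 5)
Your proposal has a genuine gap, and it is precisely the one this paper goes out of its way to avoid. You propose to prove the \emph{unconditional} congruence $Q_{\Sigma_d(D)}(y,Ty)\equiv Q_{\Sigma_d(D)}(y,z) \bmod 2$ by a general-position count: represent $y$ by a surface $S$, perturb so that $S\pitchfork \widetilde{D}$, so that $S\pitchfork TS$ away from $\widetilde{D}$, and so that a tangent-plane condition holds at the fixed intersection points, then count $T$-orbits. The unjustified step is the perturbation itself. Since $TS$ moves whenever $S$ does, ``$S\pitchfork TS$'' is an \emph{equivariant} self-transversality condition, not an instance of ordinary general position; moreover the whole argument lives in the smooth category, whereas here $\Sigma_d(D)$ is only a topological $4$-manifold (the branched cover of a topological manifold along a locally flat disc) and $T$ is only a locally linear involution, so smooth perturbations and tangent-plane computations are not available. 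This is exactly the situation flagged in the remark following Proposition~\ref{prop:EvenessCondition}: Lee--Wilczy\'nski assert the unconditional statement in the closed case, the authors state they were \emph{unable to confirm it}, and they attribute the difficulty to issues of equivariant transversality in Edmonds' sketch together with the need to work in the topological category with a locally linear involution. In other words, the stronger statement you claim ``follows at once'' is an open point that the lemma's hypothesis is specifically designed to circumvent.

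Relatedly, you misread the role of the hypothesis. In the paper's proof, the existence of $x$ with $Q_{\Sigma_d(D)}(x,Tx)\neq 0$ is not a bystander ensuring nontriviality: the proof is cohomological, via the relative Bredon--Edmonds result (Theorem~\ref{thm:Bredonmainrelative} and Proposition~\ref{prop:BredonRel}). One considers the two $\Z_2$-linear functionals $\varphi(c)=\langle c\cup T^*c,[\Sigma_d(D)]\rangle$ and $\psi(c)=\langle k^*(c),[D^2]\rangle$; Bredon's theorem yields only the implication $\varphi(c)\neq 0\Rightarrow\psi(c)\neq 0$, and one needs $\varphi\not\equiv 0$ --- i.e.\ the lemma's hypothesis --- to upgrade this inclusion of kernels to the equality $\varphi=\psi$. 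If $\varphi\equiv 0$, the argument gives nothing, and no proof of the unconditional statement is known to the authors. So your plan would need either to carry out Edmonds' transversality sketch rigorously in the locally linear topological setting (a substantial project, not a ``short computation in the local model''), or to retreat to the cohomological route, in which case the hypothesis is essential.
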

\begin{proof}
By hypothesis,  there is a class~$\alpha \in H^2(\Sigma_d(D),\partial \Sigma_d(D);\Z_2)$ with~$\alpha \cup T^*\alpha \neq 0$.
Writing~$k \colon (D^2,\partial D^2) \to (\Sigma_d(D),\partial \Sigma_d(D))$ for the embedding with image~$\widetilde{D}$, a relative version of~\cite[Proposition 5.1]{EdmondsAspects} (see Proposition~\ref{prop:BredonRel}) ensures that for every $\beta \in H^2(\Sigma_d(D),\partial \Sigma_d(D);\Z_2)$, we have
\begin{equation}
\label{eq:RelativeEdmonds}
\langle \beta \cup T^*\beta,[\Sigma_d(D)] \rangle =\langle k^*(\beta),[D^2]\rangle \mod 2.
\end{equation}
Since~$i_*(z)=[\widetilde{D}]=k_*([D^2]) \in H_2(\Sigma_d(D),\partial \Sigma_d(D))$,  using the definition of the intersection form, it follows that for all $y \in H_2(\Sigma_d(D))$ 
\begin{align*}
Q_{\Sigma_d(D)}(z,y)
&=\langle \PD_{\Sigma_d(D)}^{-1}(i_*(y)),z\rangle
=\langle i^*(\PD_{\Sigma_d(D)}^{-1}(y)),z\rangle
=\langle \PD_{\Sigma_d(D)}^{-1}(y),k_*([D^2])\rangle\\
&=\langle k^*(\PD_{\Sigma_d(D)}^{-1}(y)),[D^2]\rangle 
=\langle \PD_{\Sigma_d(D)}^{-1}(y)  \cup T^*\PD_{\Sigma_d(D)}^{-1}(y) ,[\Sigma_d(D)]\rangle  \\
&=Q_{\Sigma_d(D)}(y,Ty) \mod 2.
\end{align*}
Here we used~\eqref{eq:RelativeEdmonds} in the fifth equality.
This concludes the proof of the lemma.
\end{proof}

We now prove the main result of this section, namely the evenness condition of Proposition~\ref{prop:TopSplitting}.

\begin{proposition}
\label{prop:EvenessCondition}
If~$D \subset N_{k}$ is a~$\Z_d$-surface representing~$x \oplus 0 \in H_2(N_k, \partial N)$, then~$\lambda_{\Sigma_d(D)}$ is weakly even on
$$\operatorname{Ker}:=\ker \Big( H_2(\Sigma_d(D)) \xrightarrow{\pi_0} H_2(\Sigma_d(D))_0 \xrightarrow{p_0,\cong}  H_2(N) \oplus H_2(S^2 \times S^2)^k \xrightarrow{\operatorname{proj}} H_2(N) \Big).$$
\end{proposition}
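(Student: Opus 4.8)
The plan is to deduce the statement entirely from the three criteria already in place in this section, namely Lemma~\ref{lem:EvenCriterion}, Lemma~\ref{lem:SumIsEven}, and Lemma~\ref{lem:Edmonds}, after one preliminary observation that holds for every $d$. Fix $y\in\operatorname{Ker}$. The key first step is that the diagonal value $\lambda_0(\pi_0(y),\pi_0(y))$ is \emph{even}. Indeed, by Proposition~\ref{prop:SplitDownstairs} the map $p_0$ is an isometry onto $(H_2(N),Q_N)\oplus H(\Z)^k$, and membership $y\in\operatorname{Ker}$ says exactly that $\proj_1(p_0(\pi_0(y)))=0$, i.e.\ $p_0(\pi_0(y))$ lies in the hyperbolic summand $H(\Z)^k\cong\Z^{2k}$. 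The diagonal values of the form $\bsm 0 & 1 \\ 1 & 0\esm$ are all of the shape $2ab$, so $\lambda_0(\pi_0(y),\pi_0(y))=Q_{N_k}(p_0\pi_0(y),p_0\pi_0(y))\equiv 0\bmod 2$.

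For $d$ odd this already finishes everything: Lemma~\ref{lem:SumIsEven} gives $Q_{\Sigma_d(D)}(y,y)\equiv\lambda_0(\pi_0(y),\pi_0(y))\equiv 0\bmod 2$, which by Lemma~\ref{lem:EvenCriterion} is precisely the condition for $\lambda_{\Sigma_d(D)}(y,y)$ to be even. The real content is therefore the case $d$ even, and this is where I expect the main obstacle to lie: Lemma~\ref{lem:EvenCriterion} now demands that \emph{both} $Q_{\Sigma_d(D)}(y,y)$ and $Q_{\Sigma_d(D)}(y,Ty)$ be even, whereas Lemma~\ref{lem:SumIsEven} only delivers the weaker congruence $Q_{\Sigma_d(D)}(y,y)+Q_{\Sigma_d(D)}(y,Ty)\equiv\lambda_0(\pi_0(y),\pi_0(y))\equiv 0\bmod 2$, i.e.\ that the two are equal mod $2$. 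Upgrading this to each being individually even is the heart of the matter.

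To handle it I would dichotomize according to whether the $\Z_2$-valued pairing $x\mapsto Q_{\Sigma_d(D)}(x,Tx)$ vanishes identically on $H_2(\Sigma_d(D);\Z_2)$. If it vanishes identically, then $Q_{\Sigma_d(D)}(y,Ty)\equiv 0\bmod 2$ directly, and the congruence above forces $Q_{\Sigma_d(D)}(y,y)\equiv 0\bmod 2$ as well, so both are even. If it does not vanish, then Lemma~\ref{lem:Edmonds} applies and rewrites the offending term via the branch class as $Q_{\Sigma_d(D)}(y,Ty)\equiv Q_{\Sigma_d(D)}(y,z)\bmod 2$. The closing step is then to show $Q_{\Sigma_d(D)}(y,z)=0$ for $y\in\operatorname{Ker}$, which I would extract once more from the isometry $p_0$ together with the fact that $z$ is $\Z_d$-fixed (Lemma~\ref{lem:FixedPointCondition}). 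On one hand, since $gz=z$ for all $g$, equation~\eqref{eq:UsefulIntermediate} collapses to
$$\lambda_0(\pi_0(y),\pi_0(z))=\sum_{g\in\Z_d}Q_{\Sigma_d(D)}(y,gz)=d\,Q_{\Sigma_d(D)}(y,z).$$
On the other hand, using $p_0\circ\pi_0=p_*$ and $p_0(z_0)=i_*^{-1}(x)\oplus 0$,
$$\lambda_0(\pi_0(y),\pi_0(z))=Q_{N_k}\bigl(p_*(y),\,i_*^{-1}(x)\oplus 0\bigr)=Q_N\bigl(\proj_1 p_0\pi_0(y),\,i_*^{-1}(x)\bigr)=0,$$
because the $H_2(N)$-component of $p_*(y)$ vanishes for $y\in\operatorname{Ker}$ while the branch class pairs trivially with the hyperbolic summand. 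Since $\Z$ is torsion-free and $d\neq 0$, this yields $Q_{\Sigma_d(D)}(y,z)=0$, hence $Q_{\Sigma_d(D)}(y,Ty)\equiv 0$, and with the congruence also $Q_{\Sigma_d(D)}(y,y)\equiv 0$; Lemma~\ref{lem:EvenCriterion} then concludes the even case.

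Beyond the parity dichotomy, I anticipate the only delicate points to be bookkeeping: matching the identification of $p_0(\pi_0(y))$ with its hyperbolic component against the definition of $\operatorname{Ker}$ through $\proj_1$, and invoking Lemma~\ref{lem:Edmonds} in its relative form with the correct hypothesis. All remaining manipulations are immediate consequences of the lemmas established earlier in this section.
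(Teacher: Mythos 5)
Your proposal is correct, and its skeleton coincides with the paper's: reduce via Lemma~\ref{lem:EvenCriterion}, observe that $p_0(\pi_0(y))$ lands in the hyperbolic summand so that $\lambda_0(\pi_0(y),\pi_0(y))$ is even, dispatch the odd case with Lemma~\ref{lem:SumIsEven}, and in the even case use Lemma~\ref{lem:Edmonds} to trade $Q_{\Sigma_d(D)}(y,Ty)$ for $Q_{\Sigma_d(D)}(y,z)$ and then kill that pairing. The one genuine divergence is how you kill it. The paper invokes the freeness of $H_2(\Sigma_d(D))$ (Proposition~\ref{prop:H2Free}) to write $z=\mathcal{N}z_0$, writes $x=dx_0$ with $x_0$ primitive, and identifies $p_0(\pi_0(z_0))$ with the $H_2(N)$-component $x_0$, so that $Q_{\Sigma_d(D)}(y,z)=\sum_g Q_{\Sigma_d(D)}(y,gz_0)=\lambda_0(\pi_0(y),\pi_0(z_0))=0$ by orthogonality of the splitting. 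You instead apply equation~\eqref{eq:UsefulIntermediate} directly to the pair $(y,z)$ and use the $\Z_d$-invariance of $z$ (Lemma~\ref{lem:FixedPointCondition}) to get $\lambda_0(\pi_0(y),\pi_0(z))=d\,Q_{\Sigma_d(D)}(y,z)$, while the other side vanishes by the same orthogonality; dividing by $d\neq 0$ in the torsion-free group $\Z$ gives $Q_{\Sigma_d(D)}(y,z)=0$. This is a modest but real economy: your argument needs neither Proposition~\ref{prop:H2Free} nor the auxiliary norm-preimage $z_0$, only the pointed isometry of Proposition~\ref{prop:SplitDownstairs} and the fixed-point lemma. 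Your explicit dichotomy on whether $x\mapsto Q_{\Sigma_d(D)}(x,Tx)$ vanishes identically is logically the same maneuver as the paper's proof by contradiction (there the hypothetical odd class itself witnesses Edmonds' hypothesis), so no difference of substance there. Finally, your preliminary step is stated more carefully than in the paper: hyperbolic elements pair \emph{evenly} with themselves ($2ab$), not trivially, which is all that is needed for the mod $2$ arguments on both sides.
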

\begin{proof}
By Lemma~\ref{lem:EvenCriterion}, it suffices to prove that $Q_{\Sigma_d(D)}(y,y)$ (as well as $Q_{\Sigma_d(D)}(Ty,y)$ if $d$ is even) is even for every $y \in \operatorname{Ker}$.
Now given $y \in \operatorname{Ker}$, note that~$p_0(\pi_0(y))\in H_2(S^2\times S^2)$. 
But elements of~$H_2(\#^k S^2\times S^2)\subset  H_2(N_k)$ pair trivially with themselves and also trivially with all elements~$w\in H_2(N)$, under $Q_{N_k}$. 
In particular since $p_0$ is an isometry (Proposition~\ref{prop:SplitDownstairs}),
we have~$\lambda_0(\pi_0(y),\pi_0(y))=Q_{N_k}(p_0(\pi_0(y)),p_0(\pi_0(y')))=0$.
For $d$ odd,  we apply Lemma~\ref{lem:SumIsEven} to compute that $Q_{\Sigma_d(D)}(y,y) \equiv \lambda_0(\pi_0(y),\pi_0(y))=0$ is even for every $y \in \operatorname{Ker}$, establishing the proposition in this case.

Now assume that~$d$ is even and write $x=dx_0$ with $x_0 \in H_2(N,\partial N)$ primitive.
Since~$H_2(\Sigma_d(D))$ is $\Lambda$-free (recall Proposition~\ref{prop:H2Free}),  there is a $z_0 \in H_2(\Sigma_d(D))$ with $z=\mathcal{N}z_0.$
By definition of~$z$ we have~$p(z)=i_*^{-1}(x)$,  and thus~$dx_0=i_*^{-1}(x)=p(z)=d  (p_0 \circ \pi_0 (z_0))$.
Dividing by~$d$ we deduce that~$x_0=p_0(\pi_0(z_0))$.
Since $p_0$ is an isometry, the previous paragraph therefore ensures that $\lambda_0(\pi_0(y),\pi_0(z_0))
=Q_{N_k}(p_0(\pi_0(y)),p_0(\pi_0(z_0))=0.$
It follows that~$Q_{\Sigma_d(D)}(y,Ty)$ is even for every~$y \in \operatorname{Ker}$; to see this, consider that if there existed~$y \in \operatorname{Ker}$ violating this parity requirement, combining Lemma~\ref{lem:Edmonds} with~\eqref{eq:UsefulIntermediate} would lead to the following contradiction:
\begin{align*}
1 
&= Q_{\Sigma_d(D)}(y,Ty)
=Q_{\Sigma_d(D)}(y,z)
=\sum_{g \in \Z_d} Q_{\Sigma_d(D)}(y,gz_0) 
=\lambda_0(\pi_0(y),\pi_0(z_0)) 
 = 0 \quad  \mod 2.
 \end{align*}
 Thus $\lambda_0(\pi_0(y),\pi_0(y))$ and~$Q_{\Sigma_d(D)}(Ty,y)$ are both even.
Applying Lemma~\ref{lem:SumIsEven}, we compute that~$Q_{\Sigma_d(D)}(y,y)$ is also even,  thus concluding the proof in the even case as well.
\end{proof}

\begin{remark}
In the case of spheres in closed $4$-manifolds,  Lee-Wilczy\'nski assert that Lemma~\ref{lem:Edmonds} holds without having to assume that there exists an~$x \in H_2(\Sigma_d(D);\Z_2)$ with $Q_{\Sigma_d(D)}(x,Tx) \neq 0$; see~\cite[Equation (4.4)]{LWKtheory}.
We have not been able to confirm this assertion. 
Working in the smooth category, one might attempt to improve the sketch above~\cite[Proposition 5.1]{EdmondsAspects} to a proof of their assertion. 
For this,  one might work in the base space of the branched cover in order to overcome issues of equivariant transversality present in Edmonds' sketch. 
As we have used an alternative argument to prove Proposition~\ref{prop:EvenessCondition}, we have not attempted to complete these details (which would moreover need to be performed in the topological category, using a locally linear involution).
Finally, we note that when $Q_{\Sigma_d(D)}(z,z) \neq 0$, Lemma~\ref{lem:Edmonds} does automatically hold: in this case,~$Q_{\Sigma_d(D)}(z,Tz) \neq 0$ and so the lemma follows from~\cite[Proposition 5.1]{EdmondsAspects}.
\end{remark}

\section{Proof of the main theorem}
\label{sec:ProofMain}

We recall the main theorem for the reader's convenience.
\begin{customthm}{\ref{thm:Main}}
\label{thm:MainProof}
Let \(N\) be a compact, oriented, simply-connected $4$-manifold with boundary \(S^3\). Let $x \in H_2(N, \partial N)$ be a nonzero class of divisibility $d$, and 
suppose \(K\subset S^3\) is a knot such that~\(H_1(\Sigma_d(K))=0\). 
The following are equivalent:
\begin{itemize}
\item The knot $K$ is sliced by a simple disc in $N$ representing $x$.
\item
\begin{enumerate}
\item \label{item:1Main} $\operatorname{Arf}(K)+\ks(N)+\frac{1}{8}(\sigma(N)-x \cdot x) \equiv 0 \ \text{ mod } 2$ \ if $x$ is characteristic, and
\item\label{item:2Main}  $b_2(N)\geq \max_{0\leq j<d}|\sigma (N)-\frac{2j(d-j)}{d^2}\, x\cdot x +\sigma_K (e^{\frac{2\pi i j}{d}})|$.
\end{enumerate}
\end{itemize}
\end{customthm}
\begin{proof}
The necessity of the conditions is a consequence of Theorem~\ref{thm:StableWithBoundaryIntro} and Proposition~\ref{prop:SignatureCondition}.
We turn to the converse.
Theorem~\ref{thm:StableWithBoundaryIntro} produces an embedded disc~$D \subset N\#^k S^2 \times S^2$ with boundary~$K$ that represents the class $x \oplus 0$. 
Proposition~\ref{prop:TopSplitting} (which applies thanks to Propositions~\ref{prop:H2Free}, \ref{prop:SplitDownstairs} and~\ref{prop:EvenessCondition}) then ensures that $D$ can be upgraded to a disc in~$N$ with boundary~$K$ and representing~$x$.
\end{proof}

\appendix
\section{Intersection forms and transfers}

The goal of this section is to prove an equality that is implicit in~\cite[Proposition~3.7]{LWCommentarii}.
As we will see,  this result (namely Proposition~\ref{prop:LWIntersectionEquality}) follows fairly readily from properties of transfer homomorphisms.
However, since the literature on transfers is difficult to navigate, we instead decided to do things by hand.

\begin{notation}
We fix the following notation.
Set $G:=\Z_d$ with $d \neq 0$,  let $X$ be a simply-connected $4$-manifold whose boundary is empty or $S^3$,  let $F \subset X$ be a properly embedded surface with knot group~$\pi_1(X_F) \cong G$, and let $p \colon \Sigma(F) \to X$ be the corresponding branched cover.
Here we assume that $F$ has a boundary if and only if $X$ has a boundary.
\end{notation}

\begin{lemma}
\label{lem:TransferFundamentalClass}
The following equality holds:
$$p_*([\Sigma(F)])=|G|\cdot[X] \in H_4(X,\partial X).$$
\end{lemma}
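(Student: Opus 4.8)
The plan is to identify $p_*([\Sigma(F)])$ with $\deg(p)\cdot[X]$ and then compute the degree by a signed count of the preimages of a regular value chosen off the branch locus. First I would observe that $X$ and $\Sigma(F)$ are connected, compact, oriented $4$-manifolds whose boundaries are empty or closed $3$-manifolds, so both $H_4(X,\partial X)$ and $H_4(\Sigma(F),\partial\Sigma(F))$ are infinite cyclic, generated by the respective fundamental classes. Here $\Sigma(F)$ is connected because $X_F^d \to X_F$ is the cover associated to the surjection $\pi_1(X_F)\cong G \onto G$, hence connected, and $\Sigma(F)$ inherits an orientation from $X$ via $p$ over the complement of the branch locus. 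Consequently $p_*([\Sigma(F)])=n\cdot[X]$ for a unique integer $n$, the degree of $p$, and the problem reduces to showing $n=|G|=d$.

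Next I would compute $n$ as a local degree. Choose a point $y$ in the interior of $X$ lying in $X_F$, i.e.\ off a tubular neighbourhood of the branch surface. Over $X_F$ the branched cover restricts to the genuine $d$-fold covering $X_F^d \to X_F$, so $p^{-1}(y)=\{\tilde y_1,\dots,\tilde y_d\}$ consists of exactly $d$ points, near each of which $p$ is an orientation-preserving local homeomorphism. I would then run the degree argument through the commutative square
$$
\begin{array}{ccc}
H_4(\Sigma(F),\partial\Sigma(F)) & \longrightarrow & H_4(\Sigma(F),\Sigma(F)\setminus p^{-1}(y)) \\
\big\downarrow\, p_* & & \big\downarrow\, p_* \\
H_4(X,\partial X) & \longrightarrow & H_4(X,X\setminus\{y\}),
\end{array}
$$
whose horizontal maps restrict a fundamental class to local orientation data. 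By excision $H_4(\Sigma(F),\Sigma(F)\setminus p^{-1}(y))\cong\bigoplus_{i=1}^d\Z$, and $[\Sigma(F)]$ restricts to the sum of the $d$ local orientation generators. Each of these is carried by $p_*$ to the local orientation generator of $H_4(X,X\setminus\{y\})\cong\Z$ with sign $+1$, because $p$ is orientation-preserving near each $\tilde y_i$. Chasing $[\Sigma(F)]$ around the square then forces $n=d$, as desired.

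The main obstacle is the orientation bookkeeping that makes every one of the $d$ local degrees equal to $+1$, ruling out cancellation. This is precisely the content of orienting $\Sigma(F)$ as a branched cover: its orientation is the pullback of that of $X$ along the local homeomorphism $p$ on the dense open set $X_F^d$, extended across the codimension-two branch locus $\widetilde F$ (which is unobstructed, since removing $\widetilde F$ affects neither connectedness nor $H_4$). With this orientation the restriction $p|_{X_F^d}$ is orientation-preserving by construction, so each local homeomorphism at $\tilde y_i$ has local degree $+1$. Once this is in place the computation is routine, and the bounded case requires no extra work: the regular value $y$ is taken in the interior, and $p(\partial\Sigma(F))\subset\partial X$ guarantees that $p_*$ on relative $H_4$ is defined.
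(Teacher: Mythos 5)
Your proof is correct, but it takes a genuinely different route from the paper's. The paper argues in two steps: it first reduces the bounded case to the closed case by doubling $(X,F)$ to a closed pair $(V,S)$ and comparing top-dimensional homology of $(\Sigma(S),\Sigma(F))$ and $(V,X)$ via excision; it then proves the closed case by excising a tubular neighbourhood of the branch locus, so that the claim becomes the statement that the unbranched regular $G$-cover $\widetilde{X}_F\to X_F$ has degree $|G|$ on top-dimensional relative homology, which is quoted as a known fact. You instead compute $\deg(p)$ in one stroke, as a sum of local degrees at a regular value $y$ in the interior of $X_F$: there are exactly $|G|$ preimages, at each of which $p$ is an orientation-preserving local homeomorphism, so every local degree is $+1$. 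Your route is more elementary and uniform --- no doubling is needed, the closed and bounded cases are handled by the same diagram, and the degree-$|G|$ fact for unbranched covers is proved rather than cited --- at the price of invoking the local-homology characterisation of the relative fundamental class (that $[\Sigma(F)]$ restricts to the sum of the local orientation classes at the points of $p^{-1}(y)$), which is standard and extends to manifolds with boundary exactly as you describe, since $y$ and its preimages are interior points.

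One small quibble: your justification for extending the pulled-back orientation across $\widetilde{F}$ (``removing $\widetilde{F}$ affects neither connectedness nor $H_4$'') is not the right reason. The honest argument is that $\pi_1(X_F^d)\to\pi_1(\Sigma(F))$ is surjective because $\widetilde{F}$ has codimension two, so $w_1(\Sigma(F))$ vanishes since its restriction $p^*w_1(X)$ to $X_F^d$ does; alternatively, in the setting of the paper's appendix this is moot, as $X$ being simply-connected with $\pi_1(X_F)\cong G$ forces $\Sigma(F)$ to be simply-connected, hence orientable, and one then chooses the orientation agreeing with the pullback orientation on the connected open set $X_F^d$. This does not affect the validity of your argument.
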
 
\begin{proof}
We claim the lemma reduces to the closed case.
Double the pair~$(X,F)$ to obtain~$(V,S)$.
As~$X$ is simply-connected and~$\pi_1(X_F)\cong G$, we obtain that~$V$ is simply-connected and~$\pi_1(V_S)\cong G$.
In particular the branched cover~$\Sigma(S)$ is simply-connected, whence~$H_3(\Sigma(S))=0$.
Consider the following commutative diagram:
$$
\xymatrix@R0.4cm{
0 \ar[r]&H_4(\Sigma(S)) \ar[r]^-{\cong}\ar[d]^{p_*}_{\cdot |G|}& {\overbrace{H_4(\Sigma(S),\Sigma(F))}^{\cong H_4(\Sigma(F),\partial\Sigma(F))}}\ar[d]^{p_*}_{} \ar[r]&0 \\
0 \ar[r]&H_4(V) \ar[r]^-\cong& {\underbrace{H_4(V,X)}_{\cong H_4(X,\partial X)}} \ar[r]&0. \\
}
$$
The left map is multiplication by $|G|$ because we are assuming the lemma holds in the closed case.
This concludes the proof of the claim.

We prove the lemma when $F$ and $X$ are closed.
Consider the following commutative diagram:
$$
\xymatrix@R0.4cm{
0 \ar[r]&H_4(\Sigma(F)) \ar[r]^-{\cong}\ar[d]^{p_*}& {\overbrace{H_4(\Sigma(F),\overline{\nu}(F))}^{\cong H_4(\widetilde{X}_F,\partial \widetilde{X}_F)}}\ar[d]^{p_*}_{\cdot |G|} \ar[r]&0 \\
0 \ar[r]&H_4(X) \ar[r]^-\cong& {\underbrace{H_4(X,\overline{\nu}(F))}_{\cong H_4(X_F,\partial X_F)}} \ar[r]&0. \\
}
$$
The right hand side map is multiplication by $|G|$ because (unbranched) regular $G$-covering projections have degree $|G|$.
\end{proof}

For the next statement, recall that $\mathcal{N} \in \Z[G]$ denotes the norm element.

\begin{proposition}
\label{prop:McGyver}
For every $y \in H_2(\Sigma(F))$, the homomorphism~$\tr:=\PD_{\Sigma(F)} \circ p^* \circ \PD_{X}^{-1}$ satisfies
$$\tr \circ p_*(y)=\mathcal{N}y \in H_2(\Sigma(F)).$$
\end{proposition}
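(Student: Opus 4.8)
The plan is to prove the identity after extending scalars to $\C$, which suffices because $H_2(\Sigma(F))$ is free abelian and hence injects into $H_2(\Sigma(F))\otimes_\Z\C$: as $\Sigma(F)$ is a simply-connected compact $4$-manifold with boundary $S^3$ or empty, the long exact sequence of the pair and Lefschetz duality give $H_2(\Sigma(F))\cong H_2(\Sigma(F),\partial\Sigma(F))\cong H^2(\Sigma(F))$, and since $H_1(\Sigma(F))=0$ the universal coefficient theorem identifies the latter with $\Hom(H_2(\Sigma(F)),\Z)$, forcing freeness. Both $\tr\circ p_*$ and multiplication by $\mathcal N$ are integrally defined endomorphisms of $H_2(\Sigma(F))$, so agreement after $\otimes_\Z\C$ yields agreement over $\Z$.

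First I would record the easy half of the transfer formula, $p_*\circ\tr=d\cdot\id_{H_2(X)}$. For $a\in H_2(X)$, set $\gamma:=\PD_X^{-1}(a)$, so that $\tr(a)=p^*\gamma\cap[\Sigma(F)]$. The projection formula together with Lemma~\ref{lem:TransferFundamentalClass} (recall $|G|=d$) then gives
$$p_*(\tr(a))=p_*(p^*\gamma\cap[\Sigma(F)])=\gamma\cap p_*[\Sigma(F)]=\gamma\cap(d\,[X])=d\,\PD_X(\gamma)=d\,a.$$
Next I would set $W:=H_2(\Sigma(F);\C)$ and decompose it into the $\zeta^j$-eigenspaces $W_j$ of the generator $t$ of $G$, where $\zeta:=e^{2\pi i/d}$. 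On $W_j$ the norm $\mathcal N=\sum_{l=0}^{d-1}t^l$ acts as $d$ if $j=0$ and as $0$ otherwise. I then need two structural facts. First, $\tr$ takes values in the invariants $W^G=W_0$: the deck transformations cover $\id_X$ and therefore preserve the orientation, so $g_*[\Sigma(F)]=[\Sigma(F)]$, and using $g^*p^*=(p g)^*=p^*$ one computes $g_*\circ\tr=\tr$. Second, $p_*$ vanishes on $W_j$ for $j\neq0$, since $p\circ t=p$ forces $\zeta^j p_*x=p_*(tx)=p_*x$ for $x\in W_j$. These two facts give $\tr\circ p_*=0=\mathcal N$ on every $W_j$ with $j\neq0$.

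It remains to treat $W_0$, and this is where the main obstacle sits. Here I would use that $X$ is the quotient $\Sigma(F)/G$ by the finite group $G$, so that rationally the orbit map induces an isomorphism onto the coinvariants $H_2(\Sigma(F);\Q)_G\xrightarrow{\cong}H_2(X;\Q)$; since over $\Q$ the norm identifies invariants with coinvariants, $p_*$ restricts to an isomorphism $W_0\xrightarrow{\cong}H_2(X;\C)$, and in particular is injective on $W_0$. Granting this, put $\phi:=\tr\circ p_*-d\cdot\id$, which preserves $W_0$ by the first structural fact. Using the easy half,
$$p_*\circ\phi|_{W_0}=(p_*\tr)\,p_*|_{W_0}-d\,p_*|_{W_0}=d\,p_*|_{W_0}-d\,p_*|_{W_0}=0,$$
so injectivity of $p_*|_{W_0}$ forces $\phi|_{W_0}=0$, i.e.\ $\tr\circ p_*=d\cdot\id=\mathcal N$ on $W_0$. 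Combining the two cases proves $\tr\circ p_*=\mathcal N$ on $W$, hence integrally.

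The hard part will be precisely the $W_0$ step: justifying that the Poincaré-dual map $\tr$ lands in the invariants (which rests on orientation-preservation of the deck group) and, more importantly, the rational quotient isomorphism $H_2(X;\Q)\cong H_2(\Sigma(F);\Q)^G$ used to get injectivity of $p_*|_{W_0}$. This identification is the elementary, ``by hand'' substitute for matching $\tr$ with the topological transfer of the orbit map, and it is the only place where the branched (as opposed to free) nature of the covering must be confronted; note it is self-contained and does not rely on the intersection-form equality of Proposition~\ref{prop:LWIntersectionEquality}, which will instead be deduced from the present proposition.
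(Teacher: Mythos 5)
Your proof is correct and is essentially the paper's own argument: both rest on the same three ingredients, namely the projection formula together with Lemma~\ref{lem:TransferFundamentalClass} to get $p_*\circ\tr = d\cdot\id_{H_2(X)}$, orientation-preservation of the deck transformations to place the image of $\tr$ in the invariants, and the rational isomorphism $p_*\colon H_2(\Sigma(F);\Q)^G \to H_2(X;\Q)$ (Bredon) to conclude by injectivity on invariants; your eigenspace decomposition merely repackages the paper's direct comparison of the two invariant classes $\mathcal{N}y$ and $\tr\circ p_*(y)$. One small slip in your preamble: $\partial\Sigma(F)$ is the branched cover $\Sigma_d(K)$ of $S^3$, not $S^3$ itself, so the chain $H_2(\Sigma(F))\cong H_2(\Sigma(F),\partial\Sigma(F))\cong H^2(\Sigma(F))$ requires $H_1(\partial\Sigma(F))=0$, which is not assumed in the appendix's generality. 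Freeness of $H_2(\Sigma(F))$ still holds, but via the observation that its torsion injects into $\Ext(H_2(\Sigma(F)),\Z)\subset H^3(\Sigma(F))\cong H_1(\Sigma(F),\partial\Sigma(F))=0$, valid for any simply-connected compact $4$-manifold with connected boundary.
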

\begin{proof}
We use
Lemma~\ref{lem:TransferFundamentalClass} to verify that~$p_* \circ \tr(\sigma)=|G|\sigma$:
\begin{align*}
p_* \circ \tr(\sigma)
&=p_*(p^*(\PD_X^{-1}(\sigma)) \cap [\Sigma(F)]) 
=\PD_X^{-1}(\sigma) \cap p_*([\Sigma(F)]) 
=\PD_X^{-1}(\sigma) \cap |G|[X] 
=|G|\sigma.
\end{align*}
Next, we observe that given $\sigma \in H_2(X),$ every $g \in G$ fixes $\tr(\sigma)$.
Indeed since deck transformations are orientation-preserving homeomorphisms,
 they are degree one and so
\begin{align*}
g_* \circ \tr(\sigma)
&=g_*(g^*g^{-*}p^*(\PD_X^{-1}(\sigma)) \cap [\Sigma(F)]) 
=g^{-*}p^*(\PD_X^{-1}(\sigma)) \cap g_*([\Sigma(F)]) \\
&=p^*(\PD_X^{-1}(\sigma)) \cap [\Sigma(F)] 
=\tr(\sigma).
\end{align*}
Since $H_2(\Sigma(F))$ is free abelian, it suffices to prove that $\mathcal{N}y=\tr \circ p_*(y) \in H_2(\Sigma(F);\Q)$.
Both elements lie in $H_2(\Sigma(F);\Q)^G$.
With rational coefficients, the projection induces an isomorphism~$p_* \colon H_2(\Sigma(F);\Q)^G \to H_2(X;\Q)$~\cite[Chapter III Theorem 7.4]{BredonTopology}.
Since we calculated that~$p_*(\mathcal{N}y)=|G|p_*(y)=p_* \circ \tr \circ p_*(y)$, we obtain~$\mathcal{N}y=\tr \circ p_*(y)$, as claimed. 
\end{proof}

\begin{proposition}
\label{prop:LWIntersectionEquality}
The following equality holds for every~$x,y\in H_2(\Sigma(F))$:
$$\sum_{g \in G} Q_{\Sigma(F)}(x,gy)= Q_{X}(p_*(x),p_*(y)).$$
\end{proposition}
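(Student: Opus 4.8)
The plan is to reduce the left-hand side to a single intersection number on $\Sigma(F)$ and then transport it to $X$ via the transfer homomorphism. Since the deck group $G$ acts on $H_2(\Sigma(F))$ by isometries of the symmetric form $Q_{\Sigma(F)}$, and $Q_{\Sigma(F)}$ is $\Z$-bilinear, I would first collect the sum into the second variable:
$$\sum_{g \in G} Q_{\Sigma(F)}(x, gy) = Q_{\Sigma(F)}\Big(x, \sum_{g \in G} gy\Big) = Q_{\Sigma(F)}(x, \mathcal{N}y).$$
Proposition~\ref{prop:McGyver} identifies the norm operator on $H_2(\Sigma(F))$ with the composite $\tr \circ p_*$, so $\mathcal{N}y = \tr(p_* y)$, and the task collapses to proving the adjunction
$$Q_{\Sigma(F)}(x, \tr(w)) = Q_X(p_* x, w) \qquad \text{for all } w \in H_2(X),$$
applied with $w = p_* y$. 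In words, the remaining content is that the transfer $\tr = \PD_{\Sigma(F)} \circ p^* \circ \PD_X^{-1}$ is adjoint to the covering pushforward $p_*$ with respect to the two intersection forms.

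To establish the adjunction I would unwind the definition of $\tr$. Writing $\xi := \PD_X^{-1}(w) \in H^2(X, \partial X)$, so that $w = \xi \cap [X]$, the definition gives $\tr(w) = p^*\xi \cap [\Sigma(F)] = \PD_{\Sigma(F)}(p^*\xi)$, where I use that $p$ is a map of pairs $(\Sigma(F), \partial\Sigma(F)) \to (X, \partial X)$, so $p^*\xi \in H^2(\Sigma(F), \partial\Sigma(F))$. Hence $\PD_{\Sigma(F)}^{-1}(\tr(w)) = p^*\xi$. Using the description of the intersection form $Q_{\Sigma(F)}(a,b) = \langle j^*\PD_{\Sigma(F)}^{-1}(b), a\rangle$, where $j^*$ denotes the restriction $H^2(\Sigma(F), \partial\Sigma(F)) \to H^2(\Sigma(F))$ and the bracket is the Kronecker pairing, this yields $Q_{\Sigma(F)}(x, \tr(w)) = \langle j^* p^*\xi, x\rangle$. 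Since restriction commutes with $p^*$ and the Kronecker pairing is natural, $\langle p^*(j^*\xi), x\rangle = \langle j^*\xi, p_* x\rangle$, and the right-hand side is exactly $Q_X(p_* x, w)$ by the same description of the intersection form on $X$. Combining with the reduction above gives the claim.

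The conceptual content is supplied entirely by Proposition~\ref{prop:McGyver} (which in turn rests on Lemma~\ref{lem:TransferFundamentalClass}); the only delicate point left is the bookkeeping of relative versus absolute classes. Specifically, I would write out most carefully that $p^*$ genuinely lands in $H^2(\Sigma(F), \partial\Sigma(F))$, that the restriction $j^*$ commutes with $p^*$ for the map of pairs, and that the cap--cup adjunction $\langle \alpha, \beta \cap \mu\rangle = \langle \alpha \cup \beta, \mu\rangle$ is invoked in the correct relative groups; everything else is formal.
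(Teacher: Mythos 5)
Your proof is correct and takes essentially the same route as the paper: both arguments reduce the sum to $Q_{\Sigma(F)}(x,\mathcal{N}y)$ by bilinearity, invoke Proposition~\ref{prop:McGyver} to write $\mathcal{N}y=\tr(p_*(y))$, and then unwind $\tr=\PD_{\Sigma(F)}\circ p^*\circ \PD_X^{-1}$ using the compatibility of Poincar\'e duality with the restriction maps and the naturality of the Kronecker pairing. The only cosmetic difference is that you isolate the final step as a general adjunction $Q_{\Sigma(F)}(x,\tr(w))=Q_X(p_*(x),w)$, whereas the paper carries out the identical chain of identities inline with $w=p_*(y)$.
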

\begin{proof}
Consecutively use Proposition~\ref{prop:McGyver}, the definition of  the intersection form and the properties of Poincar\'e duality to obtain
\begin{align*}
\sum_{g \in G} Q_{\Sigma(F)}(x,gy)
&=Q_{\Sigma(F)}(x,\mathcal{N}y)
=Q_{\Sigma(F)}(x,\tr \circ p_*(y)) 
=Q_{\Sigma(F)}(x, \PD_{\Sigma(F)} \circ p^* \circ \PD_{X}^{-1} \circ p_*(y)) \\
&=\langle  \PD_{\Sigma(F)}^{-1} \circ j_*(\PD_{\Sigma(F)}  \circ p^* \circ \PD_{X}^{-1} \circ p_*(y)),x\rangle 
=\langle  j^* \circ p^* \circ \PD_{X}^{-1} \circ p_*(y),x\rangle \\
&=\langle  p^* \circ \PD_{X}^{-1} \circ j_* \circ p_*(y),x\rangle 
=\langle  \PD_{X}^{-1} \circ j_*\c(p_*(y)),p_*(x)\rangle 
=Q_X(p_*(x),p_*(y)).
\end{align*}
The sixth equality is best summarised by the commutativity of the following diagram:
$$
\xymatrix@R0.5cm{
H_2(X) \ar[r]^-{\PD_X^{-1}}\ar[d]^-{j_*}&H^2(X,\partial X) \ar[r]^-{p^*}\ar[d]^{j^*}& H^2(\Sigma,\partial \Sigma)\ar[d]^-{j^*} \\
H_2(X,\partial X) \ar[r]^-{\PD_X^{-1}}&H^2(X) \ar[r]^-{p^*}& H^2(\Sigma).
}
$$
This concludes the proof of the proposition.
\end{proof}

\section{A relative version of Bredon-Edmonds' result}

In this section, we state and prove a version of Edmonds' result~\cite[Proposition 5.1]{EdmondsAspects}. Edmonds' argument is made in the absolute case and we need a version in the case of manifolds with boundary and properly embedded surfaces.

The Edmonds result crucially uses a result of Bredon~\cite[Theorem~VII.7.4]{BredonIntroduction}, which is also only stated in an absolute version. Bredon's result has the following relative variant. The lengthy, but ultimately formal, exercise of adjusting Bredon's proof to the relative case will not be included here, and is left to the reader.

\begin{theorem}[Bredon]
\label{thm:Bredonmainrelative}
Let~$T$ be an involution on a paracompact space~$X$ and write $F$ for the fixed point set. Let~$A\subset X$ be a subspace such that $T(A)\subset A$.
Suppose~$H^i(X,A;\Z_2)=0$ for~$i>2n$ and~$T^*$ is the identity on~$H^{2n}(X,A;\Z_2)$. 
If~$a\in H^n(X,A;\Z_2)$ is such that~$a\cup T^* a\in H^{2n}(X,A;\Z_2)$ is non-zero, then~$k^*(a)\in H^n(F,F\cap A;\Z_2)$ is non-zero,  where $k \colon F \to X$ denotes the inclusion.
\end{theorem}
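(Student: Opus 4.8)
The plan is to adapt Bredon's Borel-construction argument to the relative setting, the guiding principle being that every ingredient of the absolute proof — the equivariant power operation, the Leray--Serre spectral sequence of the Borel fibration, and the localization theorem of Smith theory — has a relative analogue as soon as the subspace $A$ is invariant under the involution. I work throughout with $\Z_2$ coefficients and write $G=\Z_2=\langle T\rangle$ and $R:=H^*(BG;\Z_2)=\Z_2[u]$ with $|u|=1$. Since $T(A)\subseteq A$, the pair $(X,A)$ is a $G$-pair, so its Borel construction $(X_G,A_G):=(EG\times_G X,\,EG\times_G A)$ is defined and $H^*_G(X,A):=H^*(X_G,A_G;\Z_2)$ is an $R$-module via the projection to $BG$; the fixed set of the pair is exactly $(F,F\cap A)$.

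First I would manufacture a distinguished equivariant class lifting $a\cup T^*a$. The map $\phi\colon X\to X\times X$, $\phi(x)=(x,Tx)$, is $G$-equivariant when $G$ acts by $T$ on the source and by the swap on the target; it carries $A$ into $A\times X\cup X\times A$ and carries $F$ into the diagonal $\Delta\cong X$, restricting there to the inclusion $k\colon F\hookrightarrow X$. On cohomology $\phi^*(a\times a)=a\cup T^*a$. Applying the Steenrod squaring (power) construction to the swap-invariant external square $a\times a$ and pulling back along the Borel construction $\phi_G$ produces a class $\Phi\in H^{2n}_G(X,A)$ whose restriction to a fibre of $X_G\to BG$ is $\phi^*(a\times a)=a\cup T^*a$.

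The heart of the argument is to show that $\Phi$ is not $u$-torsion, and here I would run the spectral sequence $E_2^{p,q}=H^p(G;H^q(X,A;\Z_2))\Rightarrow H^{p+q}_G(X,A)$, whose coefficients record the $T^*$-action and which is a spectral sequence of $R$-modules. The top row is $q=2n$: it receives no differentials, since any source would sit in fibre degree $>2n$, where $H^\bullet(X,A)=0$ by hypothesis. Because $\Phi$ restricts to $a\cup T^*a\neq 0$ on the fibre, the class $a\cup T^*a$ is a permanent cycle (it is $T^*$-invariant, as $T^*(a\cup T^*a)=T^*a\cup a=a\cup T^*a$ over $\Z_2$, and it lies in the image of restriction). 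As the differentials are $u$-linear, every multiple $u^{p}\cdot(a\cup T^*a)$ is again a permanent cycle, and since $T^*$ acts trivially on $H^{2n}(X,A)$ we have $E_2^{p,2n}\cong H^{2n}(X,A)\neq 0$ with $u^{p}\cdot(a\cup T^*a)\neq 0$ in it. Thus $u^{p}\Phi\neq 0$ for all $p\ge 0$, so $\Phi$ generates a free $R$-submodule. The relative localization theorem (the relative form of the machinery in \cite{BredonIntroduction}, valid because $X\setminus F$ carries a free $G$-action and hence has $u$-torsion relative Borel cohomology) says that $\rho\colon H^*_G(X,A)\to H^*_G(F,F\cap A)=H^*(F,F\cap A)\otimes_{\Z_2}R$ is an isomorphism after inverting $u$; being non-$u$-torsion, $\Phi$ therefore has $\rho(\Phi)\neq 0$.

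Finally I would identify $\rho(\Phi)$. Since $\phi$ sends $F$ into the diagonal and restricts there to $k$, the restriction of the power operation to the fixed set is the total Steenrod square,
\[
\rho(\Phi)=\sum_{i=0}^{n} u^{\,n-i}\,\mathrm{Sq}^i(k^*a),
\]
whose $i=0$ term is $u^{n}\,k^*(a)$. If $k^*(a)=0$ then $\mathrm{Sq}^i(k^*a)=0$ for all $i$ and $\rho(\Phi)=0$, contradicting the previous step; hence $k^*(a)\neq 0$, as claimed. The \textbf{main obstacle} I anticipate is not any new geometric input but the careful bookkeeping of the relative machinery: establishing the relative localization theorem and verifying that the equivariant power operation together with its fixed-point restriction formula go through verbatim for the invariant pair $(X,A)$. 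This is precisely the ``lengthy but formal'' relativization of Bredon's proof alluded to above, and it is where I would spend the bulk of the effort.
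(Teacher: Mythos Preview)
The paper does not actually prove this statement: it explicitly declines to, writing that ``the lengthy, but ultimately formal, exercise of adjusting Bredon's proof to the relative case will not be included here, and is left to the reader.'' Your proposal is precisely a sketch of that adjustment, following Bredon's original argument (Borel construction, Steenrod power class, the Leray--Serre spectral sequence with the top row $q=2n$ protected by the vanishing hypothesis $H^{>2n}(X,A;\Z_2)=0$ and the triviality of $T^*$ there, localization to the fixed set, and the identification of the restriction via total Steenrod square). The outline is correct and is exactly the route the paper has in mind; you have simply carried out what the authors chose to omit. Your own caveat is apt: the only real work is checking that the localization theorem and the power-operation restriction formula go through for an invariant pair, which is the ``lengthy but formal'' part.
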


We now recall Edmonds' argument, with the minor adjustments for the relative case.

\begin{proposition}[Edmonds]
\label{prop:BredonRel}
Let~$T$ be an orientation-preserving involution on a connected orientable~$2n$-manifold~$X$ with non-empty boundary $A$. Suppose the fixed-point set is a proper submanifold with boundary~$(F,\partial F)\subset (X,A)$. Suppose there exists~$a\in H^n(X,A;\Z_2)$ such that~$a\cup T^* a\in H^{2n}(X,A;\Z_2)$ is non-zero. Then
\[
\langle c\cup T^*c,[X]\rangle\equiv \langle k^*(c),[F]\rangle\mod 2
\]
for all~$c\in H^n(X,A;\Z_2)$, where~$k\colon F\to X$ denotes the inclusion of the fixed-point set of~$T$.
\end{proposition}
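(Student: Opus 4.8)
The plan is to reproduce Edmonds' argument essentially verbatim, with the single change that the absolute Bredon theorem is replaced by its relative version, Theorem~\ref{thm:Bredonmainrelative}. Abbreviate $V:=H^n(X,A;\Z_2)$ and define two maps $\phi,\psi\colon V\to\Z_2$ by $\phi(c)=\langle c\cup T^*c,[X]\rangle$ and $\psi(c)=\langle k^*c,[F]\rangle$; the assertion is exactly that $\phi=\psi$. First I would record that both maps are $\Z_2$-\emph{linear}. For $\psi$ this is immediate. For $\phi$, expanding $\phi(c+c')$ leaves the cross term $\langle c\cup T^*c'+c'\cup T^*c,[X]\rangle$; since $T$ is orientation preserving we have $T_*[X]=[X]$, hence $\langle T^*u,[X]\rangle=\langle u,[X]\rangle$, and combining this with $(T^2)^*=\operatorname{id}$ and the mod~$2$ graded-commutativity of the cup product shows $\langle c'\cup T^*c,[X]\rangle=\langle c\cup T^*c',[X]\rangle$, so the cross term vanishes mod~$2$ and $\phi$ is linear.

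Next I would reinterpret the two hypotheses cohomologically. Using $H^{2n}(X,A;\Z_2)\cong\Z_2$ together with the fact that $\langle-,[X]\rangle$ is an isomorphism on it, the assumption $a\cup T^*a\neq 0$ becomes the statement $\phi(a)=1$. For the key implication, observe that for \emph{any} $c\in V$ with $\phi(c)=1$, i.e. with $c\cup T^*c\neq 0$, Theorem~\ref{thm:Bredonmainrelative} applies and yields $k^*(c)\neq 0$ in $H^n(F,F\cap A;\Z_2)$. Since the fixed set $F$ is connected (in the situation of Lemma~\ref{lem:Edmonds} it is the branch disc $\widetilde{D}$), relative Poincar\'e--Lefschetz duality gives $H^n(F,\partial F;\Z_2)\cong\Z_2$ with $\langle-,[F]\rangle$ an isomorphism, so $k^*(c)\neq 0$ forces $\psi(c)=\langle k^*c,[F]\rangle=1$. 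This establishes the crucial implication $\phi(c)=1\Rightarrow\psi(c)=1$.

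Finally I would finish with a short linear-algebra step over $\Z_2$. Given $c$ with $\phi(c)=1$, the implication above gives $\psi(c)=1=\phi(c)$; given $c$ with $\phi(c)=0$, linearity yields $\phi(a+c)=\phi(a)+\phi(c)=1$, whence $\psi(a+c)=1$ and, since $\phi(a)=1$ also gives $\psi(a)=1$, we obtain $\psi(c)=\psi(a+c)+\psi(a)=0=\phi(c)$. Hence $\phi=\psi$, which is exactly the claimed congruence. The routine but somewhat delicate part is the relative duality bookkeeping for the manifold pair $(X,A)$ and the properly embedded $(F,\partial F)$; the genuine crux is the passage from $k^*(c)\neq 0$ to $\langle k^*c,[F]\rangle\neq 0$, which is where connectedness of the fixed-point set enters and where the relative form of Bredon's theorem is doing the real work.
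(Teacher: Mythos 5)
Your proof is correct and takes essentially the same route as the paper: both rest on the relative Bredon theorem (Theorem~\ref{thm:Bredonmainrelative}) to obtain the key implication $\langle c\cup T^*c,[X]\rangle=1 \Rightarrow \langle k^*(c),[F]\rangle=1$, and then conclude by elementary $\Z_2$-algebra --- the paper via a counting argument on kernels of the two surjections to $\Z_2$, you via making the $\Z_2$-linearity of both functionals explicit and cancelling against the class $a$ (the paper's counting argument implicitly uses this same linearity). Your appeal to connectedness of $F$, needed to pass from $k^*(c)\neq 0$ to $\langle k^*(c),[F]\rangle\neq 0$, imports an assumption not stated in the proposition, but the paper's proof makes the identical silent step, so this is a shared caveat rather than a gap relative to the paper.
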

\begin{proof}
Define two functions
\[
\begin{array}{rcll}
\varphi\colon H^n(X,A;\Z_2)&\to&\Z_2; \qquad\qquad&\varphi(c)=\langle c\cup T^* c,[X]\rangle\\
\psi\colon H^n(X,A;\Z_2)&\to&\Z_2; &\psi(c)=\langle k^*(c),[F]\rangle.
\end{array}
\]
As~$T$ is orientation-preserving, it acts trivially on~$H^{2n}(X,A;\Z_2)$ and we may apply Theorem~\ref{thm:Bredonmainrelative}: if~$\varphi(c)\neq0$, then~$\psi(c)\neq 0$. 
Now, by hypothesis,~$a\cup T^* a\neq 0$, and so by Poincar\'{e} duality~$\varphi$ is not the trivial map. 
Hence~$\psi$ is not the trivial map. 
The kernel of a surjective map to~$\Z_2$ has index 2, hence both maps vanish on exactly half the elements of~$H^n(X,A;\Z_2)$. 
But we already know that the maps agree on at least half the elements (the ones where~$\varphi(c)=1$). 
Hence they must agree on all the elements.
\end{proof}

\bibliographystyle{myamsalpha}
\bibliography{BiblioLW}

\newcommand{\etalchar}[1]{$^{#1}$}
\providecommand{\bysame}{\leavevmode\hbox to3em{\hrulefill}\thinspace}
\providecommand{\MR}{\relax\ifhmode\unskip\space\fi MR }
\providecommand{\MRhref}[2]{%
  \href{http://www.ams.org/mathscinet-getitem?mr=#1}{#2}
}
\providecommand{\href}[2]{#2}
\begin{thebibliography}{MMSW23}

\bibitem[BKK{\etalchar{+}}21]{DETBook}
S.~Behrens, B.~Kalm\'ar, M.~H. Kim, M.~Powell, and A.~Ray, \emph{The disc
  embedding theorem}, Oxford University Press, 2021.

\bibitem[Bre72]{BredonIntroduction}
G.~Bredon, \emph{Introduction to compact transformation groups}, Pure and
  Applied Mathematics, Vol. 46, Academic Press, New York-London, 1972.
  \MR{0413144}

\bibitem[Bre93]{BredonTopology}
G.~Bredon, \emph{Topology and geometry}, Graduate Texts in Mathematics, vol.
  139, Springer-Verlag, New York, 1993. \MR{1224675}

\bibitem[Bro72]{BrowderSurgery}
W.~Browder, \emph{Surgery on simply-connected manifolds}, Ergebnisse der
  Mathematik und ihrer Grenzgebiete [Results in Mathematics and Related Areas],
  Band 65, Springer-Verlag, New York-Heidelberg, 1972. \MR{358813}

\bibitem[Bro82]{BrownCohomology}
K.~Brown, \emph{Cohomology of {{Groups}}}, Graduate {{Texts}} in
  {{Mathematics}}, vol.~87, {Springer New York}, {New York, NY}, 1982.

\bibitem[CC18]{CimasoniConway}
D.~Cimasoni and A.~Conway, \emph{Coloured tangles and signatures}, Math. Proc.
  Camb. Philos. Soc. \textbf{164} (2018), no.~3, 493--530 (English).

\bibitem[CG78]{CassonGordonOnSliceKnots}
A.~Casson and C.~Gordon, \emph{On slice knots in dimension three}, Algebraic
  and geometric topology ({P}roc. {S}ympos. {P}ure {M}ath., {S}tanford {U}niv.,
  {S}tanford, {C}alif., 1976), {P}art 2, Proc. Sympos. Pure Math., XXXII, Amer.
  Math. Soc., Providence, R.I., 1978, pp.~39--53. \MR{520521}

\bibitem[CN20]{ConwayNagel}
A.~Conway and M.~Nagel, \emph{Stably slice disks of links}, J. Topol.
  \textbf{13} (2020), no.~3, 1261--1301. \MR{4125756}

\bibitem[Con21]{ConwaySurvey}
A.~Conway, \emph{The {L}evine-{T}ristram signature: a survey}, 2019--20
  {MATRIX} annals, MATRIX Book Ser., vol.~4, Springer, Cham, [2021] \copyright
  2021, pp.~31--56. \MR{4294761}

\bibitem[CP23]{ConwayPowell}
A.~Conway and M.~Powell, \emph{Embedded surfaces with infinite cyclic knot
  group}, Geom. Topol. \textbf{27} (2023), no.~2, 739--821.

\bibitem[CPP25]{ConwayPiccirilloPowell}
A.~Conway, L.~Piccirillo, and M.~Powell, \emph{{$4$}-manifolds with boundary
  and fundamental group {$\Bbb{Z}$}}, Comment. Math. Helv. \textbf{100} (2025),
  no.~2, 323--420. \MR{4888080}

\bibitem[Edm89]{EdmondsAspects}
A.~Edmonds, \emph{Aspects of group actions on four-manifolds}, Topology Appl.
  \textbf{31} (1989), no.~2, 109--124. \MR{994404}

\bibitem[FK78]{FreedmanKirby}
M.~Freedman and R.~Kirby, \emph{A geometric proof of {R}ochlin's theorem},
  Algebraic and geometric topology ({P}roc. {S}ympos. {P}ure {M}ath.,
  {S}tanford {U}niv., {S}tanford, {C}alif., 1976), {P}art 2, Proc. Sympos. Pure
  Math., vol. XXXII, Amer. Math. Soc., Providence, RI, 1978, pp.~85--97.
  \MR{520525}

\bibitem[FL24]{FellerLewarkBalanced}
P.~Feller and L.~Lewark, \emph{Balanced algebraic unknotting, linking forms,
  and surfaces in three- and four-space}, J. Differential Geom. \textbf{127}
  (2024), no.~1, 213--275. \MR{4753502}

\bibitem[FM12]{FarbMargalit}
B.~Farb and D.~Margalit, \emph{A primer on mapping class groups}, Princeton
  Mathematical Series, vol.~49, Princeton University Press, Princeton, NJ,
  2012. \MR{2850125}

\bibitem[FNOP19]{FriedlNagelOrsonPowell}
S.~Friedl, M.~Nagel, P.~Orson, and M.~Powell, \emph{A survey of the foundations
  of four-manifold theory in the topological category}, Ar{X}iv 1910.07372
  (2019).

\bibitem[Fox56]{Fox-56-iii}
R.~H. Fox, \emph{Free differential calculus. {III}. {S}ubgroups}, Ann. of Math.
  (2) \textbf{64} (1956), 407--419. \MR{95876}

\bibitem[FQ90]{FreedmanQuinn}
M.~Freedman and F.~Quinn, \emph{Topology of 4-manifolds}, Princeton
  Mathematical Series, vol.~39, Princeton University Press, Princeton, NJ,
  1990. \MR{1201584}

\bibitem[Fre82]{Freedman}
M.~Freedman, \emph{The topology of four-dimensional manifolds}, J. Differential
  Geometry \textbf{17} (1982), no.~3, 357--453. \MR{679066}

\bibitem[FT25]{FukumotoTaniguchi}
Y.~Fukumoto and M.~Taniguchi, \emph{On the connected sums of the $(2,1)$-cable
  of the figure eight knot}, Preprint, {arXiv}:2501.07910 [math.{GT}] (2025),
  2025.

\bibitem[GAn70]{GonzalezAcuna}
F.~Gonz\'{a}lez-Acu\~{n}a, \emph{Dehn's construction on knots}, Bol. Soc. Mat.
  Mexicana (2) \textbf{15} (1970), 58--79. \MR{356022}

\bibitem[Gil81]{GilmerConfiguration}
P.~Gilmer, \emph{Configurations of surfaces in {$4$}-manifolds}, Trans. Amer.
  Math. Soc. \textbf{264} (1981), no.~2, 353--380. \MR{603768}

\bibitem[Goe34]{Goeritz34}
L.~Goeritz, \emph{Die {B}etti'schen {Z}ahlen {D}er {Z}yklischen
  {U}berlagerungsraume {D}er {K}notenaussenraume}, Amer. J. Math. \textbf{56}
  (1934), no.~1-4, 194--198. \MR{1507011}

\bibitem[GS99]{GompfStipsicz}
R.~Gompf and A.~Stipsicz, \emph{{$4$}-manifolds and {K}irby calculus}, Graduate
  Studies in Mathematics, vol.~20, American Mathematical Society, Providence,
  RI, 1999. \MR{1707327}

\bibitem[Hel61]{Heller}
A.~Heller, \emph{Indecomposable representations and the loop-space operation},
  Proc. Amer. Math. Soc. \textbf{12} (1961), 640--643. \MR{126480}

\bibitem[IMT25]{IidaMukherjeeTaniguchi}
N.~Iida, A.~Mukherjee, and M.~Taniguchi, \emph{An adjunction inequality for the
  {B}auer--{F}uruta type invariants, with applications to sliceness and
  4-manifold topology}, Adv. Math. \textbf{466} (2025), Paper No. 110134.
  \MR{4866433}

\bibitem[Jac68]{Jacobinski}
H.~Jacobinski, \emph{Genera and decompositions of lattices over orders}, Acta
  Math. \textbf{121} (1968), 1--29. \MR{251063}

\bibitem[Joh14]{Johnson}
F.~Johnson, \emph{Stably free cancellation for abelian group rings}, Arch.
  Math. (Basel) \textbf{102} (2014), no.~1, 7--10. \MR{3154152}

\bibitem[Klu20]{Klug}
M.~Klug, \emph{A relative version of {Rochlin}'s theorem}, Preprint,
  {arXiv}:2011.12418 [math.{GT}] (2020), 2020.

\bibitem[KMRS24]{KjuchukovaMillerRaySakalli}
A.~Kjuchukova, A.~N. Miller, A.~Ray, and S.~Sakalli, \emph{Slicing knots in
  definite 4-manifolds}, Trans. Amer. Math. Soc. \textbf{377} (2024), no.~8,
  5905--5946. \MR{4771240}

\bibitem[KMT24]{KonnoMiyazawaTaniguchiInvolutions}
H.~Konno, J.~Miyazawa, and M.~Taniguchi, \emph{Involutions, links, and {F}loer
  cohomologies}, J. Topol. \textbf{17} (2024), no.~2, Paper No. e12340, 47.
  \MR{4821360}

\bibitem[KPRT24]{KasprowskiPowellRayTeichnerEmbedding}
D.~Kasprowski, M.~Powell, A.~Ray, and P.~Teichner, \emph{Embedding surfaces in
  4-manifolds}, Geom. Topol. \textbf{28} (2024), no.~5, 2399--2482.
  \MR{4793644}

\bibitem[Lic97]{Lickorish}
W.~B.~R. Lickorish, \emph{An introduction to knot theory}, Graduate Texts in
  Mathematics, vol. 175, Springer-Verlag, New York, 1997. \MR{1472978}

\bibitem[LS75]{LamSiu}
T.~Y. Lam and M.~K. Siu, \emph{{$K\sb{0}$} and {$K\sb{1}$}--an introduction to
  algebraic {$K$}-theory}, Amer. Math. Monthly \textbf{82} (1975), 329--364.
  \MR{399212}

\bibitem[LW90]{LWCommentarii}
R.~Lee and D.~Wilczy\'{n}ski, \emph{Locally flat {$2$}-spheres in simply
  connected {$4$}-manifolds}, Comment. Math. Helv. \textbf{65} (1990), no.~3,
  388--412. \MR{1069816}

\bibitem[LW93]{LWKtheory}
R.~Lee and D.~Wilczy\'{n}ski, \emph{Representing homology classes by locally
  flat {$2$}-spheres}, $K$-Theory \textbf{7} (1993), no.~4, 333--367.
  \MR{1246281}

\bibitem[LW97]{LWGenus}
R.~Lee and D.~Wilczy\'{n}ski, \emph{Representing homology classes by locally
  flat surfaces of minimum genus}, Amer. J. Math. \textbf{119} (1997), no.~5,
  1119--1137. \MR{1473071}

\bibitem[Mil71]{MilnorKTheory}
J.~Milnor, \emph{Introduction to algebraic {$K$}-theory}, Annals of Mathematics
  Studies, No. 72, Princeton University Press, Princeton, NJ; University of
  Tokyo Press, Tokyo, 1971. \MR{349811}

\bibitem[Mil75]{MilnorBrieskorn}
J.~Milnor, \emph{On the {$3$}-dimensional {B}rieskorn manifolds {$M(p,q,r)$}},
  Knots, groups, and {$3$}-manifolds ({P}apers dedicated to the memory of {R}.
  {H}. {F}ox), Ann. of Math. Stud., No. 84, Princeton Univ. Press, Princeton,
  NJ, 1975, pp.~175--225. \MR{418127}

\bibitem[MMP24]{ManolescuMarengonPiccirillo}
C.~Manolescu, M.~Marengon, and L.~Piccirillo, \emph{Relative genus bounds in
  indefinite four-manifolds}, Math. Ann. \textbf{390} (2024), no.~1,
  1481--1506. \MR{4800943}

\bibitem[MMRS24]{MarengonMillerRayStipsicz}
M.~Marengon, A.~N. Miller, A.~Ray, and A.~Stipsicz, \emph{A note on surfaces in
  {$\Bbb{CP}^2$} and {$\Bbb{CP}^2 \# \Bbb{CP}^2$}}, Proc. Amer. Math. Soc. Ser.
  B \textbf{11} (2024), 187--199. \MR{4762681}

\bibitem[MMSW23]{ManolescuMarengonSarkarWillis}
C.~Manolescu, M.~Marengon, S.~Sarkar, and M.~Willis, \emph{A generalization of
  {R}asmussen's invariant, with applications to surfaces in some
  four-manifolds}, Duke Math. J. \textbf{172} (2023), no.~2, 231--311.
  \MR{4541332}

\bibitem[MP23]{ManolescuPiccirillo}
C.~Manolescu and L.~Piccirillo, \emph{From zero surgeries to candidates for
  exotic definite 4-manifolds}, J. Lond. Math. Soc. (2) \textbf{108} (2023),
  no.~5, 2001--2036. \MR{4668522}

\bibitem[OS03]{MR2026543}
P.~Ozsv\'{a}th and Z.~Szab\'{o}, \emph{Knot {F}loer homology and the four-ball
  genus}, Geom. Topol. \textbf{7} (2003), 615--639. \MR{2026543}

\bibitem[Qin24]{Qin}
Q.~Qin, \emph{Slicing degree of knots}, Preprint, {arXiv}:2404.15991
  [math.{GT}] (2024), 2024.

\bibitem[Ren24]{Ren}
Q.~Ren, \emph{Lee filtration structure of torus links}, Geom. Topol.
  \textbf{28} (2024), no.~8, 3935--3960. \MR{4843752}

\bibitem[Roh71]{RoklinTwoDimensional}
V.~A. Rohlin, \emph{Two-dimensional submanifolds of four-dimensional
  manifolds}, Funkcional. Anal. i Prilo\v{z}en. \textbf{5} (1971), no.~1,
  48--60. \MR{298684}

\bibitem[Ros94]{Rosenberg}
J.~Rosenberg, \emph{Algebraic {$K$}-theory and its applications}, Graduate
  Texts in Mathematics, vol. 147, Springer-Verlag, New York, 1994. \MR{1282290}

\bibitem[Sch10]{SchneidermanStable}
R.~Schneiderman, \emph{Stable concordance of knots in 3-manifolds}, Algebr.
  Geom. Topol. \textbf{10} (2010), no.~1, 373--432. \MR{2602841}

\bibitem[Sch25]{SchuetzKJ}
D.~Sch\"utz, \emph{{K}not{J}ob}, software (2025), Available at
  {https://www.maths.dur.ac.uk/users/dirk.schuetz/} {knotjob.html}.

\bibitem[Sco05]{Scorpan}
A.~Scorpan, \emph{The wild world of 4-manifolds}, American Mathematical
  Society, Providence, RI, 2005. \MR{2136212}

\bibitem[Sto94]{StongCorrection105}
R.~Stong, \emph{Existence of {$\pi_1$}-negligible embeddings in
  {$4$}-manifolds. {A} correction to {T}heorem 10.5 of {F}reedmann and
  {Q}uinn}, Proc. Amer. Math. Soc. \textbf{120} (1994), no.~4, 1309--1314.
  \MR{1215031}

\bibitem[Vir73]{ViroBranched}
O.~Viro, \emph{Branched coverings of manifolds with boundary, and invariants of
  links. {I}}, Izv. Akad. Nauk SSSR Ser. Mat. \textbf{37} (1973), 1241--1258.
  \MR{0370605}

\bibitem[Wal79]{WallPeriodicProjective}
C.~T.~C. Wall, \emph{Periodic projective resolutions}, Proc. London Math. Soc.
  (3) \textbf{39} (1979), no.~3, 509--553. \MR{550082}

\bibitem[Wei13]{WeibelKBook}
C.~A. Weibel, \emph{The {$K$}-book}, Graduate Studies in Mathematics, vol. 145,
  American Mathematical Society, Providence, RI, 2013, An introduction to
  algebraic $K$-theory. \MR{3076731}

\bibitem[Wil91]{WilczynskiPeriodic}
D.~Wilczy\'{n}ski, \emph{Periodic maps on simply connected four-manifolds},
  Topology \textbf{30} (1991), no.~1, 55--65. \MR{1081933}

\bibitem[Yas96]{YasuharaConnecting}
A.~Yasuhara, \emph{Connecting lemmas and representing homology classes of
  simply connected {$4$}-manifolds}, Tokyo J. Math. \textbf{19} (1996), no.~1,
  245--261. \MR{1391941}

\end{thebibliography}

\end{document}